\tikzset{commutative diagrams/.cd,arrow style=tikz,diagrams={>=stealth'}}
\newcommand{\RR}[0]{\mathbb{R}}
\newcommand{\HH}[0]{\mathbb{H}}
\newcommand{\ZZ}[0]{\mathbb{Z}}
\newcommand{\R}{\mathbb{R}}
\renewcommand{\l}{\lambda}
\newcommand{\wt}{\widetilde}
\newcommand{\mc}{\mathcal}
\newcommand{\ol}{\overline}
\DeclareMathOperator{\fr}{fr}
\newcommand{\orb}{\mathcal{O}}
\newcommand\tsim{\kern-.4em\sim}
\newcommand\ssm{\smallsetminus}
\newcommand{\tilt}{\mathrm{tilt}}
\newcommand{\spn}{\mathrm{span}}
\renewcommand{\phi}{\varphi}
\renewcommand{\epsilon}{\varepsilon}
\newcommand{\defn}[1]{\textbf{\emph{#1}}}
\numberwithin{equation}{section}
\newtheorem{theorem}[equation]{Theorem}
\newtheorem{thm}{Theorem}
\newtheorem{lemma}[equation]{Lemma}
\newtheorem{proposition}[equation]{Proposition}
\newtheorem{corollary}[equation]{Corollary}
\newtheorem{claim}[equation]{Claim}
\theoremstyle{definition}
\newtheorem{example}[equation]{Example}
\declaretheorem[style=definition,qed=$\lozenge$,sibling=theorem]{remark}
\declaretheorem[style=definition,qed=$\lozenge$,sibling=theorem]{definition}
\begin{document}
\title{Universal circles for Anosov foliations}

\author[E. Buckminster]{Ellis Buckminster}
\address{Department of Mathematics\\
	University of Pennsylvania}
\email{\href{mailto:ellis17@sas.upenn.edu}{ellis17@sas.upenn.edu}}

\author[S.J. Taylor]{Samuel J. Taylor}
\address{Department of Mathematics\\ 
Temple University}
\email{\href{mailto:samuel.taylor@temple.edu}{samuel.taylor@temple.edu}}

\thanks{During the project, Buckminster was partially supported by a Simons Dissertation Fellowship, and
Taylor was partially supported by NSF grant DMS--2503113 and the Simons Foundation.}

\date{\today}

\begin{abstract}
Thurston introduced the notion of a \emph{universal circle} associated to a taut foliation of a $3$-manifold as a way of organizing the ideal circle boundaries of its leaves into a single circle action. Calegari--Dunfield proved that every taut foliation of an atoroidal $3$-manifold $M$ has a universal circle, but the uniqueness (or lack-thereof) of this structure remains rather mysterious. 

In this paper, we consider the foliations associated to an Anosov flow $\varphi$ on $M$, showing that several constructions of a universal circle in the literature are typically distinct. Moreover, the underlying action of the Calegari--Dunfield \emph{leftmost} universal circle is generally not even conjugate to the universal circle arising from the boundary of the flow space of $\varphi$. 
Our primary tool is a way to use the flow space of $\varphi$ to parameterize the circle bundle at infinity of $\varphi$'s invariant foliations.
\end{abstract}
\maketitle

\begin{figure}[h]
	\centering
	\includegraphics[width=.9\linewidth]{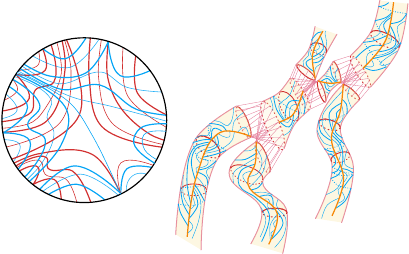}
	\caption{The orbit space of a non $\R$-covered Anosov flow, and a piece of the circle bundle at infinity of its unstable foliation.}\label{fig:coverart}
\end{figure}

\newpage

\setcounter{tocdepth}{1}
\tableofcontents

\section{Introduction}

Let $M$ be a closed, oriented, irreducible, and atoroidal (i.e. $\ZZ \oplus \ZZ \nleq \pi_1(M)$) $3$--manifold. A \defn{taut foliation} $\mc F$ is a cooriented foliation of $M$ by two-dimensional leaves with the property that each leaf is intersected by a closed curve that is positively transverse to $\mc F$. Such foliations have been central objects in the study of $3$--manifolds for decades \cite{Nov65, Gab83, thurston1997three, 2007CalegariBook} and have recently received a resurgence of interest due to their role in the $L$-space conjecture \cite{boyer2013spaces, ozsvath2004holomorphic, juhasz2015survey}. In this paper, we study the taut foliations that arise as the invariant foliations of an Anosov flow on $M$ (i.e. \defn{Anosov foliations}) and the lack of uniqueness of their \emph{universal circles}.
\smallskip

If $\mc F$ is a taut foliation of an atoroidal $3$--manifold $M$, then there is a metric on $M$ that restricts to a hyperbolic metric on each leaf of $\mc F$ \cite{Candel_uniformization}. Lifting to the universal cover $\wt M$, each leaf $\lambda$ of the lifted foliation $\wt {\mc F}$ admits a well-defined ideal boundary $\partial_\infty \lambda$ homeomorphic to a circle, and the fundamental group of $M$ acts on this collection of ideal circles. A \defn{universal circle} $\mc C$ for $\mc F$, originally introduced by Thurston, is a natural way of organizing this data into a \emph{single} circle action $\pi_1(M) \curvearrowright \mc C$. The action comes equipped with a family of monotone maps $\pi_\lambda \colon \mc C \to \partial_\infty \lambda$, one for each leaf $\lambda$ of $\wt {\mc F}$, that intertwines the universal circle action with the action on the disjoint union of all $\partial_\infty \lambda$. We refer the reader to \Cref{sec:ucs} for the complete definition, which was precisely formulated by Calegari--Dunfield \cite{CalDun_UC}. They additionally proved every taut foliation of an atoroidal manifold admits a universal circle.

Despite their role as a basic object in the study of flows and foliations, very little is know about the possible universal circles for a fixed foliation, or under what conditions a \emph{minimal} universal circle is unique. See, for example, \cite[Remark 6.24]{CalDun_UC} or Question 8.9 on Calegari's list \cite{calegari2002problems}. 
Indeed, even the Calegari--Dunfield construction itself has built in flexibility:
as one navigates the various ideal circle boundaries one can choose to take either the leftmost or rightmost turns, thereby leading to both a leftmost universal circle $\mc C^\ell$ and a rightmost universal circle $\mc C^r$; see \Cref{sec:CD} for the details.

More recently, the second author together with Landry and Minsky \cite{LMT_UC} showed that for any foliation $\mc F$ (almost) transverse to a pseudo-Anosov flow $\varphi$, the boundary $\partial \orb_\phi$ of $\varphi$'s flow space is naturally a universal circle for $\mc F$ (see \Cref{subsec:flowspace_uc_background}). This gives new constructions of universal circles, which are necessarily distinct for non orbit equivalent flows. 

However, there are situations where minimal universal circles are quite constrained. Calegari establishes a version of uniqueness for $\RR$--covered foliations \cite{Calegari2000Rcovered}
and foliations with one-sided branching \cite{calegari2003foliations}.
More recently, Huang proved that if $\mc F$ is a depth one foliation transverse to a pseudo-Anosov flow $\varphi$ without perfect fits, then the leftmost universal circle $\mc C^\ell$, the rightmost universal circle $\mc C^r$, and the universal circle $\partial \orb_\phi$ associated to the flow space $\orb_\phi$ are all isomorphic as universal circles \cite{Huang_UC}.

\medskip

We will consider the invariant foliations of an orientable Anosov flow and essentially come to the opposite conclusion: for such foliations all three universal circles are distinct. We next turn to more formally summarize our results.

\subsection*{Results}
Throughout the paper, $M$ is a closed, oriented, atoroidal $3$-manifold and $\varphi$ is a Anosov flow on $M$ with orientable stable/unstable foliations, which are denoted $W^{s/u}$. We remark that for an arbitrary Anosov flow on $M$, its invariant foliations become orientable after taking a double cover. Although our results hold for either invariant foliation, we will state everything in terms of the unstable foliation $W^u$.

It is easy to see that the foliation $W^u$ of $M$ is taut, and so the Calegari--Dunfield construction gives two (potentially isomorphic) universal circles; the leftmost universal circle $\mc C^\ell$ and the rightmost universal circle $\mc C^r$.

Following Fenley, the foliation can also be \emph{tilted} by an isotopy that makes it transverse to $\varphi$. We denote this tilted foliation $W^u_\tilt$ and refer to \Cref{sec:tilting_fol} for additional details. By \cite{LMT_UC}, the boundary $\partial \orb$ of $\varphi$'s flows space $\orb$ is naturally a universal circle for $W^u_\tilt$ and thus for $W^u$; see \Cref{subsec:flowspace_uc_background}.

In his survey of Anosov flows, Potrie suggest exploring the connection between these two notions of a universal circle for Anosov foliations \cite[Section 6]{potrie2025anosov}.
Our first main theorem is that these constructions generally determine different universal circles:

\begin{thm} \label{th:intro_1}
For a non $\RR$--covered Anosov flow, 
the minimal universal circles $\mc C^\ell$, $\mc C^r$, and $\partial \orb$ are all distinct, up to isomorphism.
\end{thm}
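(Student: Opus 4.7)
The plan is to use the paper's promised flow-space parameterization of the circle bundle at infinity of $W^u$ to present all three universal circles as $\pi_1(M)$-equivariant quotients of a single common space, and then to distinguish the three quotients by finding equivariant combinatorial features they do not share. In this model, the Calegari--Dunfield leftmost and rightmost circles $\mc C^\ell$ and $\mc C^r$ are determined by choosing a uniform turning convention at each branch point of the unstable leaf space: the two conventions collapse opposite sides of each non-separated pair of leaves. By contrast, $\partial \orb$ encodes information from \emph{both} the stable and unstable foliations, since two points of $\partial \orb$ are identified only when separated by neither a stable nor an unstable leaf. This yields a strictly richer identification pattern than either Calegari--Dunfield circle sees.

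To distinguish the three circles I would exploit the non $\RR$-covered hypothesis, which guarantees a lozenge in $\orb$ whose corners give a periodic element $\gamma \in \pi_1(M)$. On $\partial \orb$, $\gamma$ fixes the four lozenge-corner points at infinity with a cyclic arrangement of adjacent gaps recording both stable and unstable asymptotics; on $\mc C^\ell$ and $\mc C^r$, which see only the unstable branching, $\gamma$ has strictly fewer orbits of gaps adjacent to its fixed points, ruling out isomorphism with $\partial \orb$. To separate $\mc C^\ell$ from $\mc C^r$, I would use a chain of adjacent lozenges (whose existence in the non $\RR$-covered setting is standard) and an associated periodic element $\delta$: the leftmost and rightmost conventions select genuinely distinct branches along the chain, producing different cyclic orders of $\delta$-fixed points and their gap structure --- features that any universal-circle isomorphism, being $\pi_1(M)$-equivariant and intertwined with the monotone maps $\pi_\lambda$, is forced to preserve.

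The main obstacle is this last step, distinguishing $\mc C^\ell$ from $\mc C^r$. The two circles are built by symmetric recipes (each the mirror of the other) and coincide in the $\RR$-covered case, so the argument must locate an asymmetry that genuinely depends on non $\RR$-coveredness and that survives the quotient to a \emph{minimal} universal circle. The natural route, which I would pursue, is to use the flow-space parameterization to explicitly compute the cyclic pattern of fixed points and adjacent gaps of a carefully chosen periodic element on each circle, exhibiting a concrete combinatorial discrepancy arising from a two-lozenge chain that is invisible to any single lozenge in isolation.
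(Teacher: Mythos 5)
Your overarching idea --- use the stitching map $\Phi$ to realize all the circles' sections inside one copy of $E_\infty$ and then find an invariant they don't share --- is indeed the paper's strategy. But the specific invariant you propose is different from the paper's, your model of the Calegari--Dunfield circles rests on a false premise, and the step you flag as the ``main obstacle'' is a genuine gap.

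First, the premise. You assert that $\mc C^\ell$ and $\mc C^r$ ``see only the unstable branching'' while $\partial\orb$ also records stable data. This is backwards. \Cref{thm:stitching} shows that the markers in $E_\infty$ are exactly $\Phi(\orb^s)$, i.e.\ the stable foliation, and markers are the sole combinatorial input to the Calegari--Dunfield construction; $\mc C^\ell$ and $\mc C^r$ are saturated with stable information. What the CD circles fail to see canonically is how the nonmarker section $s_{\mathbf{nm}}$ should be assigned when branches open up, and the leftmost/rightmost conventions are two ways of resolving that ambiguity. This mislocates where the distinction between the circles actually lives.

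Second, your route for distinguishing $\partial\orb$ from $\mc C^\ell$ (count fixed points and adjacent gaps of a periodic element) is much closer in spirit to the paper's proof of \Cref{thm:not_conj} (\Cref{th:intro_2}, the non-conjugacy statement) than to its proof of \Cref{thm:distinct_ucs}. That argument works, but it is not free: the paper does not use a lozenge-corner element --- it uses an element $g$ fixing a unique point in $\orb$ (so $g$ fixes exactly four points on $\partial\orb$, alternating sink/source by \cite[Prop.\ 3.7]{BFM}), and then the entire \Cref{sec:nonconjugate} is devoted to showing, via \Cref{lem:small_markers}, \Cref{lem:special_source}, and \Cref{lem:special_sink}, that $g$ acts on $\mc C^\ell$ with exactly two fixed points. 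Your lozenge element fixes two orbit-space points and, once one accounts for perfect fits and possible chains of lozenges, the fixed-point count on $\partial\orb$ is less predictable; more seriously, nothing in your sketch shows the count on $\mc C^\ell$ drops. That is precisely the hard part, and it is not automatic.

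Third, and most importantly, your $\mc C^\ell$ versus $\mc C^r$ step is not carried out, and you say so. The paper's argument at this point is quite different and is the heart of \Cref{thm:distinct_ucs}: it uses the crossing criterion of \Cref{lem:distinct_if_cross} (crossing sections force non-isomorphism and even preclude a common cover), then invokes two-sided branching of the non-$\RR$-covered flow to find translates of two branching pairs (one branching from above, one from below) arranged inside a single cylinder $E_\infty|_{t^s}$ so that every leftmost section and every rightmost section through the relevant markers is forced across $s_{\mathbf{nm}}$, while the flow-space sections $\sigma^+_z$, $\sigma^-_x$ coincide with $s_{\mathbf{nm}}$ on that whole cylinder (\Cref{rem:nm_is_flowspace_section}). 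The crossings appear directly in the picture; no periodic element is needed. Your two-lozenge plan might be made to work, but as written it leaves the central claim --- that the leftmost and rightmost conventions produce nonisomorphic circles --- unproved, and it also silently ignores the paper's finer point that there are \emph{two} flow-space circles $\partial\orb^+$ and $\partial\orb^-$, which \Cref{prop:distinct_flowucs} distinguishes by a separate argument using maximal oriented chains in $\orb^s$.
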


The case of \Cref{th:intro_1} for an $\RR$--covered $\varphi$ is better understood and is summarized in \Cref{sec:skew}. 

In \Cref{th:intro_1}, two universal circles are isomorphic if the underlying actions are conjugate by a homeomorphism that intertwines the monotone maps to the $\partial_\infty \lambda$; see \Cref{sec:ucs}.
In fact, one can also tilt the foliation $W^u$ in the opposite direction, and this too results in a distinct universal circle, even though the underlying circle action is again $\pi_1(M) \curvearrowright \partial \orb$.

\smallskip
Furthermore, we prove the following result that shows the Calegari--Dunfield and flow space boundary constructions give different circles in the strongest sense:

\begin{thm} \label{th:intro_2} 
The universal circle actions $\pi_1 M \curvearrowright \partial \orb$ and $\pi_1(M) \curvearrowright \mc C^\ell$ are not conjugate.
\end{thm}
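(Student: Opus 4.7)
The plan is to distinguish the two circle actions by exhibiting a single element $\gamma \in \pi_1(M)$ whose fixed-point set has different cardinality in the two actions; since $|\Fix(\gamma)|$ is visibly a conjugacy invariant of the underlying action, this suffices. Concretely, choose a periodic orbit $\alpha$ of $\varphi$ whose lift in the orbit space $\orb$ is not a corner of any lozenge, and let $\gamma \in \pi_1(M)$ generate its stabilizer. Such orbits are abundant for any non-$\RR$-covered Anosov flow.

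For the action on $\partial \orb$, the element $\gamma$ fixes a unique point $p \in \orb$ with saddle-type hyperbolic dynamics, preserving both the stable leaf $\ell^s$ and the unstable leaf $\ell^u$ through $p$. The four ideal endpoints of $\ell^s \cup \ell^u$ in $\partial \orb$ are all fixed by $\gamma$; they are distinct precisely because $\alpha$ is not a lozenge corner, so no pair is identified by a perfect fit. North--South dynamics of $\gamma$ near each endpoint precludes any further fixed points on $\partial \orb$, yielding $|\Fix(\gamma) \cap \partial \orb| = 4$.

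For the action on $\mc C^\ell$, the element $\gamma$ preserves a unique leaf $\lambda^u$ of $\wt{W^u}$, namely the lift of $W^u(\alpha)$ containing its axis. Equivariance of the leftmost monotone map $\pi_{\lambda^u}^\ell \colon \mc C^\ell \to \partial_\infty \lambda^u$ implies that every $\gamma$-fixed point of $\mc C^\ell$ projects to one of the two $\gamma$-fixed points of $\partial_\infty \lambda^u$, namely the attracting and repelling ends of the axis in $\lambda^u$. The main obstacle is to verify that the preimages of these two points under $\pi_{\lambda^u}^\ell$ are singletons, so that $|\Fix(\gamma) \cap \mc C^\ell| = 2$. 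This requires a detailed analysis of the leftmost Calegari--Dunfield construction for $W^u$: one must identify the gaps of $\pi_{\lambda^u}^\ell$ with accumulation points of non-separated families in $\wt{W^u}$, and then use the hypothesis that $\alpha$ is not a lozenge corner to rule out such accumulation at the axis endpoints. Once this is in place, the inequality $4 \neq 2$ immediately shows that the two circle actions cannot be conjugate.
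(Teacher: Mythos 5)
Your overall strategy coincides with the paper's: fix a primitive $g\in\pi_1(M)$ representing a periodic orbit whose lift has a unique fixed point in $\orb$ (equivalently, not a lozenge corner), compare $|\Fix(g)|$ on $\partial\orb$ versus $\mc C^\ell$, and conclude from $4\neq 2$. The count of four fixed points on $\partial\orb$ is handled essentially as in the paper (via \cite[Proposition~3.7]{BFM}); a small quibble is that the four ideal endpoints of $\ell^s\cup\ell^u$ are \emph{always} distinct since $\ell^s$ and $\ell^u$ cross transversally---the role of the ``no lozenge corner'' hypothesis is to prevent \emph{additional} fixed points on $\partial\orb$ coming from a second fixed point of $g$ in $\orb$, not to separate these four. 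Your observation that equivariance forces any $g$-fixed point of $\mc C^\ell$ to project to a $g$-fixed point of $\partial_\infty\lambda^u$ is correct.

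The genuine gap is in the step you flag as ``the main obstacle,'' and your proposed route does not plausibly close it. You reduce to showing that the two fibers $(\pi^\ell_{\lambda^u})^{-1}(\theta_{\lambda^u})$ and $(\pi^\ell_{\lambda^u})^{-1}(\Phi(p))$ are singletons, and suggest doing so by ``identifying gaps with accumulation points of non-separated families'' and invoking the lozenge hypothesis. There is no such description of the gaps of $\pi^\ell_{\lambda^u}$ available, and a priori the gap over the nonmarker point $\theta_{\lambda^u}$ is exactly the sort of thing one would expect to be nontrivial: it consists of all leftmost sections that happen to touch the nonmarker section $s_\textbf{nm}$ at $\lambda^u$, and the structure of such sections near $s_\textbf{nm}$ is quite delicate (cf.\ \Cref{rem:totally_disconnected}). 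Indeed in the skew case, which the paper works out explicitly in \Cref{sec:skew}, the gap over $\theta_\lambda$ is a nondegenerate arc. The paper instead runs a genuinely dynamical argument: it uses the stitching homeomorphism $\Phi$ (\Cref{thm:stitching}) to read off the $g$-dynamics on $E_\infty|_{\ell^s}$, constructs arbitrarily small markers straddling $\theta_{\lambda^u}$ (\Cref{lem:small_markers}), shows that $g^{\mp n}(s^\ell_x)$ converges pointwise to $s^\ell_z$ and $s^\ell_{\Phi(p)}$ for every $x\in\partial_\infty\lambda^u\ssm\{\Phi(p),z\}$ (\Cref{lem:special_source}, \Cref{lem:special_sink}, using both the ``fencing'' by small markers and the turning-corners rule at cataclysms), and then sandwiches an arbitrary section between these orbits (\Cref{prop:nconj_sink_source}) to get sink--source dynamics. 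That the two fibers are singletons emerges as a consequence of this analysis rather than serving as a lemma on the way to it, and you would need to supply an argument at essentially this level of detail to make your outline into a proof.
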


\begin{figure}
	\centering
	\includegraphics[width=0.7\linewidth]{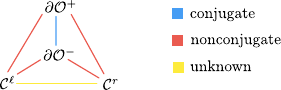}
	\caption{The relations between the four universal circles considered. The flow space boundary universal circles from tilting both ways are denoted $\partial\orb^\pm$. All four universal circles are nonisomorphic.}\label{fig:diagram}
\end{figure}

The results of \Cref{th:intro_1} and \Cref{th:intro_2} are summarized in \Cref{fig:diagram}.

\Cref{th:intro_2} is proven using a detailed analysis of Calegari--Dunfield construction applied to $W^u$. To explain, let $\mc L = \wt M / \wt W^u$ be the leaf space of the foliation $W^u$, i.e. the quotient space of the universal cover $\wt M$ obtained by identifying points contained in the same leaf of the lifted foliation $\wt W^u$.
To each leaf $\lambda \in \mc L$, there is a ideal circle $\partial_\infty \lambda$ and we let $E_\infty$ be the \emph{circle bundle at infinity} over $\mc L$ (also called the \emph{ideal tubulation} in \cite{potrie2025anosov}), which as a set is the union of the $\partial_\infty\l$. For details, see \Cref{sec:ucs}. 

At least morally, a universal circle is constructed by finding a sufficiently large $\pi_1(M)$--invariant collection of `noncrossing' sections $s\colon \mc L \to E_\infty$ of the circle bundle $E_\infty \to \mc L$. 
The noncrossing condition implies that these sections admit a natural circular order that is imposed by the circles $\partial_\infty \lambda$ for each $\lambda \in \mc L$. The associated universal circle $\mc C$ then arises from the completion of this circularly ordered set, and there is an induced action $\pi_1(M) \curvearrowright \mc C$ by orientation preserving homeomorphisms.
Hence, understanding the structure of $E_\infty$ is a key step toward understanding the possible universal circles for a foliation.

Since $W^u$ is the unstable foliation of the Anosov flow $\varphi$, the orbits in a given leaf are asymptotic in backwards time. This translates into the statement that for  each leaf $\lambda$ of $\wt W^u$, there is a unique point $\theta_\lambda \in \partial_\infty \lambda$ that is the backwards endpoint of each orbit in $\lambda$ of the lifted flow $\wt\varphi$. The assignment $\lambda \mapsto \theta_\lambda$ determines a (continuous) section $s_{\bf nm} \colon \mc L \to E_\infty$ that we call the \emph{nonmarker section} (\Cref{sec:markers}). The following result is our main tool to understand $E_\infty$. 

\begin{thm}\label{th:intro_3}
The assignment of each $p\in \lambda$ to the endpoint in $\partial_\infty \lambda$ of its forward $\wt\phi$--orbit induces an equivariant homeomorphism
\[
\Phi \colon \orb \to E_\infty \ssm s_{\bf nm}(\mc L).
\]
\end{thm}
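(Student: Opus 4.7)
The plan is to first define $\Phi$ as the descent of a natural map $\wt M \to E_\infty$, then check bijectivity onto the complement of the nonmarker section, and finally address continuity in both directions. Send each $p\in \wt M$ to the forward endpoint of its $\wt\phi$-orbit in the ideal circle $\partial_\infty \wt W^u(p)$. Since this endpoint depends only on the $\wt\phi$-orbit through $p$, the assignment factors through $\orb$ to give $\Phi$. Equivariance is automatic: deck transformations permute the leaves of $\wt W^u$ and the orbits of $\wt\phi$, and commute with taking ideal endpoints, so the induced map intertwines the two $\pi_1(M)$-actions.

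For the image and bijectivity, I will invoke the standard description (due to Fenley) of an Anosov unstable leaf: the $\wt\phi$-orbits in a single unstable leaf $\lambda$ are uniform quasigeodesic rays in $\lambda$ all sharing the common backward endpoint $\theta_\lambda = s_{\bf nm}(\lambda)$ and having pairwise distinct forward endpoints in $\partial_\infty \lambda$, so that the forward-endpoint assignment gives a bijection between the orbits in $\lambda$ and $\partial_\infty \lambda \ssm \{\theta_\lambda\}$. Intuitively, in the hyperbolic metric on $\lambda$ the strong stable leaves play the role of horocycles centered at $\theta_\lambda$ while the $\wt\phi$-orbits are transverse quasigeodesic rays. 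Unioning across $\lambda \in \mc L$ immediately shows $\im(\Phi) \subset E_\infty \ssm s_{\bf nm}(\mc L)$ and that $\Phi$ is a set-theoretic bijection onto its target.

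For continuity of $\Phi$, suppose $[p_n] \to [p]$ in $\orb$. Lifting to representatives $p_n \to p$ in $\wt M$, we have $\wt W^u(p_n) \to \wt W^u(p)$ in $\mc L$, and the forward $\wt\phi$-rays from $p_n$ converge uniformly on compact time intervals to the ray from $p$. The topology on the circle bundle $E_\infty$ is arranged precisely so that this type of fiberwise convergence of uniform quasigeodesic rays in nearby leaves forces their forward endpoints to converge in $E_\infty$, giving $\Phi([p_n]) \to \Phi([p])$.

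The main obstacle is continuity of $\Phi^{-1}$, since the topology on $E_\infty$ is delicate when $\mc L$ is non-Hausdorff. My approach is to invoke invariance of domain. By Fenley's work on Anosov flows in atoroidal manifolds, $\orb$ is homeomorphic to $\R^2$, and the space $E_\infty \ssm s_{\bf nm}(\mc L)$ is locally modeled on $\R^2$ since it is locally a circle bundle over a $1$-manifold with one fiber point removed, i.e.\ locally $\R\times \R$. A continuous injection from $\R^2$ into a space that is locally $\R^2$ is automatically an open map, so $\Phi$ is a homeomorphism onto its image. If preferred, openness at a point $\xi$ can be verified concretely in a flow box around a representative $p$ of $\Phi^{-1}(\xi)$, where the product structure simultaneously trivializes $\wt W^u$ and $\wt\phi$ and reduces the claim to comparing two explicit product charts, with the quasigeodesic structure of nearby orbits ensuring that points realizing nearby boundary limits must lie near $p$.
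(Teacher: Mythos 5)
Your overall architecture is sound and parallel to the paper's, but the two proofs route around the hard direction in opposite ways. You establish continuity of $\Phi$ directly (via convergence of orbit rays) and then invoke invariance of domain to get openness. The paper does the reverse: it never argues continuity of $\Phi$ head-on, instead proving that $\Phi$ is an \emph{open} bijection and then recovering continuity from invariance of domain. Concretely, the paper first shows (via \Cref{lem:stab_marker}) that $\Phi$ carries leaves of $\orb^s$ to markers, restricts attention to a cylinder $E_\infty|_m$ over a stable line $m$, observes that $\Phi^{-1}(E_\infty|_m) = \mc S^u(m)$, and then shows openness by taking a compact foliation box $U$ bounded by stable/unstable arcs, checking that $\Phi(\partial\ol U)$ is a nullhomotopic Jordan curve in the cylinder, and arguing that $U$ is sent to its interior. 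One payoff of the paper's route is that it simultaneously establishes the identification of $\orb^s$ with markers and $\orb^u$ with the punctured circle fibers — the second half of \Cref{thm:stitching} — which is needed repeatedly later; your argument proves the homeomorphism in isolation and you'd still owe that foliation correspondence.

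Two small things worth tightening. First, in the adapted setup (\Cref{sec:goodform}) the $\wt\phi$-orbits in a leaf $\lambda$ are honest hyperbolic geodesics, not merely uniform quasigeodesics, and that is exactly what makes your continuity claim clean: the topology on $E_\infty$ is defined via $UT\wt{\mc F}|_\tau \cong E_\infty|_\tau$, and for a transversal $\tau$ through $p$ the point $\wt\Phi(p')$ corresponds to $(q', v')$ where $q' = \wt W^u(p')\cap\tau$ and $v'$ is the unit tangent at $q'$ toward $\wt\Phi(p')$; since $q'$ and $p'$ are both near $p$ in $\wt W^u(p')$ and orbits are geodesics with continuously varying direction, $v' \to v_0$. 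Phrasing this in terms of ``uniform quasigeodesics'' and ``compact-time convergence of rays'' obscures the point that compact-time convergence alone does not control endpoints; the geodesic structure plus the $UT$ model of the $E_\infty$ topology is what does. Second, when applying invariance of domain, it's worth noting explicitly that $E_\infty$ is locally Euclidean (a cylinder over any transversal) even though it is not globally Hausdorff when $\mc L$ branches; invariance of domain is a local statement, so this is enough, but the paper is careful to localize to a cylinder $E_\infty|_m$ for exactly this reason, and your ``locally $\R\times\R$'' remark is doing the same work implicitly.

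Overall: correct, and a genuine alternative to the paper's openness-first argument, though it proves strictly less of \Cref{thm:stitching}.
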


As before, $\orb$ is the orbit space of $\phi$, defined as $\orb = \wt M / \wt \phi$ where $\wt \phi$ is the lifted flow. By work of Barbot \cite{barbot1995caracterisation} and Fenley \cite{fenley1994anosov}, $\orb$ is a plane bifoliated by $\orb^{u/s}$ which are the projections of the foliations $\wt W^{u/s}$ of $\wt M$. The importance of \Cref{th:intro_3} lies in the fact that $\Phi$ maps the foliations $\orb^{s/u}$ to two natural foliations of  $E_\infty \ssm s_{\bf nm}(\mc L)$. Essentially by definition, the unstable foliation $\orb^u$ is mapped to the foliation of $E_\infty \ssm s_{\bf nm}(\mc L)$ by open arcs $\partial_\infty \lambda \ssm \theta_\lambda$, for $\lambda \in \mc L$. More importantly, the stable foliation $\orb^s$ is mapped by $\Phi$ to the foliation by \emph{markers} (justifying the name given to $s_{\bf nm}$). Informally, markers are arcs in the $E_\infty$ determined by the directions where leaves of $\wt W^u$ are nonexpanding (see \Cref{sec:CD}) -- they are the central object in the Calegari--Dunfield construction of $\mc C^\ell$ and $C^r$. 

In this sense, \Cref{th:intro_3} together with a understanding of the foliations $\orb^{u/s}$ is the main tool to study the relation between $\partial \orb$ and leftmost and rightmost universal circles that is required to establish \Cref{th:intro_1} and \Cref{th:intro_2}. Indeed, Potrie suggests directly studying the action on the `more canonical' circle bundle at infinity $E_\infty$ \cite[Section 6]{potrie2025anosov}, and \Cref{th:intro_3} provides a direct approach for doing so in terms of the flow space $\orb$.

\subsection*{Outline of paper}
After reviewing the needed background in \Cref{sec:background}, \Cref{sec:cd} is concerned with universal circles and their constructions in the literature. \Cref{sec:tilting} gives a procedure to tilt the unstable foliations $W^u$ so that it becomes transverse to $\varphi$ and then studies the shadows of leaves in the flow space of $\varphi$. \Cref{sec:para} relates $E_\infty$ to the flow space $\orb$ and is where \Cref{th:intro_3} and its consequences are proved. These are the main technical tools of the paper. In \Cref{sec:skew}, we apply these results to the much easier (and essentially already known) case of skew Anosov flows. Finally, \Cref{th:intro_1} is proved in \Cref{sec:distinct_ucs} and \Cref{th:intro_2} is proved in \Cref{sec:nonconjugate}.

\subsection*{Acknowledgments}
We thank Thomas Barthelm\'e for help with using \cite[Corollary 5.23]{barthelme2023collapsed}, Katie Mann for helpful conversations about mapping special sections to the flow space, and Mladen Bestvina for asking whether the universal circles we consider have common covers. We also thank Junzhi Huang, Michael Landry, and Chi Cheuk Tsang for helpful comments on an earlier draft.
\section{Background}
\label{sec:background}

Here we collect some background needed throughout the paper.

\subsection{Circular orders and monotone maps}\label{sec:circle_background}

A \defn{circular order} on a set $X$ is a map
$\langle \cdot,\cdot,\cdot\rangle \colon X\times X\times X\to\{-1,0,1\}$
such that 
\begin{enumerate}
	\item $\langle a,b,c\rangle=\pm1$ if and only if $a,b,c$ are pairwise distinct, and
	\item for all $a,b,c,d\in X$, \[\langle b,c,d\rangle-\langle a,c,d\rangle +\langle a,b,d\rangle +\langle a,b,c\rangle=0.\]
\end{enumerate}

Intuitively, a nondegenerate ordered triple gets mapped to $1$ if it is ordered clockwise and to $-1$ if it is ordered counterclockwise. 

Whenever we have a circular order on an uncountable, second countable set $X$, we can complete it to a circle $\mc C$. We refer the reader to \cite{frankel2013quasigeodesic, bonatti2301action} for the precise details, but informally summarize this process as follows:
First, embed $X$ into the circle $S^1$ in a circular order preserving way. Take the closure of this set in $S^1$, obtaining a closed subset of $S^1$ whose complement is an at most countable union of open intervals. By collapsing the closures of these intervals, we obtain the circle $\mc C$. The map $i\colon X\to\mc C$ is an embedding so long as no point in $X$ is isolated on one or both sides. 

A \defn{monotone map} $\pi\colon\mc C_1\to\mc C_2$ between oriented circles is a degree one map such that the preimage of each point is connected. A maximal open interval in a point preimage is called a \defn{gap}, and the \defn{core} of $\pi$ is the complement (in $\mc C_1$) of its gaps.

\subsection{Anosov flows and their flow spaces}\label{sec:anosov_background}

Throughout the paper, we work in the category of \emph{topological} Anosov flows. Since our manifolds are atoroidal, such flows are transitive (see \cite[Proposition 2.6]{mosher1992dynamical} and more generally \cite[Corollary 1.6]{barthelme2024non}) and hence orbit equivalent to \emph{smooth} Anosov flows by work of Shannon \cite{Shannon20}. Here, an orbit equivalence is a homeomorphism sending oriented orbits to oriented orbits. Thus, our results also hold for smooth Anosov flows, but we will make use of the greater flexibility in the topological setting. We refer the reader to the first section of the recent book by Barthelm\'e and Mann \cite{barthelme2025pseudo} for the full story.

\medskip
To state the definition of a topological Anosov flow, following \cite{barthelme2025pseudo}, we first recall that if $W$ is a foliation of $M$ and $x \in M$, then $W_\epsilon(x)$ is the component of $W(x) \cap B_\epsilon(x)$ containing $x$. Here, $W(x)$ is the leaf through $x$ and $B_\epsilon(x)$ denotes the $\epsilon$--ball around $x$. Also, a standard reparameterization is an increasing homeomorphism $\sigma \colon \RR \to \RR$ that fixes $0$.

A \defn{(topological) Anosov flow} $\varphi$ on $M$ is a flow leaving invariant a pair of topologically transverse two-dimensional foliations $W^s$ and $W^u$ whose leaves intersect along orbits of $\varphi$ such that 
\begin{itemize}
\item For any $\epsilon >0$ sufficiently small, if $y \in W^s_\epsilon(x)$ (resp. $y \in W^u_\epsilon(x)$), there is a standard reparameterization $\sigma$ such that $d(\phi_t(x),\phi_{\sigma(t)}(y)) \to 0$ as $t \to \infty$ (resp. as $t\to -\infty)$. 
\item For any $\epsilon>0$ sufficiently small, there exits $\delta>0$ such that if $y\in W^s_\epsilon(x)$ (resp. $y \in W^u_\epsilon(x)$) is not in the same $\epsilon$--local orbit as $x$ then for any standard reparameterization $\sigma$, there exists $t<0$ (resp. $t>0$) with $d(\phi_t(x),\phi_{\sigma(t)}(y)) > \delta$.
\end{itemize}

The foliation $W^s$ is called the (weak) \defn{stable foliation} and $W^u$ is called the (weak) \defn{unstable foliation}.

\medskip
For any 2-dimensional foliation $\mc F$ on a 3-manifold $\wt M$, we define the \defn{leaf space} of $\mc F$ by $\mc L=\wt M/\wt{\mc F}$. In general, this is a simply connected, possibly non-Hausdorff 1-manifold. If $\mc L\cong\R$, we say $\mc F$ is \defn{$\R$-covered}, otherwise it is \defn{non $\R$-covered}. In the case of an Anosov flow, we denote the leaf space of $W^{s/u}$ by $\mc L^{s/u}$. Furthermore, we say the flow is \defn{$\R$-covered} if both $W^s$ and $W^u$ are, and \defn{non $\R$-covered} otherwise.

\subsubsection{The flow space and its boundary} An important tool in the study of Anosov flows is the \defn{flow space} (also called the \defn{orbit space}), originally studied by Barbot \cite{barbot1995caracterisation} and Fenley \cite{fenley1994anosov}. The flow space of an Anosov flow $\varphi$, denoted $\orb$, is defined as 
\[\orb=\wt M/\wt \varphi,\]
where $\wt M$ is the universal cover of $M$, and $\wt \varphi$ is a lift of $\varphi$ to $\wt M$. Let \[\Theta\colon\wt M\to\orb\] denote projection to the flow space. The flow space is topologically a plane and inherits an action by $\pi_1(M)$ coming from the action by deck transformations. The lifted weak stable and unstable (two-dimensional) foliations $\wt W^{s/u}$ descend to a pair of one-dimensional foliations \[\orb^{s/u}=\Theta(\wt W^{s/u}),\] called the \defn{stable and unstable foliations of $\orb$}. In figures, we will always draw $\orb^s$ as blue and $\orb^u$ as red. These foliations are preserved by the $\pi_1(M)$ action and can be quite complicated; see \Cref{fig:orbitspace}. 

Since $W^s$ and $W^u$ are orientable by assumption, the foliations $\orb^{s/u}$ admit a $\pi_1(M)$--invariant orientation. Using the induced orientation on $\orb$, we fix orientations on $\orb^{s/u}$ so that $\orb$ is oriented by charts taking leaves of $\orb^u$ to the oriented $x$--axis and leaves of $\orb^s$ to the oriented $y$--axis.

\begin{figure}[h]
	\centering
	\includegraphics[width=0.5\linewidth]{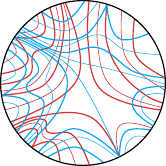}
	\caption{The orbit space of a non $\R$-covered Anosov flow.}\label{fig:orbitspace}
\end{figure}

\smallskip

Topologically, the flow space with its two foliations falls into one of three categories; see \cite{barbot1995caracterisation}, \cite{fenley1994anosov}, and \cite[Theorem 2.6.1]{barthelme2025pseudo}.
It is either
\begin{enumerate}
	\item \defn{trivial}, i.e. homeomorphic to $\R^2$ with the horizontal and vertical foliations,
	\item \defn{skew}, i.e. homeomorphic to the strip 
	\[\{(x,y)\in\R^2\:\mid\:x-1<y<x\}\] with the horizontal and vertical foliations, or
	\item \defn{branching}, meaning the foliations $\orb^s$ and $\orb^u$ are both non $\R$-covered.
\end{enumerate}

Since $\mc L^{s/u}\cong \orb/\orb^{s/u}$ by a canonical homeomorphism, we don't distinguish between the leaf spaces of $W^s$ (respectively $W^u$) and $\orb^s$ (respectively $\orb^u$). 

By work of Fenley \cite{fenley2012ideal}, the flow space can by compactified with a circle, called \defn{the boundary of the flow space} and denoted $\partial\orb$. This circle can be constructed by way of the foliations $\orb^{s/u}$ as follows. Consider the set 
\[\mc C=\{\text{ends of leaves of }\orb^{s/u}\}.\] This set has a natural circular order by considering the intersections of rays in $\orb^{s/u}$ with the boundaries of larger and larger closed disks in $\orb$. As described in \Cref{sec:circle_background}, $\mc C$ can be completed to a circle, which is what we call $\partial\orb$. The topology on $\orb\cup\partial\orb$ is defined so that leaves in $\orb^{s/u}$ limit onto the corresponding ends in $\partial\orb$. We use $\overline{\orb}$ to denote $\orb\cup\partial\orb$. The action $\pi_1(M)\curvearrowright\orb$ extends continuously to an action $\pi_1(M)\curvearrowright\overline{\orb}$.

\subsubsection{Structures in the flow space} \label{sec:struct}
Leaves $\ell^s$ and $\ell^u$ of $\orb^s$ and $\orb^u$ make a \defn{perfect fit} if there is an embedded rectangle \[R=i([0,1]\times[0,1])\subseteq\overline{\orb}\] such that
\begin{enumerate}
	\item $i(x,y)\in\partial\orb$ if and only if $(x,y)=(0,0)$,
	\item for all $x>0$, $i(x,0)\subseteq\ell^s$,
	\item for all $y>0$, $i(0,y)\subseteq\ell^u$, and
	\item the interior of $R$ is trivially foliated by $\orb^{s/u}$.
\end{enumerate}
See \Cref{fig:perfect_fit}. The rectangle $R$ is called a \defn{perfect fit rectangle}. 

\begin{figure}[h]
	\centering
	\includegraphics[width=0.8\linewidth]{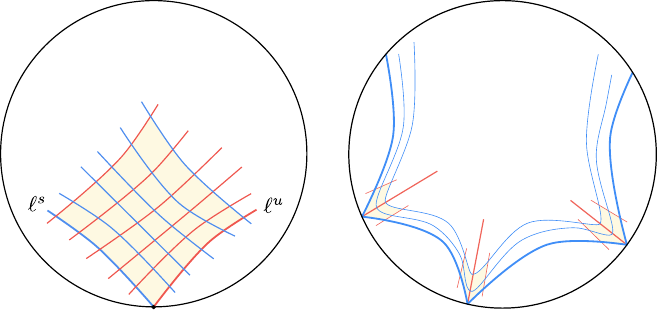}
	\caption{On the left, a perfect fit rectangle in yellow. On the right, pairwise nonseparated leaves of $\orb^{s}$ form a chain and force adjacent perfect fit rectangles, in yellow.}\label{fig:perfect_fit}
\end{figure}

Leaves of $\orb^{s/u}$ that are nonseparated in the leaf space (i.e. fail to have disjoint neighborhoods) are called \defn{branching} or \defn{nonseparated leaves}. By \cite[Theorem B]{fenley1998structure}, sets of pairwise nonseparated leaves form a chain, and consecutive leaves in the chain make 
adjacent perfect fits, as in \Cref{fig:perfect_fit}. By \cite[Corollary F]{fenley1998structure}, such chains always have finite length for an Anosov flow on an atoroidal 3-manifold.

A leaf of $\orb^{s/u}$ that doesn't make any perfect fits is called a \defn{regular} leaf. Such a leaf is necessarily non-branching.

Since the foliations $\orb^{s/u}$ have been oriented,
given a pair of nonseparated leaves in $\orb^{s/u}$, it makes sense to say whether they are \defn{branching from above} or \defn{branching from below}. See \Cref{fig:branching}.

\begin{figure}[h]
	\centering
	\includegraphics[width=\linewidth]{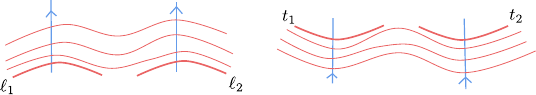}
	\caption{The unstable leaves $\ell_1$ and $\ell_2$ are branching from above, while the unstable leaves $t_1$ and $t_2$ are branching from below. The arrows on stable leaves indicate their orientations, which in turn determine the coorientations on intersecting unstable leaves.}\label{fig:branching}
\end{figure}

Fenley showed that if an Anosov flow with orientable 
foliations $W^{s/u}$ is non $\R$-covered, then it has two sided branching, i.e. both foliations have leaves that are branching from below, as well as leaves that are branching from above \cite{fenley1995sided}.

\subsubsection{``Good form'' for the flow}
\label{sec:goodform}

Following Barthelm\'e, Fenley, and Potrie \cite[Corollary 5.23]{barthelme2023collapsed} (see also \cite[Proposition 5.3]{potrie2025anosov}), we can replace $\varphi$ with an orbit equivalent (topological) Anosov flow whose unstable foliation $W^u$ has smooth leaves with the additional properties that:
\begin{enumerate}
\item there is a continuous metric $g$ on $M$ so that the restriction of $g$ to each unstable leaf of $W^u$ is hyperbolic,
\item orbits of $\varphi$ in each leaf of $W^u$ are geodesic, and 
\item the strong unstable foliations $W^{uu}$ in each leaf of $W^u$ are projections of horocycles.
\end{enumerate}

With these properties, we say the flow $\varphi$ is \defn{adapted} to $W^u$. Throughout the paper, we fix $\varphi$ to be the flow adapted to $W^u$.  This will be especially useful in characterizing the markers of $W^u$ as we will do below.

Similarly, there is an orbit equivalent flow that is adapted to $W^s$. We will also use this fact when carefully describing the `tilting' of $W^u$ in \Cref{sec:tilting}.

\section{Universal circles, markers, and the Calegari--Dunfield construction}
\label{sec:cd}

In this section, we define the notion of a universal circle for a foliation and review both the Calegari--Dunfield construction as well as the Landry--Minsky--Taylor construction (in the presence of a transverse pseudo-Anosov flow). Much of this involves explaining the tools used to define the Calegari--Dunfield universal circle, and we refer the reader to \cite{CalDun_UC} for additional details.

\subsection{The circle bundle at infinity and universal circles}\label{sec:ucs}

We begin by reviewing the structure associated to a taut foliation that is needed to define a universal circle.

\begin{figure}[h]
	\centering
	\includegraphics[width=0.55\linewidth]{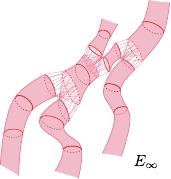}
	\caption{A piece of $E_\infty$ for a non $\R$-covered foliation. The intersticial regions indicate the topology around branching leaves. Selected circle fibers are in red.}\label{fig:Einfinityalt}
\end{figure}

Suppose $\mc F$ is a taut foliation of $M$. By a theorem of Candel \cite{Candel_uniformization}, we can put a \defn{leafwise hyperbolic metric} on $M$, i.e. a continuous Riemannian metric $g$ such that the pullback to each leaf is hyperbolic. Lifting everything to the universal cover $\wt M$, we have that $\wt M\cong \R^3$, and each leaf of $\wt{\mc F}$ is topologically a plane. 

Let $\mc L$ denote the leaf space of $\mc F$, i.e. $\mc L=\wt M/\widetilde{\mc F}$.
Recall that if $\mc L\cong\R$, we say $\mc F$ is \defn{$\R$-covered}, otherwise it is \defn{non $\R$-covered}. For a non $\R$-covered foliation, two leaves of $\mc L$ are \defn{comparable} if there is a transversal in $\wt M$ connecting them and \defn{incomparable} otherwise. Two incomparable leaves are in the same \defn{cataclysm} if they are nonseparated as points in $\mc L$.

Geometrically, each leaf $\lambda$ of $\wt{\mc F}$ is isometric to $\HH^2$ under the pullback of the lifted metric $\wt g$. Thus, we can compactify $\lambda$ with a circle $\partial_\infty\lambda$ via the standard compactification of $\HH^2$ with $\partial_\infty\HH^2$. As a set, we define the \defn{circle bundle at infinity} by \[E_\infty=\bigcup_{\lambda\in\mc L}\partial_\infty\lambda.\] We call the circles $\partial_\infty\lambda$ the \defn{circle fibers}.

We topologize this set as follows. Let $\tau$ be a transversal to $\wt{\mc F}$, i.e. an embedded interval in $\wt M$ transverse to $\wt{\mc F}$. Then $\tau$ determines an embedded interval in $\mc L$, which we also call $\tau\subset \mc L$. We define a map \[\text{UT}\wt{\mc F}\mid_\tau\to E_\infty\mid_\tau\] by sending a point $(p,v)$ to the forward endpoint of the geodesic through $p$ with tangent vector $v$. We then topologize $E_\infty$ such that this map is a homeomorphism for all $\tau$. See \cite{CalDun_UC} for the details and \Cref{fig:Einfinityalt} for an illustration.

With this topology, $E_\infty$ has the structure of a circle bundle over the leaf space, where the fibers are the various circles at infinity for leaves of $\wt{\mc F}$. This space inherits an action of $\pi_1(M)$ coming from the action by deck transformations on $\wt M$. 

We are now ready to define the notion of a universal circle for $\mc F$. 

\begin{definition}\label{def:uc}
	A \defn{universal circle} for $\mc F$ is a circle $\mc C$ together with an action $\pi_1(M)\curvearrowright\mc C$ and a set of monotone maps \[\{\pi_\lambda\colon\mc C\rightarrow \partial_\infty\lambda\:\mid\:\lambda\in\mc L\}\] such that the following conditions are satisfied:
	\begin{enumerate}
		\item For all $g\in\pi_1(M)$ and for all $\lambda\in\mc L$, the following diagram commutes, where the horizontal maps come from the respective actions $\pi_1(M)\curvearrowright\mc C$ and $\pi_1(M)\curvearrowright E_\infty$.
		\[\begin{tikzcd}
			{\mc C} & {\mc C} \\
			{\partial_\infty\lambda} & {\partial_\infty g\cdot\lambda}
			\arrow["{\cdot g}", from=1-1, to=1-2]
			\arrow["{\pi_\lambda}"', from=1-1, to=2-1]
			\arrow["{\pi_{g\cdot\lambda}}", from=1-2, to=2-2]
			\arrow["{\cdot g}"', from=2-1, to=2-2]
		\end{tikzcd}\]
		\item The function $\pi\colon\mc C\times\mc L\to E_\infty$ defined by \[\pi(z,\lambda)=\pi_\lambda(z)\] is continuous.
		\item For any pair $\lambda$, $\mu$ of incomparable leaves of $\mc L$, the core of $\pi_\lambda$ is contained in a single gap of $\pi_\mu$. \qedhere
	\end{enumerate} \end{definition}

\smallskip	
The universal circle $\mc C$ is \defn{minimal} if no open interval in $\mc C$ is contained in a gap of $\pi_\lambda$ for all $\lambda \in \mc L$. Equivalently, there are no distinct points $x,y \in \mc C$ such that $\pi_\lambda(x) = \pi_\lambda(y)$ for each $\lambda$. Indeed, if such points are always identified, then each $\pi_\lambda$ either collapses $[x,y]$ or $[y,x]$ as oriented subintervals of $\mc C$, and we can partition $\mc L$ into $\mc L_{[x,y]}$ or $\mc L_{[y,x]}$ accordingly. These are disjoint closed sets whose union is $\mc L$ and so one of them must be empty. Thus, we can collapse such intervals to build a minimal universal circle.

\smallskip

For each point $z\in\mc C$, we can define a (continuous) section $\sigma_z \colon \mc L \to E_\infty$ of $E_\infty$, as a circle bundle over the leaf space, by 
\[
\sigma_z(\lambda) = \pi_\lambda(z).
\]
The set of sections for a given universal circle can coalesce, but they can't cross, essentially because the maps $\pi_\lambda$ are monotone. 

Given two universal circles $\mc C$, $\mc C'$ with monotone maps $\{\pi^{\mc C}_\lambda\}_\l,\{\pi_\lambda^{\mc C'}\}_\l$, we will say they are \defn{isomorphic} as universal circles, denoted $\mc C\cong\mc C'$, if there is a $\pi_1(M)$-equivariant homeomorphism $h\colon \mc C\rightarrow\mc C'$ that intertwines the monotone maps, i.e. such that \[\pi_\lambda^{\mc C}=\pi_\lambda^{\mc C'}\circ h\] for all $\lambda\in\mc L$. By the following lemma, isomorphic universal circles determine the same sets of sections, so this is a rather strong notion of isomorphism.

\begin{lemma}\label{lem:same_sections}
	Let $\mc C_1,\mc C_2$ be universal circles for $\mc F$ with sets of sections $\mc S_1$ and $\mc S_2$. If $\mc C_1\cong\mc C_2$, then $\mc S_1=\mc S_2$. The converse is true if $\mc C_1$ and $\mc C_2$ are minimal.
\end{lemma}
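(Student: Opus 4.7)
The forward direction is essentially definitional. If $h \colon \mc C_1 \to \mc C_2$ is an isomorphism of universal circles, then for each $z \in \mc C_1$ and each $\lambda \in \mc L$,
\[
\sigma_z^{\mc C_1}(\lambda) = \pi_\lambda^{\mc C_1}(z) = \pi_\lambda^{\mc C_2}(h(z)) = \sigma_{h(z)}^{\mc C_2}(\lambda),
\]
so $\sigma_z^{\mc C_1} = \sigma_{h(z)}^{\mc C_2}$ and hence $\mc S_1 \subseteq \mc S_2$. Applying the same argument to $h^{-1}$ gives equality.

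For the converse, assume $\mc C_1, \mc C_2$ are minimal and $\mc S_1 = \mc S_2$. Given $z \in \mc C_1$, the section $\sigma_z^{\mc C_1}$ lies in $\mc S_2$, so there exists $w \in \mc C_2$ with $\sigma_z^{\mc C_1} = \sigma_w^{\mc C_2}$. Minimality of $\mc C_2$ forces $w$ to be unique: if two distinct points of $\mc C_2$ induced the same section they would be identified under every $\pi_\lambda^{\mc C_2}$, contradicting minimality. This defines $h \colon \mc C_1 \to \mc C_2$, and the symmetric construction using $\mc C_2 \to \mc C_1$ provides a two-sided inverse, so $h$ is a bijection. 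The identity $\pi_\lambda^{\mc C_2} \circ h = \pi_\lambda^{\mc C_1}$ holds by construction. Equivariance then follows from the equivariance built into each universal circle: using the commutative diagram in \Cref{def:uc}(1) to rewrite sections under the $\pi_1(M)$-action,
\[
\sigma_{z \cdot g}^{\mc C_1}(g \cdot \lambda) = \sigma_z^{\mc C_1}(\lambda) \cdot g = \sigma_{h(z)}^{\mc C_2}(\lambda) \cdot g = \sigma_{h(z) \cdot g}^{\mc C_2}(g \cdot \lambda),
\]
so $\sigma_{z \cdot g}^{\mc C_1}$ and $\sigma_{h(z) \cdot g}^{\mc C_2}$ agree on every leaf, and minimality gives $h(z \cdot g) = h(z) \cdot g$.

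The main point to verify is that $h$ is a homeomorphism, for which the key input is the joint continuity statement \Cref{def:uc}(2). Suppose $z_n \to z$ in $\mc C_1$. By compactness of $\mc C_2$ it suffices to identify the limit of any convergent subsequence $h(z_{n_k}) \to w$. For any fixed $\lambda \in \mc L$, joint continuity of $\pi^{\mc C_i}$ gives
\[
\pi_\lambda^{\mc C_2}(w) = \lim_k \pi_\lambda^{\mc C_2}(h(z_{n_k})) = \lim_k \pi_\lambda^{\mc C_1}(z_{n_k}) = \pi_\lambda^{\mc C_1}(z) = \pi_\lambda^{\mc C_2}(h(z)).
\]
Since this holds for every $\lambda$, minimality of $\mc C_2$ forces $w = h(z)$, so $h(z_n) \to h(z)$. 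The same argument applied to $h^{-1}$ shows it is continuous as well.

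I expect the main subtlety to be this continuity step: one must lean on both the joint continuity condition (2) and on minimality to upgrade pointwise-in-$\lambda$ agreement into equality in $\mc C_2$. Everything else (well-definedness, equivariance, intertwining the $\pi_\lambda$'s) is essentially formal once minimality provides the needed uniqueness.
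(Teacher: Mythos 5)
Your proof is correct. The forward direction is identical to the paper's. For the converse, the paper is extremely terse: it notes that minimality means $z\mapsto\sigma_z$ is injective, that $\mc S$ is its image, and then simply asserts that ``$\mc C$ can be recovered from $\mc S$ by taking the induced circular order'' and leaves the rest to the reader. You instead construct the isomorphism $h$ explicitly as the composition of $z\mapsto\sigma_z^{\mc C_1}$ with the inverse of $w\mapsto\sigma_w^{\mc C_2}$, and then verify each piece of the isomorphism definition: the intertwining identity (immediate), equivariance (via the commuting square of \Cref{def:uc}(1) plus uniqueness from minimality), and continuity (via compactness of $\mc C_2$ and the continuity in the circle variable coming from \Cref{def:uc}(2)). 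This fills in exactly the content the paper compresses into the circular-order remark, but it does so by topological rather than order-theoretic means, which is a genuine stylistic difference: your argument never mentions circular orders, only the point–set topology of the circle and the continuity of the structure maps. Both approaches work; yours is more self-contained, while the paper's is shorter because it implicitly leans on the same circular-order-completion machinery used to build the universal circles in the first place. One small economy you could take: once $h$ is a continuous bijection between circles, it is automatically a homeomorphism, so you need not repeat the continuity argument for $h^{-1}$.
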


\begin{proof}
	Let $\{\pi^i_\lambda\:\mid\:\lambda\in\mc L\}$ be the monotone maps for $\mc C_i$, and let $h\colon\mc C_1\rightarrow\mc C_2$ be the conjugating homeomorphism. Let $z\in \mc C^1$ determining a section $\sigma^1_z$. Then $\sigma^2_{h(z)}$ is a section of $\mc C_2$, and for all $\lambda\in\mc L$, we have  \[\sigma^1_z(\lambda)=\pi^1_\lambda(z)=\pi^2_\lambda\circ h(z)=\sigma^2_{h(z)}(\lambda).\] Thus $\sigma^1_z=\sigma^2_{h(z)}$.
	
	For the converse, note that a universal circle $\mc C$ is minimal if and only if the map $z\mapsto \sigma_z$ from $\mc C$ to the space of all sections of $E_\infty \to \mc L$ is injective. The image is the set $\mc S$ of sections of $\mc C$, and $\mc C$ can be recovered from $\mc S$ by taking the induced circular order. From this, the required statement follows.
\end{proof}

A weaker notion of equivalence is that of a conjugate group action -- part of the data of a universal circle is an action of $\pi_1(M)$ on a circle, and one could ask whether two universal circles give rise to conjugate actions. This is the question we address for $W^u$ in  \Cref{sec:nonconjugate}.

\subsection{Markers and the Calegari--Dunfield construction}
\label{sec:CD}
To describe the Calegari--Dunfield construction, we first need to define \emph{markers} in $E_\infty$. The basic idea behind markers is that they track directions in which leaves of $\mathcal{ \widetilde{F}}$ stay bounded distance as they go out to infinity.

The \defn{separation constant} for $\mc F$ is the largest number $\delta$ such that each leaf of $\mc F$ is quasi-isometrically embedded in its $\delta$-neighborhood (using the Riemannian metric $g$ as above). For any $\varepsilon<\delta$, and \defn{$\varepsilon$-marker} is an embedding $m\colon [0,1]\times [0,\infty)\to \wt M$ such that the following properties are satisfied.

\begin{enumerate}
	\item For each $t\in[0,1]$, $m(t,[0,\infty))$ is contained in a single leaf of $\widetilde{\mc F}$, and is a geodeisc ray in that leaf. These are called \defn{horizontal rays}.
	\item For each $x\in[0,\infty)$, $m([0,1],x)$ is a transversal of height less than $\varepsilon/3$.
\end{enumerate}

Since each horizontal ray in a marker is a geodesic in a leaf, it limits to a point on the boundary of that leaf. Thus, markers limit onto embedded intervals in $E_\infty$, and we will also call these \defn{markers}. A point in $E_\infty$ contained in a marker will be called a \defn{marker point}.

Markers in $E_\infty$ can be amalgamated by joining them at their endpoints, and doing this always yields an embedded interval transverse to the circle fibers \cite[Lemma 6.11]{CalDun_UC}.  Abusing notation, we call a maximal such amalgamation a \defn{maximal marker}, and consider it a marker (though it may be too tall to be a true marker). A key property of markers is that they cannot cross.

A section $s\colon\mc L\to E_\infty$ is called \defn{admissible} if it does not cross markers. Given a point $p\in E_\infty$, we define the \defn{leftmost section based at $p$}, denoted $s^\ell_p$, as follows. Let $\tau$ be a line in $\mc L$ such that $p\in E_\infty\mid_\tau$. In $E_\infty\mid_\tau$ and traveling up from $p$, $s^\ell_p$ is defined to be the section that is `leftmost' among all admissible sections through $p$. Traveling down from $p$ over $\tau$, $s^\ell_p$ is the section that is `rightmost' among admissible sections through $p$. This is made precise by approximating $s^\ell_p\mid_\tau$ by paths $\gamma_i$ that travel left above $p$ and right below, except when it would mean crossing a marker in the set $M_i$, where $M_i\subseteq M_{i+1}$, and $\cup_i M_i$ is the set of all markers. See \Cref{fig:approximating_slp} for the idea, and \cite{CalDun_UC} for details.

\begin{figure}[h]
	\centering
	\includegraphics[width=0.9\linewidth]{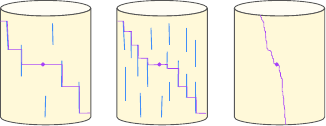}
	\caption{Approximating the leftmost section based at $p$. Markers are drawn in blue, and the highlighted point is $p$. On the left and in the middle are successive aproximations, and on the right is $s^\ell_p$, which looks something like a sideways graph of the Cantor function.}\label{fig:approximating_slp}
\end{figure}

To extend $s^\ell_p$ to the rest of $\mc L$, we use the `turning corners' rule. Suppose we've defined $s^\ell_p$ at a leaf $\lambda$, and need to extend it to a leaf $\mu$ nonseparated from $\lambda$. By Lemma 6.18 of \cite{CalDun_UC}, there is a canonical `monotone relation' between $\partial_\infty \lambda$ and $\partial_\infty \mu$. Effectively, this means that there are canonical points $p_{\lambda\mu} \in \partial_\infty \mu$ and $p_{\mu\lambda}\in \partial_\infty \lambda$ with the property that if $\{\lambda, \mu\} \subset C \subset \mc L$ is path connected and $s \colon C \to  E_\infty|_C$ is an admissible section, then $(1)$ $s(\mu) \neq p_{\lambda\mu}$ implies $s(\lambda) = p_{\mu\lambda}$, and similarly $(2)$ $s(\lambda) \neq p_{\mu\lambda}$ implies $s(\mu) = p_{\lambda\mu}$.

Returning to the definition of $s^\ell_p$, we set 
\[
s^\ell_p(\mu)= p_{\lambda\mu}
\] 
and then continue to define $s^\ell_p$ using the leftmost up, rightmost down rule from $s^\ell_p(\mu)$. Proceeding this way defines a section on all of $\mc L$.

\smallskip
Let 
\[
\mc S=\{s^\ell_p\;\mid\;p\in E_\infty\}.
\] 
The circular orders on $\partial_\infty\lambda$ over various $\lambda\in\mc L$ are compatible along the sections of $\mc S$, and they thereby induce 
a well-defined circular order on $\mc S$. Furthermore, the action by $\pi_1 (M)$ on $E_\infty$ gives an action on $\mc S$ that preserves the circular order. Completing the order and collapsing gaps as described in \Cref{sec:circle_background} yields a circle, $\mc C^\ell$, called the \defn{leftmost universal circle}, with an action by $\pi_1 M$. Each point in $\mc C^\ell$ gives us a section, where limit sections are defined by pointwise convergence. Monotone maps \[\pi_\lambda^\ell\colon\mc C^\ell\to\partial_\infty\lambda\] are defined by evaluating sections on circle fibers. Thus, $\mc C^\ell$ is a universal circle for $\mc F$ in the sense of \Cref{def:uc}.

To define the \defn{rightmost universal circle} $\mc C^r$, we run the same construction, but starting with \defn{rightmost sections} $s^r_p$ that are rightmost up and leftmost down.

We will call a universal circle \defn{admissible} if its sections do not cross markers. By construction, $\mc C^\ell$ and $\mc C^r$ are admissible.

\subsection{The flowspace universal circle}\label{subsec:flowspace_uc_background}

Now suppose that $\mc F$ is a foliation of $M$ that is transverse to a pseudo-Anosov flow $\psi$ with invariant foliations $W^{s/u}$. 
Then by \cite{LMT_UC}, the boundary of the orbit space of $\psi$ defines a universal circle for $\mc F$ in the sense of \Cref{def:uc}. We describe the basic idea of the construction below assuming the flow is Anosov, which is the case of interest to us. We will give a more detailed description specialized to our setting (where the foliation $\mc F$ is a tilted version of the unstable foliation $W^u$) in \Cref{sec:tilting_uc}.

As described in \Cref{sec:anosov_background}, Fenley showed that the action of $\pi_1(M)$ on $\orb$ extends continuously to an action on $\orb\cup\partial\orb$. Referring to the definition of a universal circle, we take as the circle and action the restricted circle action $ \pi_1(M) \curvearrowright \partial \orb$.
What remains is to construct the monotone maps $\{\pi_\lambda\colon\partial\orb\rightarrow \partial_\infty\lambda\:\mid\:\lambda\in\mc L\}$.

\begin{figure}[h]
	\centering
	\includegraphics[width=0.8\linewidth]{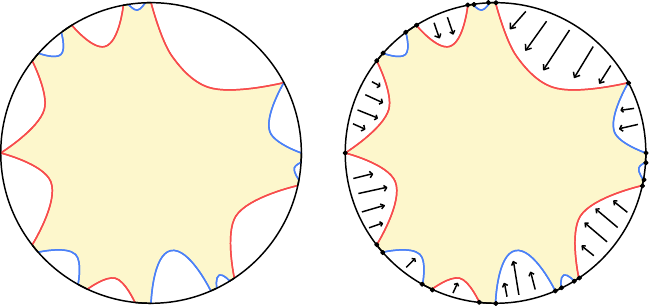}
	\caption{On the left: a typical shadow, in yellow. On the right: the map $h\colon\overline{\orb}\to\overline{\Omega}_\lambda$ used in the construction of the monotone map $\pi_\lambda\colon\partial\orb\rightarrow \partial_\infty\lambda$.}\label{fig:shadow_def}
\end{figure}

Let $\Theta\colon \wt M\to\orb$ denote projection to the flow space. 
A leaf $\lambda\in\mc L$ is a properly embedded plane in $\wt M$ intersecting each orbit of $\wt \phi$ at most once, hence the restriction $\Theta|_\lambda$ is an embedding onto $\lambda$'s \defn{shadow}:
 \[
 \Omega_{\lambda}=\Theta(\lambda)\subseteq \orb.
 \] 
Its frontier in $\orb$, denoted $\fr  \Omega_\lambda$ is a union of leaves of $\orb^{u/s}$ \cite{Fen09}. 
See \Cref{fig:shadow_def}. Let $\overline{\Omega}_\lambda$ be the closure of $\Omega_\lambda$ in $\overline{\orb}$. Then 
\[
\partial\overline{\Omega}_\lambda =\fr\Omega_\lambda\cup\partial_\infty\Omega_\lambda,
\]
 where $\partial_\infty\Omega_\lambda=\partial\orb\cap\partial\overline{\Omega}_\lambda$.

The leaf $\lambda$ has a pair of transverse foliations $\lambda^{s/u}$ by intersecting with $\wt W^{s/u}$, and $\Omega_\lambda$ has foliations $\Omega_\lambda^{s/u}$ by intersecting with $\orb^{s/u}$. Furthermore, the projection map $\Theta$ sends $\lambda^{s/u}$ to $\Omega_\lambda^{s/u}$. Mapping endpoints of rays in $\Omega_\lambda^{s/u}$ to endpoints of rays in $\lambda^{s/u}$ determines a circular order preserving map that can be uniquely extended to a monotone map $f \colon \partial \ol{\Omega}_\lambda \to \partial_\infty \lambda$, which collapses each leaf of $\fr  \Omega_\lambda$ to a single point of $\partial_\infty \lambda$ \cite[Section 3]{LMT_UC}.

To define the monotone map $\pi_\lambda\colon\partial\orb\to\partial_\infty\lambda$, we precompose $f$ with a map $h\colon\overline{\orb}\to\overline{\Omega}_\lambda$ that fixes $\partial_\infty\Omega_\lambda$, as in the right of \Cref{fig:shadow_def}. 

The action $\pi_1(M) \curvearrowright \partial \orb$ together with the monotone maps $\pi_\lambda\colon\partial\orb\rightarrow \partial_\infty\lambda$ determines a universal circle for $\mc F$. The relation between this universal circle and the one constructed in \Cref{sec:CD} is open in general, and we now turn to investigate the connection when $\mc F$ is the unstable foliation of an Anosov flow.

\section{Tilting, shadows, and the flow space boundary}\label{sec:tilting}

In this section, we isotope the foliation $W^u$ so that it becomes transverse to $\varphi$. We call this `tilted' foliation $W^u_\tilt$. A variation of this construction was first defined and studied by Fenley for $\RR$--covered foliations in \cite[Section 5]{fenley2005regulating}. Also see \cite[Section 5]{baik2024reconstruction} for a related construction in a more general setting. For us, it will be important to understand the shadows of leaves of $W^u_\tilt$ in terms of the original leaves of $W^u$, so we describe our `tilting' rather explicitly.

\subsection{Tilting $W^u$ to $W^u_\tilt$} \label{sec:tilting_fol}
To begin, recall that our flow $\varphi$ is fixed and in the form adapted to $W^u$. For the construction, we make the temporary switch to the orbit equivalent flow that is adapted to $W^s$. Since the stable/unstable foliations of these orbit equivalent flows are homeomorphic, we do not change our notation to make the distinction.  

Informally, we set $h_t$
to be the continuous flow on $M$ defined by moving each point along its leaf of $W^{ss}$ in the positive direction at unit speed. Here $W^{ss}$ is the strong stable foliation of $\varphi$ (i.e. the one-dimension foliation whose leaves consist of points that are asymptotic under the forward flow), the existence of which is guaranteed because the flow is adapted to $W^s$.
Then we will set $W^u_\tilt = h_1(W^u)$. 
To make this more precise (and to verify continuity) we work in the universal cover $\wt M$.

\smallskip
Recall that since $\varphi$ is adapted to $W^s$, each leaf $\lambda$ of the lifted foliation $\wt W^s$ is a copy of the hyperbolic plane with the lifted metric, and these metrics vary continuously.
Define $\wt h_t \colon \wt M \to \wt M$ to be the map defined on each leaf $\lambda$ by moving each point in $\lambda$ along the length $t$ arc of its strong stable leaf in the positive direction. We recall that the leaves of $\wt W^{ss}$ in $\lambda$ are precisely the horocycles of $\lambda$ based at the forward limit of orbits in $\lambda$. 
 
\begin{claim}\label{claim:strongstab}
The maps $\wt h_t \colon \wt M \to \wt M$ determine a continuous, equivariant flow. Hence, they descend to a continuous flow $h_t \colon M \to M$ that preserves the leaves of $W^s$ and has smooth orbits. 
\end{claim}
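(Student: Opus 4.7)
The plan is to verify the asserted properties of $\wt h_t$ in four steps: well-definedness together with the flow property, equivariance under the deck group, joint continuity in $(t,p)$, and then the descent and structural consequences on $M$. Throughout, we exploit that $\varphi$ is in good form adapted to $W^s$, so each leaf $\lambda$ of $\wt W^s$ carries a hyperbolic metric (the restriction of the lifted metric $\wt g$), orbits of $\wt \varphi$ in $\lambda$ are geodesics, and the strong stable foliation $\wt W^{ss}$ restricted to $\lambda$ consists of horocycles based at the forward endpoint of the orbits in $\lambda$.

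Given $p\in\wt M$, let $\lambda = \wt W^s(p)$ and let $H_p\subset\lambda$ be the horocycle (i.e.\ leaf of $\wt W^{ss}$) through $p$. Using the positive orientation on $\wt W^{ss}$ coming from the assumed orientability of $W^s$ and the flow direction of $\varphi$, parametrize $H_p$ by arc length $\gamma_p\colon \RR \to \lambda$ in the hyperbolic metric with $\gamma_p(0) = p$, and set $\wt h_t(p) = \gamma_p(t)$. Since each horocycle with its induced arc-length parametrization is isometric to $\RR$, we have $\gamma_p(t+s) = \gamma_{\gamma_p(t)}(s)$, which gives $\wt h_{t+s} = \wt h_s \circ \wt h_t$ and $\wt h_0 = \id$, establishing the flow property.

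For equivariance, note that every deck transformation $g$ of $\wt M$ is an isometry of $(\wt M, \wt g)$ that preserves $\wt W^s$, $\wt W^{ss}$ and their orientations (here we use that $W^s$ and $\varphi$ are oriented on $M$). Hence $g$ sends each horocycle $H_p$ to $H_{g(p)}$ by an orientation preserving isometry, so $g \circ \gamma_p = \gamma_{g(p)}$ and therefore $g \circ \wt h_t = \wt h_t \circ g$. This is what is needed for $\wt h_t$ to descend to a map $h_t\colon M\to M$ with the same flow property.

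The main obstacle is continuity. Continuity along a single leaf $\lambda$ is automatic from the smoothness of $H_p$ and of its arc-length parametrization in the hyperbolic metric on $\lambda$. The substance is \emph{transverse} continuity: if $p_n \to p$ with the $p_n$ lying on (possibly distinct) nearby leaves $\lambda_n$ of $\wt W^s$, we must show $\gamma_{p_n}(t) \to \gamma_p(t)$ uniformly for $t$ in compact sets. This follows from three facts provided by the adapted form: (i) the metric $\wt g$ is continuous on $\wt M$ and its restrictions to leaves of $\wt W^s$ are hyperbolic, so hyperbolic distances on $\lambda_n$ converge locally uniformly to those on $\lambda$; (ii) $\wt W^{ss}$ is a continuous foliation whose leaves on $\lambda_n$ are horocycles, and these horocycles vary continuously with $p$ because they are characterized as horocycles in $\lambda$ based at the forward endpoint of the $\wt\varphi$-orbit through $p$; (iii) an arc-length parametrized curve depends continuously on its starting point and the ambient metric. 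Combining (i)--(iii) and using that $\gamma_{p_n}([0,t])$ is a compact arc on $H_{p_n}$, standard comparison of nearby horocycles in nearby hyperbolic leaves gives $\gamma_{p_n}(t) \to \gamma_p(t)$, yielding joint continuity of $(t,p)\mapsto \wt h_t(p)$.

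Once continuity and equivariance are in hand, $\wt h_t$ descends to a continuous flow $h_t\colon M \to M$. By construction $\wt h_t$ moves points within leaves of $\wt W^s$, so $h_t$ preserves the leaves of $W^s$. The orbit of $h_t$ through a point of $M$ is exactly the image of the horocycle $H_p$, i.e.\ the strong stable leaf through that point; these are smooth curves since $W^{ss}$ consists of (projections of) smooth horocycles in the smooth hyperbolic leaves of $W^s$. This gives the claimed smoothness of orbits and completes the proof.
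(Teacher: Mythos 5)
Your proposal is correct and follows essentially the same route as the paper: both derive the flow from the unit-speed (arc-length) parametrization of the oriented horocyclic foliation $\wt W^{ss}$, and both reduce continuity to the continuity of the adapted leafwise hyperbolic metric and of the nonmarker endpoint $\theta_\lambda$ (equivalently, of the horocyclic foliation), citing the structure guaranteed by the good-form flow. The only cosmetic difference is that the paper orients $W^{ss}$ via the coorientation of $W^u$, whereas you do so via the orientation of $W^s$ together with the flow direction; both are available under the running hypotheses.
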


\begin{proof}
Since $W^u$ is cooriented, the leaves of $W^{ss}$ are oriented. Hence, the flow $h_t$ is obtained by parameterizing the oriented foliation $W^{ss}$ at unit speed.

Continuity follows from the properties of the adapted flow (established in \cite[Corollary 5.23]{barthelme2023collapsed}). First, the foliation $W^{ss}$ is continuous. Indeed, we have a continuous, leafwise hyperbolic metric $g$ on the leaves of $\wt W^s$, and in each leaf $\lambda$ of $\wt W^s$ the leaves of $\wt W^{ss}$ are 
horocycles based at the point $\theta_\lambda \in \partial_\infty \lambda$ that is the common forward endpoint of the orbits in $\lambda$. As the endpoint $\theta_\lambda$ varies continuously along leaves of $\wt W^s$ (as in \cite[Corollary 5.23]{barthelme2023collapsed}), so does the horocyclic foliation.

Finally, again using the continuity of $g$, the unit speed parameterization of the oriented leaves of $\wt W^{ss}$ defines a continuous flow of $\wt M$ that is smooth on each leaf of $\wt W^s$ (since the foliation by horocycles is smooth on the leaves of $\wt W^s$). 
\end{proof}

Finally, we let $h_t \colon M \to M$ be the induced flow on $M$, set 
$h = h_1$, and fix $W^u_\tilt=h(W^u)$. We call $W^u_\tilt$ the \defn{titled unstable foliation}.

\smallskip
The following description, which is clear from the construction, in particular shows that $W^u_\tilt$ is transverse to $\phi$. 

\begin{lemma} \label{lem:tilted_trans}
For each leaf $\lambda$ of $\wt W^s$, the map $\wt h_1$ preserves $\lambda$ and takes each geodesic orbit $\gamma$ in $\lambda \cap \wt W^u$ to a leaf $h(\gamma)$ of the foliation $\lambda \cap W^u_\tilt$ with the property that $h(\gamma)$ is the `positive side' of the boundary of the $1$--neighborhood of $\gamma$ in $\lambda$ (with respect to the orientation of $\lambda$ and its induced hyperbolic metric). 
\end{lemma}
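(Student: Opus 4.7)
The plan is to reduce the lemma to a two-dimensional statement inside each leaf of $\wt W^s$, where $\wt h_t$ restricts to the horocyclic flow. First I would check that $\wt h_1$ preserves every $\lambda\in\wt W^s$ setwise: this follows because the strong stable foliation $\wt W^{ss}$ subfoliates $\wt W^s$ (every $\wt W^{ss}$--leaf lies in a unique $\wt W^s$--leaf), while $\wt h_t$ is defined to move points along their $\wt W^{ss}$--leaves.

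Next I would identify $\wt h_t|_\lambda$ with the horocyclic flow of $\lambda$ based at $\theta_\lambda$. The adapted properties of $\phi$ (as used in the proof of Claim~\ref{claim:strongstab}) supply the needed hyperbolic structure: $\lambda$ is isometric to $\HH^2$ with its lifted metric, the $\wt\phi$--orbits in $\lambda$ are geodesics asymptotic to the common forward endpoint $\theta_\lambda\in\partial_\infty\lambda$, and the $\wt W^{ss}$--leaves in $\lambda$ are exactly the horocycles based at $\theta_\lambda$. Because $\wt h_t|_\lambda$ is the unit-speed, positively-oriented parameterization of these horocycles (with orientation determined by the coorientation of $W^u$), it coincides with the horocyclic flow.

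Given these identifications, the remainder is elementary hyperbolic geometry. For a geodesic orbit $\gamma=\lambda\cap\mu$ with $\mu\in\wt W^u$, I would describe $\wt h_1(\gamma)$ in the upper half-plane model with $\theta_\lambda=\infty$: if $\gamma$ is the $y$-axis, $\wt h_1(\gamma)$ is the Euclidean ray $\{(y,y):y>0\}$, a smooth curve lying at constant positive hyperbolic distance from $\gamma$ on the side prescribed by the coorientation of $W^u$. To recognize $\wt h_1(\gamma)$ as a leaf of $\lambda\cap W^u_\tilt$, I would use that $\wt h_1$ preserves $\lambda$ and $W^u_\tilt=h(W^u)$ by definition, so
\[
\wt h_1(\gamma)\;=\;\wt h_1(\lambda\cap\mu)\;=\;\lambda\cap \wt h_1(\mu),
\]
where $\wt h_1(\mu)$ is a leaf of the lifted tilted foliation. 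Transversality of $W^u_\tilt$ to $\phi$ then follows from the same leafwise picture, since orbits of $\wt\phi$ lie in $\wt W^s$--leaves and inside each $\lambda$ the geodesic foliation $\lambda\cap\wt W^u$ is transverse to its horocyclic translate $\lambda\cap W^u_\tilt$. I do not anticipate a serious obstacle; the only minor terminology point is that `the $1$-neighborhood of $\gamma$' should be read as the image under unit-time horocyclic flow, since the true hyperbolic distance from $\wt h_1(\gamma)$ to $\gamma$ is $\operatorname{arcsinh}(1)$ rather than exactly $1$.
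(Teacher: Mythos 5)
Your proof is correct, and since the paper simply declares the lemma ``clear from the construction,'' you are supplying the same argument with full details: the subfoliation $\wt W^{ss}\subset\wt W^s$ gives invariance of each $\wt W^s$--leaf, the adapted metric identifies $\wt h_t|_\lambda$ with the horocycle flow based at $\theta_\lambda$, and $\wt h_1(\lambda\cap\mu)=\lambda\cap\wt h_1(\mu)$ recognizes $\wt h_1(\gamma)$ as a leaf of $\lambda\cap W^u_\tilt$. Your parenthetical remark is also right: in the upper half-plane model with $\theta_\lambda=\infty$ and $\gamma$ the imaginary axis, unit arclength along the horocycle $y=c$ sends $(0,c)$ to $(c,c)$, so $\wt h_1(\gamma)=\{(y,y):y>0\}$ is an equidistant curve at hyperbolic distance $\operatorname{arcsinh}(1)=\ln(1+\sqrt 2)$ from $\gamma$, not distance $1$; the lemma's ``$1$--neighborhood'' is therefore shorthand for the equidistant curve produced by unit-time horocyclic flow. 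Since the remainder of the paper uses only the qualitative facts (transversality of $W^u_\tilt$ to $\varphi$ and bounded leafwise displacement), this imprecision is cosmetic and nothing downstream is affected.
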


See \Cref{fig:stableleaf}. 

We remark that $h \circ \phi_t \circ h^{-1}$ is a conjugate Anosov flow on $M$ whose stable foliation is $W^s$ and whose unstable foliation is $W^u_{\tilt}$. Its orbits, lifted to $\wt M$, are described exactly as in \Cref{lem:tilted_trans} and therefore uniformly fellow travel the orbits of $\wt \varphi$. 

\begin{figure}
	\centering
	\includegraphics[width=0.8\linewidth]{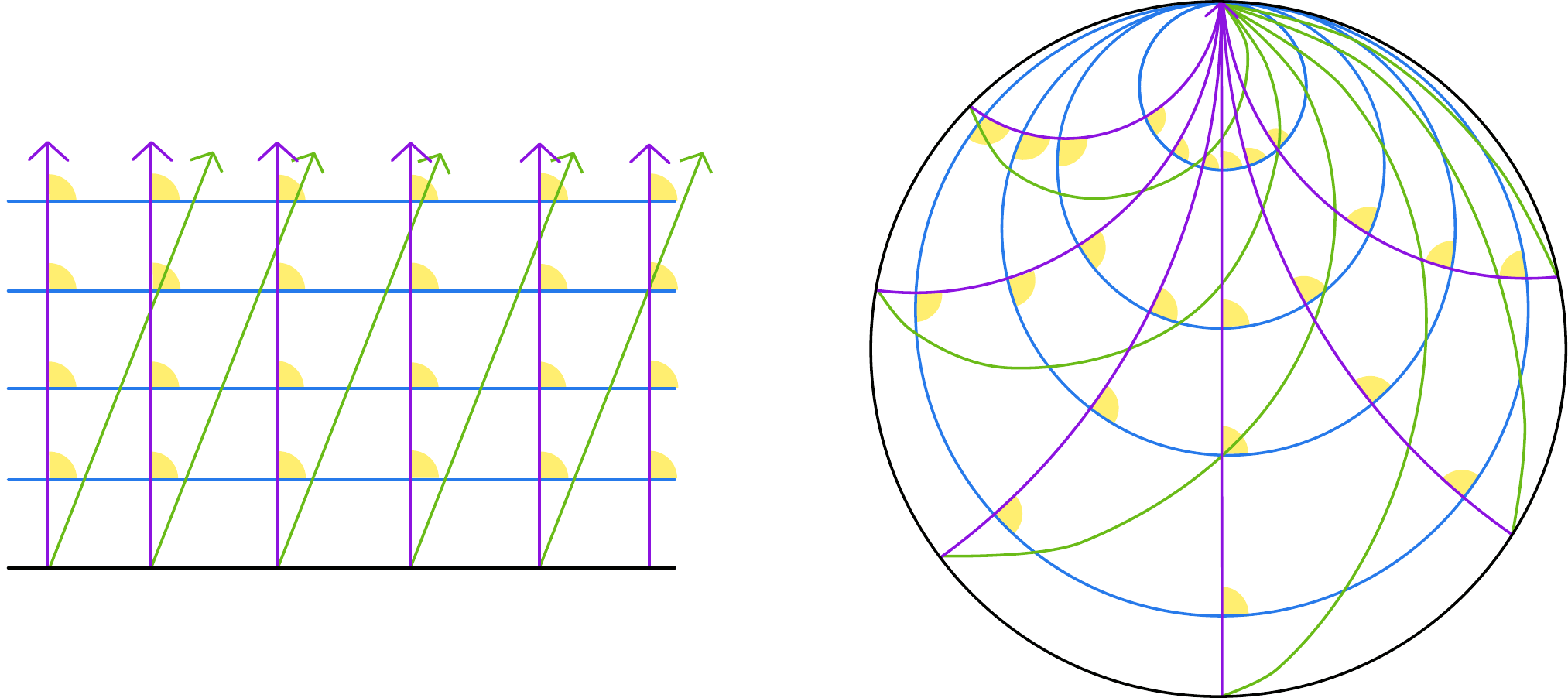}
	\caption{A lifted weak stable leaf $\lambda$ with flow lines of $\varphi$ in purple, the strong stable foliation in blue, and flow lines of the tilted flow $\varphi_{\tilt}$ in green. The yellow sectors contain $\frac{d}{dt}\varphi_{\tilt}$.}
	\label{fig:stableleaf}
\end{figure}

\begin{remark}[Tilting the other way]
The transverse foliation $W^u_\tilt = h(W^u)$ was produced using the flow $h_t \colon M \to M$ and setting $h = h_1$. In what follows, the choice of $1$ among $t>0$ is immaterial; this is because any two such foliations are isotopic through foliations \emph{transverse} to $\varphi$.

 However, we can also define the foliation $h_{-1}(W^u)$, which after reversing its coorientation is also positively transverse to $\varphi$. This foliation is \emph{not} isotopic to $W^u_\tilt$ through {transverse} foliations, and so it will have different properties (see e.g. \Cref{lem:leaf_shadow}). 
 
 In order to talk about both transverse foliations simultaneously, we  simplify the notation by setting $h^\pm = h_{\pm 1}$ and  $\wt h^\pm = \wt h_{\pm 1}$. Hence, in what follows, the lifts of the tilted foliations to $\wt M$ will be denoted $\wt h^\pm(\wt W^u)$.
\end{remark}

\subsection{Shadows of $W^u_\tilt$}
Now we describe the shadows of leaves of the tilted foliations $\wt h^\pm(\wt W^u)$, i.e. their projections to the flow space under the map $\Theta\colon\wt M\rightarrow\orb$. To set notation, for $\lambda$ a leaf of $\wt W^u$, we define
\[\Omega^\pm_\lambda=\Theta\circ\wt h^\pm(\lambda).\]
We will call $\Omega^+_\lambda$ the \defn{positive shadow} of $\lambda$ and $\Omega^-_\lambda$ the \defn{negative shadow}.

Let $\lambda$ be a leaf of $\wt W^u$. As $\wt h^\pm(\lambda)$ is foliated by flowlines of the tilted flow $\wt h^\pm\circ \wt\phi\circ (\wt h^\pm)^{-1}$, we first find the shadow of a single orbit of the tilted flow.

Given a point $p\in\orb$, we denote the leaf of $\orb^s$ through $p$ by $\orb^s(p)$. Given a set $X\subseteq\orb$, the \defn{positive stable saturation} of $X$ is
\[\orb^s_+(X)=\{q\in\orb^s(p)\text{ on the positive side of }p\:\mid\:p\in X\},\] where `on the positive side' is defined using the orientation on $\orb^s$. The \defn{negative stable saturation} is similarly defined as
\[\orb^s_-(X)=\{q\in\orb^s(p)\text{ on the negative side of }p\:\mid\:p\in X\}.\]

\begin{lemma} \label{lem:orbit_shadow}
	Let $\gamma$ be a flow line of $\varphi$ in $\lambda$. Then \[\Theta\circ\wt h^+(\gamma)=\orb^s_+(\Theta(\gamma))\] and
	\[\Theta\circ\wt h^-(\gamma)=\orb^s_-(\Theta(\gamma)).\]
\end{lemma}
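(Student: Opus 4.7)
The plan is to reduce to an explicit computation inside a single weak stable leaf. The key observation is that although $\lambda$ is a leaf of $\wt W^u$, the orbit $\gamma$ also sits inside a unique weak stable leaf $\mu$ of $\wt W^s$, and by \Cref{claim:strongstab} the flow $\wt h_t$ preserves each such leaf. Therefore $\wt h^+(\gamma) \subset \mu$, and
\[
\Theta(\wt h^+(\gamma)) \subset \Theta(\mu) = \orb^s(\Theta(\gamma)).
\]
With this containment in hand, what remains is to identify which side of $\Theta(\gamma)$ the image lands on, and to verify that every point of $\orb^s_+(\Theta(\gamma))$ is actually hit.

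To carry this out, I would pass to the upper half plane model of $\mu$, using the adaptedness of $\varphi$ to $W^s$ recalled in \Cref{sec:goodform}: orbits of $\wt\varphi$ in $\mu$ are geodesics with common forward endpoint $\theta_\mu \in \partial_\infty\mu$, and the leaves of $\wt W^{ss}|_\mu$ are horocycles based at $\theta_\mu$. Placing $\theta_\mu$ at $\infty$, orbits become vertical lines and strong stable leaves become horizontal lines; the metric $ds^2 = y^{-2}(dx^2+dy^2)$ makes the restriction of $\wt h^+$ to $\mu$ the simple shear $(x,y) \mapsto (x+y,y)$. Taking $\gamma = \{x=0\}$, the image $\wt h^+(\gamma) = \{(y,y) : y>0\}$ is transverse to every vertical orbit in $\mu$, meets each vertical line $\{x=c\}$ with $c>0$ at exactly one point $(c,c)$, and meets no vertical line with $c\leq 0$. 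Consequently $\Theta\circ \wt h^+(\gamma)$ is exactly the image under $\Theta$ of the orbits in $\mu$ lying on the positive (coorientation-of-$W^u$) side of $\gamma$.

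The remaining step, and in my view the most delicate one, is identifying this ``positive side of $\gamma$ in $\mu$'' with $\orb^s_+(\Theta(\gamma))$. This is a bookkeeping check rather than a geometric one: by the proof of \Cref{claim:strongstab}, the orientation of $W^{ss}$ used in defining $\wt h^+$ is induced by the coorientation of $W^u$, while the orientation on $\orb^s$ fixed at the end of \Cref{sec:anosov_background} is, via the orientation on $\orb$, compatible with the coorientation on $\orb^u$. Since both notions of positive side descend from the same coorientation on the unstable foliation, they agree, yielding $\Theta\circ\wt h^+(\gamma) = \orb^s_+(\Theta(\gamma))$. The case of $\wt h^-$ follows by the same argument after reversing direction along $W^{ss}$; in the upper half plane model the shear becomes $(x,y)\mapsto (x-y,y)$ and the same transversality and surjectivity arguments apply.
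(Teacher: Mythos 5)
Your proof is correct and takes essentially the same route as the paper's: both reduce to the weak stable leaf $\mu$ containing $\gamma$ and argue that $\wt h^+(\gamma)$ meets exactly the orbits on the positive side of $\gamma$ in $\mu$. The paper does this in one line by citing \Cref{lem:tilted_trans}, whereas you unwind that lemma into the explicit upper half-plane computation (the shear $(x,y)\mapsto(x+y,y)$), which is a welcome elaboration and also makes the transversality and ``exactly once'' assertions manifest. Your flagging of the orientation bookkeeping is fair; the paper leaves this implicit in the phrase ``positive side'' and in the conventions of \Cref{sec:anosov_background}, so your extra care there is a reasonable addition rather than a divergence.
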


\begin{proof}
	Let $\mu$ be the leaf of $\wt W^s$ containing $\gamma$. By \Cref{lem:tilted_trans}, we see that $\wt h^+(\gamma)$ intersects exactly the orbits of $\varphi$ lying within $\mu$ and on the positive side of $\gamma$. Similarly, $\wt h^+(\gamma)$ intersects exactly the orbits on the negative side of $\gamma$.
\end{proof}

As $\wt h^\pm(\lambda)$ is a union of flow lines of the tilted flow, we can combine their shadows to find the shadow $\Omega^\pm_\lambda$.

\begin{lemma}\label{lem:leaf_shadow}
	The positive and negative shadows of $\lambda$ are given by 
	\[\Omega^\pm_\lambda=\orb^s_\pm(\Theta(\lambda)).\]	
\end{lemma}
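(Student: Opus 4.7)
The plan is to derive this directly from the previous \Cref{lem:orbit_shadow} by decomposing $\lambda$ into its constituent flow lines of $\wt\varphi$ and using the fact that both sides of the claimed equality distribute over unions of orbits.

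First, I would record the structural observation that since $W^u$ is a weak unstable foliation, each of its leaves is saturated by orbits of $\varphi$; lifting, the leaf $\lambda$ of $\wt W^u$ is a disjoint union
\[
\lambda = \bigsqcup_{\gamma \subset \lambda} \gamma
\]
of orbits of $\wt\varphi$. In particular, applying $\wt h^\pm$ gives
\[
\wt h^\pm(\lambda) = \bigcup_{\gamma \subset \lambda} \wt h^\pm(\gamma),
\]
so
\[
\Omega^\pm_\lambda = \Theta(\wt h^\pm(\lambda)) = \bigcup_{\gamma \subset \lambda} \Theta\bigl(\wt h^\pm(\gamma)\bigr).
\]

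Next, I would apply \Cref{lem:orbit_shadow} term-by-term: for each orbit $\gamma \subset \lambda$, we have $\Theta(\wt h^\pm(\gamma)) = \orb^s_\pm(\Theta(\gamma))$, where $\Theta(\gamma)$ is a single point of $\orb$. Substituting,
\[
\Omega^\pm_\lambda = \bigcup_{\gamma \subset \lambda} \orb^s_\pm(\Theta(\gamma)).
\]
Finally, I would observe that by the very definition of the positive/negative stable saturation of a subset of $\orb$, the operator $\orb^s_\pm(\cdot)$ commutes with unions of points:
\[
\bigcup_{\gamma \subset \lambda} \orb^s_\pm(\Theta(\gamma)) = \orb^s_\pm\!\Bigl(\bigcup_{\gamma \subset \lambda} \Theta(\gamma)\Bigr) = \orb^s_\pm(\Theta(\lambda)),
\]
where the last equality uses that $\Theta(\lambda)$ is exactly the union of the points $\Theta(\gamma)$ as $\gamma$ ranges over orbits in $\lambda$ (equivalently, $\Theta|_\lambda$ is the quotient by the flow on $\lambda$).

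There is essentially no obstacle here; the lemma is a packaging of \Cref{lem:orbit_shadow} over the foliation of $\lambda$ by $\wt\varphi$-orbits. The only point to be careful about is confirming that $\lambda$ is genuinely $\wt\varphi$-invariant (so that it decomposes cleanly into orbits) and that the set-theoretic identity $\orb^s_\pm\bigl(\bigcup_\alpha X_\alpha\bigr) = \bigcup_\alpha \orb^s_\pm(X_\alpha)$ follows tautologically from the pointwise definition of $\orb^s_\pm$ given just above \Cref{lem:orbit_shadow}. Both are immediate from the setup.
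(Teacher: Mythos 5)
Your proposal is correct and matches the paper's argument exactly: decompose $\lambda$ into its $\wt\varphi$-orbits, apply \Cref{lem:orbit_shadow} orbitwise, and use that $\orb^s_\pm$ distributes over unions. You have simply spelled out the set-theoretic bookkeeping that the paper leaves implicit.
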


See \Cref{fig:shadow}.

\begin{figure}
	\centering
	\includegraphics[width=0.35\linewidth]{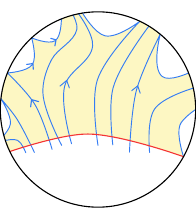}
	\caption{The red leaf is $\Theta(\lambda)$, and the yellow region is $\Omega^+_\lambda$. By the notation convention described below, $\Theta(\lambda)$ would be called $\ell$.}
	\label{fig:shadow}
\end{figure}

\begin{proof}
We have 
\[\Omega^\pm_\lambda=\{\Theta\circ\wt h^\pm(\gamma): \gamma \subseteq \lambda\text{ an orbit of }\varphi\},\] so combining the shadows described in \Cref{lem:leaf_shadow} produces the shadow described above.
\end{proof}

We set the following notational conventions that will be in use for the remainder of the paper. Greek letters such as $\lambda$ and $\mu$ will be used to represent leaves of $\widetilde{W}^u$ and $\widetilde{W}^s$, sometimes using superscripts of `$u$' and `$s$' respectively. Given a leaf $\lambda$ of $\widetilde{W}^{s/u}$, we will use the corresponding character in the Latin alphabet (in this case $\ell$) to denote $\Theta(\lambda)$, using the same subscript and superscript where applicable. For example, given a leaf $\lambda_i^u$ of $\widetilde{W}^u$, the frontier of $\Omega^\pm_{\lambda^u_i}$ always contains the leaf $\ell^u_i$ by \Cref{lem:leaf_shadow}.

\subsection{The flowspace universal circles for $W^u$} \label{sec:tilting_uc}
As a result of the tilting construction, we have that $\varphi$ is transverse to the tilted foliations $h^\pm(W^u)$. Thus, as in \Cref{subsec:flowspace_uc_background}, $\partial\orb$ gives us a universal circle for both $h^+(W^u)$ and $h^-(W^u)$.

However, each of these universal circles can be viewed as a universal circle for (the isotopic) foliation $W^u$ since $h^\pm$ induces a homeomorphism from $\mc L^u$ to the leaf spaces of $h^\pm(W^u)$ with the property that $\partial_\infty \lambda$ is identified with $\partial_\infty \wt h^\pm(\lambda)$, for each $\lambda \in \mc L$. This identification is canonical because $\wt h^\pm$ moves points bounded distance within leaves of $\wt W^s$. From this, it is easy to assemble monotone maps $\pi_\lambda^\pm \colon \partial \orb^\pm \to \partial_\infty \lambda$ so that 
$(\partial\orb^\pm,\{\pi^+_\lambda\})$ define universal circles for $W^u$. 

From now on, $\partial\orb^\pm$ will refer to these universal circles, i.e. the universal circles for $W^u$. Even though these universal circles are now associated to the same foliation, we keep the notation $\partial\orb^\pm$ to distinguish them. As we will show in \Cref{thm:distinct_ucs}, they are in fact nonisomorphic universal circles.  Given a point $z\in\partial\orb$, we denote the section of $E_\infty$ associated to $z$ by 
\[
\sigma^\pm_z\colon \mc L^u\to E_\infty,
\]
where $\sigma^\pm_z(\l)=\pi^\pm_\l(z)$.

We conclude by observing some properties of the monotone maps associated to these universal circles. We describe the situation for $\partial\orb^+$, but $\partial\orb^-$ is completely analogous. Let $\lambda$ be a leaf of $\wt W^u$. One frontier leaf of $\Omega^+_\lambda$ is the unstable leaf $\ell = \Theta(\lambda)$. The frontier chain containing $\ell$ also contains any leaves of $\orb^s$ making perfect fits with $\ell$ and on the positive side of $\ell$, plus possibly a nonseparated chain of leaves in $\orb^s$, as in \Cref{fig:shadow_gaps}.

The other frontier chains are chains of leaves in $\orb^s$, nonseparated from leaves of $\orb^s$ in $\Omega^+_\lambda$, and on the positive side of these leaves. Thus, $\ell$ is the only unstable leaf in $\fr\Omega^+_\lambda$.

\begin{figure}[h]
	\centering
	\includegraphics[width=0.8\linewidth]{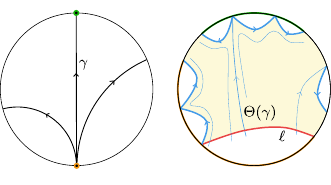}
	\caption{On the left is the leaf $\wt h^+(\lambda$), with the common backwards endpoint of its flow lines in orange, and the forward endpoint of a flowline $\gamma$ in green. On the right is $\orb$, with the shadow $\Omega^+_\lambda$ in yellow. Frontier chains are bolded, and the green and orange intervals in $\partial\orb$ are sent by 
	$\pi^+_\lambda$ to the respective points in $\partial_\infty \lambda$.} 
	\label{fig:shadow_gaps}
\end{figure}

The composition $\Theta\circ \wt h^+$ maps $\lambda$ homeomorphically to $\Omega^+_\lambda$. The foliation of $\lambda$ by flow lines is $\wt W^s \cap \lambda$ and maps to the foliation $\orb^s \cap \Omega^+_\lambda$. All leaves of $\orb^s\cap\Omega^+_\lambda$ limit onto $\ell$ on one side, so the frontier chain containing $\ell$ is mapped by $\pi^+_\lambda$ to the point $\theta_\lambda \in \partial_\infty \lambda$ that is the common backwards limit point of all flow lines in $\lambda$

All other frontier chains map to different points in $\partial_\infty \lambda$. A leaf $\ell^s$ of $\orb^s$ intersecting $\Omega^+_\lambda$ has a positive limit point $z$ in $\partial\orb$ contained in $\partial_\infty\Omega^+_\lambda$. This point is mapped to the forwards endpoint of the corresponding tilted flow line in $\lambda$, along with any leaves nonseparated from $\ell^s$ on the positive side.

To summarize this discussion:

\begin{lemma}\label{lem:gaps}
	The frontier chain of $\Omega^\pm_\lambda$ containing the unique unstable leaf $\ell$ is mapped to $\theta_\lambda\in\partial_\infty\lambda$ by $\pi^\pm_\lambda$, and all other frontier chains are mapped to forward limits of orbits. 
\end{lemma}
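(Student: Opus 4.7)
The proof plan is to formalize the discussion immediately preceding the lemma, reducing everything to the explicit description of the monotone map $\pi_\lambda^\pm$ as a composition of (i) the collapse $h\colon \overline{\orb}\to\overline{\Omega}^\pm_\lambda$ fixing $\partial_\infty \Omega^\pm_\lambda$ and (ii) the map $f\colon \partial \overline{\Omega}^\pm_\lambda \to \partial_\infty\lambda$ from \cite[Section 3]{LMT_UC} collapsing each frontier leaf to a point. I will work with $\partial\orb^+$; the case of $\partial\orb^-$ is symmetric. By construction, any point $z\in\partial\orb$ lying in a frontier chain $\mc K \subset \fr\Omega^+_\lambda$ is first sent by $h$ into the closure of $\mc K$ in $\overline{\Omega}^+_\lambda$, and then collapsed by $f$ to a single point of $\partial_\infty\lambda$; so proving the lemma amounts to identifying this point for each chain.

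Next I will unpack the geometry of $\fr\Omega^+_\lambda$ and of the homeomorphism $\Theta\circ \wt h^+\colon\lambda \to \Omega^+_\lambda$ arising from \Cref{lem:leaf_shadow}. Since $\Omega^+_\lambda = \orb^s_+(\ell)$ is a positive stable saturation of $\ell = \Theta(\lambda)$, its frontier consists of $\ell$ together with stable leaves of $\orb^s$ that are nonseparated from stable leaves meeting $\Omega^+_\lambda$; these form chains because nonseparated leaves in $\orb^s$ do so by the discussion in \Cref{sec:struct}. In particular $\ell$ is the only unstable leaf on the frontier, so there is a well-defined "chain containing $\ell$". Under $\Theta\circ \wt h^+$, the foliation of $\lambda$ by $\wt\phi$-orbits is carried to the foliation of $\Omega^+_\lambda$ by leaves of $\orb^s\cap\Omega^+_\lambda$ (via \Cref{lem:orbit_shadow}), and the tilted flow lines of $\wt h^+\phi(\wt h^+)^{-1}$ in $\wt h^+(\lambda)$ uniformly fellow-travel the orbits in $\lambda$, so they share endpoints under the canonical identification $\partial_\infty \wt h^+(\lambda)\cong \partial_\infty\lambda$.

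For the first assertion, every stable leaf of $\orb^s\cap \Omega^+_\lambda$ has $\ell$ as a limit on its negative side, and hence the positive ray of $\ell^s\in\orb^s\cap\Omega^+_\lambda$ in $\Omega^+_\lambda$ runs from $\ell$ out to $\partial_\infty\Omega^+_\lambda$. Pulling back through $\Theta\circ \wt h^+$ and reading $f$ by its defining property that it sends endpoints of stable rays in $\Omega^+_\lambda$ to endpoints of the corresponding tilted orbits in $\wt h^+(\lambda)$, the entire chain containing $\ell$ gets sent to the \emph{common} backward endpoint of all tilted orbits in $\wt h^+(\lambda)$. That common endpoint is $\theta_\lambda\in\partial_\infty\lambda$, as required. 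For any other frontier chain $\mc K'$, each leaf in $\mc K'$ is a stable leaf nonseparated from some $\ell^s\subset \Omega^+_\lambda$ on its positive side, so by \cite[Section 3]{LMT_UC} the map $f$ collapses $\mc K'$ to the forward endpoint of $\ell^s$, viewed as a stable ray in $\Omega^+_\lambda$. Under $\Theta\circ \wt h^+$ this is the forward endpoint of the tilted orbit in $\wt h^+(\lambda)$ corresponding to $\ell^s$, i.e. the forward endpoint of the corresponding $\wt\phi$-orbit in $\lambda$.

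The main point requiring a bit of care is the identification of the backward endpoint of the tilted orbit in $\wt h^+(\lambda)$ with $\theta_\lambda\in \partial_\infty\lambda$. This follows from the uniform fellow-traveling of the two flows noted after \Cref{lem:tilted_trans}, together with the canonical nature of $\partial_\infty\lambda\cong \partial_\infty \wt h^+(\lambda)$, but it is the one place in the argument where the tilting construction of \Cref{sec:tilting_fol} plays a nontrivial role; everything else is a direct bookkeeping exercise from the description of $f$ in \cite[Section 3]{LMT_UC} combined with \Cref{lem:leaf_shadow,lem:orbit_shadow}.
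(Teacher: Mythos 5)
Your proposal is correct and follows essentially the same route as the paper's own account. The paper states \Cref{lem:gaps} as a summary of the informal discussion immediately preceding it (describing the homeomorphism $\Theta\circ\wt h^\pm\colon\lambda\to\Omega^\pm_\lambda$, the image of the flow-line foliation, and the limiting behaviour of the rays of $\orb^s\cap\Omega^\pm_\lambda$); you formalize that discussion by unpacking $\pi^\pm_\lambda$ as $f\circ h$ and invoking \Cref{lem:orbit_shadow,lem:leaf_shadow} explicitly, which is exactly what the paper has in mind. The one thing you flag as needing care — that the common backward endpoint of the tilted orbits in $\wt h^\pm(\lambda)$ corresponds to $\theta_\lambda$ under the canonical identification $\partial_\infty\lambda\cong\partial_\infty\wt h^\pm(\lambda)$ — is indeed the point the paper resolves with the bounded fellow-travelling noted after \Cref{lem:tilted_trans}, so no gap there. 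One minor wording slip: early on you speak of points of $\partial\orb$ ``lying in a frontier chain $\mc K\subset\fr\Omega^+_\lambda$''; what you mean (and what you use) is that such points are sent by $h$ into the closure of $\mc K$, since the frontier itself lives in $\orb$, not $\partial\orb$. This does not affect the argument.
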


\section{Parameterizing the circle bundle at infinity}
\label{sec:para}
To understand the leftmost and rightmost universal circles for $W^u$, we need a much better understanding of the structure of $E_\infty$, and in particular what the set of markers looks like. In this section, we describe how to obtain $E_\infty$ by gluing together a copy of $\orb$ with a copy of $\mc L^u$ in a way that captures the set of markers as well as the $\pi_1(M)$ action.

\subsection{Markers in $E_\infty$}
\label{sec:markers}
For a leaf $\lambda$ of $\wt W^u$, let $\theta_\lambda \in \partial_\infty \lambda$ be the common backward endpoint of all orbits in $\lambda$. The function $\mc L \to E_\infty$ mapping $\lambda$ to $\theta_\lambda$ is known to be continuous (as in the proof of \Cref{claim:strongstab}), but this will also follow from observations below.

\begin{lemma}\label{lem:stab_marker}
	Leaves of $\wt W^s$ define $\epsilon$--markers for $E_\infty$, for every $\epsilon >0$. Moreover, these markers pass through every point of $\partial_\infty \lambda \ssm \theta_\lambda$ for $\lambda \in \mc L^u$.
\end{lemma}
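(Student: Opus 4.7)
The plan is to construct markers for $\wt W^u$ directly from stable leaves by flowing short transversals forward under $\wt\varphi$. Given a leaf $\mu$ of $\wt W^s$, I would fix a short arc $\tau \subset \mu$ transverse to $\wt\varphi$ of ambient length less than $\epsilon/3$, parametrized by $t \in [0,1]$, and define $m(t,x) = \wt\varphi_x(\tau(t))$, reparametrizing so $m(t,\cdot)$ has unit speed in the leafwise hyperbolic metric if desired. The main claim is that for $\tau$ chosen sufficiently short, $m$ is an $\epsilon$-marker in the sense of \Cref{sec:CD}.

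Verifying the definition has two parts. For the horizontal rays: each $m(t,\cdot)$ is the forward orbit of $\tau(t)$, which is a geodesic ray in its leaf of $\wt W^u$ precisely because $\varphi$ is adapted to $W^u$ (see \Cref{sec:goodform}). For the transversals: for fixed $x$, the slice $\{m(t,x) : t\in[0,1]\}$ equals $\wt\varphi_x(\tau)$, which lies in the $\wt\varphi$-invariant leaf $\mu$ and is transverse to $\wt W^u$, since $\tau$ is transverse to $\wt\varphi$ and distinct orbits in $\mu$ lie in distinct leaves of $\wt W^u$ (orbits being the intersections $\wt W^s \cap \wt W^u$).

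The step I expect to be the main technical point is the uniform height bound. At $x=0$ the bound is immediate from the choice of $\tau$, and as $x \to \infty$ the ambient distance between $\wt\varphi_x(\tau(t))$ and $\wt\varphi_x(\tau(t'))$ tends to zero by the forward-asymptotic behavior built into the definition of the stable foliation (up to standard reparametrization). To ensure the transversal height stays below $\epsilon/3$ for \emph{every} $x \geq 0$, I would use a continuity argument for intermediate $x$ together with a flow-box style control: shrink $\tau$ if necessary and, if the transverse length could grow transiently before contracting, compose with $\wt\varphi_{x_0}$ for a suitable $x_0 \geq 0$ and restart the parametrization of the marker at that point.

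For the ``moreover'' claim, each leaf $\lambda \in \mc L^u$ is isometric to $\HH^2$ and its orbits of $\wt\varphi$ are geodesics sharing the common backwards endpoint $\theta_\lambda$; together they therefore realize the foliation of $\lambda$ by geodesics emanating from $\theta_\lambda$. In particular, every $\xi \in \partial_\infty\lambda \ssm \{\theta_\lambda\}$ is the forward endpoint of a unique orbit $\gamma \subset \lambda$. Taking $\mu$ to be the leaf of $\wt W^s$ containing $\gamma$ and $\tau$ a short transversal through $\gamma$ in $\mu$, the marker produced above has $\xi$ as the endpoint of the horizontal ray $m(t_\gamma, \cdot)$ in $\partial_\infty\lambda$, which gives the desired statement.
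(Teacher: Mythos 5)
Your proof is correct and follows essentially the same approach as the paper: flow a short transversal in a stable leaf forward under $\wt\varphi$, use the adapted-flow property to make the horizontal rays geodesic in leaves of $\wt W^u$ and stable contraction to bound transversal height, and observe that every point of $\partial_\infty\lambda\ssm\{\theta_\lambda\}$ is the forward endpoint of an orbit in $\lambda$. The paper just asserts the height is decreasing via contracting stable holonomy, whereas you add a shrinking/compactness argument to handle possible transient growth — a harmless and reasonable refinement.
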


\begin{proof}
Pick $\eta \in \partial_\infty \lambda \ssm \theta_\lambda$. Then $\eta$ is the forward endpoint of an orbit $\gamma$ in $\lambda$ and we take $\mu$ to be the leaf of $\wt W^s$ such that $\lambda \cap \mu = \gamma$. Let $\tau$ be a small arc of $\mu$ crossing $\gamma$ and let $m$ the positive saturation of $\tau$ under the flow. More precisely, $m \colon \tau \times [0,\infty) \to \wt M$ is the embedding given by $(x, t) \mapsto \phi_t(x)$.
The map $m$ is a marker: for each $x \in \tau$, $m(x,[0,\infty))$ is a geodesic ray in a leaf of $\wt W^u$ since the flow is adapted to $\wt W^u$, and the height of each transversal is decreasing since $m$ consists of forward orbits in a stable leaf (where the holonomy is contracting). This completes the proof.
\end{proof}

\begin{lemma}\label{lem:nonmarker}
	There is an $\epsilon>0$ so that for each leaf $\l$ of $\wt W^u$, $\theta_\l$ is the unique point in $\partial_\infty\l$ that is not in the image of an $\epsilon$--marker, and this point varies continuously in $\mc L^u$. 
\end{lemma}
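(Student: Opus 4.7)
The lemma has two parts: (i) the existence of a uniform $\epsilon>0$ making $\theta_\l$ the unique point of $\partial_\infty\l$ outside all $\epsilon$-markers, and (ii) continuity of $\l\mapsto\theta_\l$ on $\mc L^u$. The plan is to treat these separately, handling continuity first since it is immediate from the adapted form of $\varphi$.

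For (ii), since $\varphi$ is adapted to $W^u$, the orbits of $\wt\varphi$ in each leaf $\l$ are geodesics of the leafwise hyperbolic metric that share the common backward endpoint $\theta_\l$. Thus $\theta_\l$ is the endpoint in $\partial_\infty\l$ of the geodesic ray out of any $p\in\l$ with tangent vector $-v(p)$, where $v$ generates $\wt\varphi$. Fixing any transversal $\tau$ to $\wt W^u$, the defining homeomorphism $\text{UT}\wt W^u|_\tau\to E_\infty|_\tau$ used to topologize $E_\infty$ identifies $s_{\bf nm}|_\tau$ with the map $p\mapsto(p,-v(p))$, which is continuous because $v$ is a continuous vector field and $\wt W^u$ is a continuous foliation. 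Continuity on each such $\tau$ gives continuity of $s_{\bf nm}$ on $\mc L^u$.

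For (i), by the previous lemma every point of $\partial_\infty\l\ssm\{\theta_\l\}$ already lies in an $\epsilon$-marker for every $\epsilon>0$, so it suffices to exhibit $\epsilon>0$, uniform in $\l$, such that no $\epsilon$-marker contains $\theta_\l$. I would argue by contradiction. Suppose $m\colon[0,1]\times[0,\infty)\to\wt M$ is an $\epsilon$-marker with the ray $m(0,\cdot)$ ending at $\theta_\l$. Since this ray is a geodesic in $\l$ from $p:=m(0,0)$ to $\theta_\l$, and orbits of $\wt\varphi$ in $\l$ are geodesics also ending at $\theta_\l$, uniqueness of hyperbolic geodesic rays with given endpoints forces $m(0,\cdot)$ to be, up to arc-length reparametrization, the backward orbit $\gamma$ through $p$. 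For any $t>0$, the point $q_t:=m(t,0)$ lies in a distinct unstable leaf $\l_t$. Using the local product structure of the Anosov flow (after passing to an orbit-equivalent smooth Anosov flow via Shannon's theorem, cited in the background), the strong stable leaf $\wt W^{ss}(p)$ meets $\l_t$ in a unique nearby point $r_t\neq p$, and by flow-invariance $\wt\varphi_{-\tau}(r_t)\in\l_t\cap\wt W^{ss}(\gamma(-\tau))$ for all $\tau$.

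Now invoke Anosov expansion of $\wt W^{ss}$ under backward flow: by compactness of $M$ there are uniform constants $c,\beta>0$ with $d(\wt\varphi_{-\tau}(p),\wt\varphi_{-\tau}(r_t))\geq c e^{\beta\tau}d(p,r_t)$. Choose $\epsilon$ less than the separation constant for $\wt W^u$ and small enough that $\epsilon/3$ fits inside a uniform local product chart. In the chart of radius $\epsilon/3$ around $\gamma(-\tau)$, the leaf $\l_t$ intersects the chart only through its single $\wt W^{ss}$-slice containing $\wt\varphi_{-\tau}(r_t)$; once $c e^{\beta\tau}d(p,r_t)>\epsilon/3$, this slice exits the chart and $\l_t\cap B(\gamma(-\tau),\epsilon/3)=\emptyset$. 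This contradicts the requirement $m(t,s)\in\l_t\cap B(m(0,s),\epsilon/3)$ for all arbitrarily large arc lengths $s$, and the uniformity of $c,\beta$ gives a uniform $\epsilon$. The main technical obstacle is arranging these Anosov estimates uniformly for a topological Anosov flow; the cleanest route is the orbit-equivalence reduction to a smooth model, since $\mc L^u$, $E_\infty$, and the marker structure depend only on the underlying topological data.
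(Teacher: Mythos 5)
Your proposal takes a genuinely different route from the paper's proof, and the core idea—tracking the separation of $\lambda$ from the nearby leaves $\lambda_t$ along the strong stable direction as you flow backward—is sound and in the same spirit as the paper's argument, but organized quite differently. The paper instead divides into two cases according to whether the boundary arc $\partial m$ of the putative marker through $\theta_\lambda$ consists entirely of nonmarker points or contains an open stable arc: in the first case it derives a contradiction from the quasi-isometric embedding of the stable leaf $\nu$ in its $\epsilon$-neighborhood, and in the second case it uses a cleaner geometric contradiction (a leafwise geodesic cannot have the same forward and backward endpoint). Your single uniform argument avoids the case split, which is a real simplification. However, there are two places that need more care.

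First, for the continuity of $\lambda\mapsto\theta_\lambda$, you invoke ``$v$ is a continuous vector field.'' For a topological Anosov flow, even after adapting it to $W^u$, the flow is not generated by a continuous vector field; the orbits are geodesics within each smooth leaf, but that the resulting leafwise tangent direction varies continuously transversally is exactly the content of \cite[Corollary 5.23]{barthelme2023collapsed}, which the paper cites in Claim~\ref{claim:strongstab} for precisely this point (and the paper's own proof of \Cref{lem:nonmarker} sidesteps the issue entirely by realizing $s_{\bf nm}$ locally as a section $\sigma_x$ of the universal circle $\partial\orb$). As written, your continuity argument is circular: the continuity of $v$ would itself follow from the continuity of $\theta_\lambda$. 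You should either cite the corollary directly or argue via the continuity of the flow map together with the continuity of the leafwise exponential map.

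Second, the passage to a smooth Anosov model to obtain the exponential estimate $d(\wt\varphi_{-\tau}(p),\wt\varphi_{-\tau}(r_t))\geq c e^{\beta\tau}d(p,r_t)$ is delicate, because the $\epsilon$-marker structure is defined relative to the Candel/BFP leafwise hyperbolic metric on the original topological flow, and an orbit equivalence does not transport this metric. (The set of \emph{maximal} markers is a topological invariant of the foliation, but a uniform quantitative $\epsilon$ is metric-dependent, and the lemma asserts a uniform $\epsilon$.) Fortunately you do not need the exponential rate: the second clause in the definition of a topological Anosov flow, together with properness of the strong stable leaves in the universal cover, already gives that $d(\wt\varphi_{-\tau}(p),\wt\varphi_{-\tau}(r_t))\to\infty$, and mere divergence is enough to force $\lambda_t$ out of the $\epsilon/3$-tube around $\gamma$, which is all your contradiction requires. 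Replacing the exponential estimate with divergence, and staying in the category the paper works in, would close this gap without weakening the conclusion.
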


\begin{proof}
We first show that $\theta_\lambda$ is not in the image of a marker. More precisely, let $\epsilon$ be smaller than the separation constant of $\wt W^s$ and have the property that if $p\in \lambda$ is distance at most $\epsilon$ from some leaf $\lambda'$ of $\wt W^u$, then the leaf $\nu$ of $\wt W^s$ through $p$ also intersects $\lambda'$. We show that $\theta_\lambda$ is not in the image of an $\epsilon/2$--marker.

Suppose not, and let $m$ be an $\epsilon/2$--marker through $\theta_\lambda$ based at an geodesic ray $\gamma$ in $\lambda$. 
Note that $\gamma$ is also a (backwards) orbit in $\lambda$. 
The marker $m$ determines an arc $\partial m\subseteq E_\infty$ (also called a marker) that does not cross the markers coming from the stable foliation, as in \Cref{lem:stab_marker}. Here, we essentially aim to prove that all markers come from the stable foliation, but for the purposes of this proof, we call such markers in $E_\infty$ \defn{stable arcs}.

Since $\partial m$ does not cross any stable arcs, and such arcs pass through all points that are not of the form $\theta_\lambda$ by \Cref{lem:stab_marker}, then up to replacing $m$ with a smaller maker and possibly replacing $\lambda$, either $\partial m$ consists entirely of points of the form $\theta_\lambda$ or there is an open stable arc $a$ that agrees with $\partial m \ssm \theta_\lambda$. 

In the first case, let $\nu$ be the leaf of $\wt W^s$ whose intersection with $\lambda$ contains backwards orbit $\gamma$. Note that the backwards orbit $\gamma'$ obtained by intersecting $\nu$ with any leaf $\lambda'$ has its endpoint at $\theta_{\lambda'}$. Hence, the arc $a$ determined by $\nu$ (starting at $\theta_\lambda)$ agrees with the marker $\partial m$. This implies that, for each leaf $\lambda'$ sufficiently close to $\lambda$ and meeting $m$, the geodesic rays $m \cap \lambda'$ and $\nu \cap \lambda'$ are asymptotic. Hence, up to shortening the maker $m$, we can suppose that they have distance less than $\epsilon/4$ in each such $\lambda'$. By the Anosov dynamics, the distance \emph{in} the leaf $\nu$ between the backward orbits $\nu \cap \lambda$ and $\nu \cap \lambda'$ goes to $\infty$. On the other hand, in $\wt M$, one can pass from $\nu  \cap \lambda'$ to $m \cap \lambda'$, then to $\lambda$ along $m$, then back to $\nu$ in $\lambda$. This shows that the distance between the orbits $\nu \cap \lambda'$ and $\nu \cap \lambda$ is at most $\epsilon$ in $\wt M$, contradicting that $\nu$ is quasi-isometrically embedded in its $\epsilon$ neighborhood.

In the second case, let $\nu$ be the leaf of $\wt W^s$ that induces a marker containing $a$. Note that since we are assuming that $a$ is a marker, the points in $a$ correspond to forward orbits in $\nu \cap \lambda'$.
Since $a$ agrees with $\partial m$, and $m$ is an $\epsilon/2$--marker, $\nu$ must also intersect $\lambda$ and by continuity, we must have that $\lambda \cap \nu$ has its forward orbit $\theta_\lambda$. But we also know that the backward orbit of $\lambda \cap \nu$ is also $\theta_\lambda$, contradicting that the orbit is a geodesic in $\lambda$.

It only remains to prove that $\lambda \mapsto \theta_\lambda$ is continuous. For this, fix $\lambda_0 \in \mc L$ and consider $x \in \partial \orb$ that is contained in the interior of the gap 
$\pi_{\lambda_0}^{-1}(\theta_{\lambda_0})$ with the property that $x$ is the endpoint of a leaf of $\orb^s$ that crosses $\Theta(\lambda_0)$.
Then $x$ is also contained in the gap $\pi_{\lambda}^{-1}(\theta_{\lambda})$ for leaves $\lambda$ sufficient close to $\lambda_0$. Hence $\lambda \mapsto \theta_\lambda$ agrees with the (continuous) section $\sigma_{x}$ near $\lambda_0$. Since $\lambda_0$ was arbitrary, this proves the required continuity.
\end{proof}

With this in mind, we define the \defn{nonmarker section} $s_\textbf{nm}\colon \mc L\rightarrow E_\infty$ by \[s_\textbf{nm}(\l)=\theta_\l.\] 

\subsection{The stitching map $\Phi \colon \orb \to E_\infty$}
In this section, we introduce the key technical tool to understand the structure of markers on $E_\infty$; an identification of $E_\infty \ssm s_{\rm{nm}}$ with the flow space $\orb$ itself.

For this, we define the \defn{stitching map} $\Phi \colon \orb \to E_\infty$ as follows: First, let $\wt \Phi \colon \wt M \to E_\infty$ be the map that associates each $p \in \wt M$ contained in a leaf $\lambda$ of $\wt W^u$ to the endpoint in $\partial_\infty \lambda$ of the ray $(\phi_t(p))_{t\ge0} \subset \lambda$. This map is clearly equivariant and constant on orbits of $\varphi$, and we let $\Phi$ denote the induced map on $\orb$. 

\begin{theorem}\label{thm:stitching}
The stitching map $\Phi$ is onto the complement of the nonmarker section and induces an equivariant homeomorphism $\Phi \colon \orb \to E_\infty \ssm s_{\textbf{nm}}$. 

Further, $\Phi$ identifies the stable foliation $\orb^s$ of $\orb$ with the foliation of $E_\infty \ssm s_{\textbf{nm}}$ by markers and identifies the unstable foliation $\orb^u$ of $\orb$ with the foliation of $E_\infty \ssm s_{\textbf{nm}}$ by open arcs $\partial_\infty \lambda \ssm \theta_\lambda$ for $\lambda \in \mc L^u$.
\end{theorem}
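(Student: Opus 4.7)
The plan is to verify in turn that $\Phi$ is a bijection onto $E_\infty \ssm s_{\textbf{nm}}$, is continuous, and is open, and then to extract the foliation identifications from the construction. Equivariance is immediate from the equivariance of $\wt\Phi$. Bijectivity follows from the flow being adapted to $W^u$: orbits of $\wt\phi$ in a leaf $\lambda$ of $\wt W^u$ are complete geodesics in $\lambda$ all sharing the backward endpoint $\theta_\lambda$, and since a hyperbolic geodesic is determined by its pair of ideal endpoints, the forward-endpoint map is a bijection from orbits in $\lambda$ to $\partial_\infty \lambda \ssm \theta_\lambda$. Assembling over all $\lambda \in \mc L^u$, and using that $\wt\Phi$ is constant on $\wt\phi$-orbits, this descends to a bijection $\Phi \colon \orb \to E_\infty \ssm s_{\textbf{nm}}$.

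For continuity, recall that the topology on $E_\infty$ is defined so that, for each transversal $\tau$ to $\wt W^u$, the map $\text{UT}\wt W^u|_\tau \to E_\infty|_\tau$ sending $(p, v)$ to the forward endpoint in $\partial_\infty \wt W^u(p)$ of the geodesic through $p$ with tangent $v$ is a homeomorphism. Gluing over $\tau$'s yields a continuous global map $\text{UT}\wt W^u \to E_\infty$. The stitching map factors as $p \mapsto (p, v(p))$ followed by this endpoint map, where $v(p)$ is the unit tangent to $\wt\phi$ at $p$. Since $\varphi$ is adapted to $W^u$ and continuous, $v$ is a continuous section of $\text{UT}\wt W^u$, so $\wt\Phi$ is continuous, and hence so is $\Phi$.

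The main obstacle is showing $\Phi$ is open, which is subtle because $E_\infty$ is not a manifold when $W^u$ is non-$\R$-covered. The strategy is to work locally with 2-dimensional transversals to the flow. Fix $p \in \wt M$ and a small 2-disk $D \subset \wt M$ of product form $D = \tau \times \alpha$, where $\tau$ is a short transversal to $\wt W^u$ at $p$ and $\alpha$ is a short arc in the leaf $\wt W^u(p)$ transverse to the orbit through $p$. Shrinking $\tau$ and $\alpha$ as needed, $D$ is transverse to $\wt\phi$ and meets each orbit of $\wt\phi$ in at most one point, so $\wt\Phi|_D$ is injective; moreover, every leaf of $\wt W^u$ meeting $D$ also meets $\tau$, so $\wt\Phi(D) \subset E_\infty|_\tau$. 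Then $\wt\Phi|_D$ is a continuous injection from an open $2$-disk into the $2$-manifold $E_\infty|_\tau \cong \tau \times S^1$, and invariance of domain implies that $\wt\Phi(D)$ is open in $E_\infty|_\tau$, and hence in $E_\infty$. Since $\Theta \colon \wt M \to \orb$ is an open quotient and every open subset of $\orb$ is a union of sets of the form $\Theta(D)$ for such flow-transverse disks $D$, we conclude that $\Phi$ is open and therefore a homeomorphism.

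The foliation identifications then follow directly from the bijection. Each leaf of $\orb^u$ has the form $\Theta(\lambda)$ for $\lambda \in \mc L^u$, and its $\Phi$-image equals $\wt\Phi(\lambda) = \partial_\infty \lambda \ssm \theta_\lambda$. Each leaf of $\orb^s$ has the form $\Theta(\nu)$ for $\nu$ a leaf of $\wt W^s$, and its $\Phi$-image is precisely the marker of $E_\infty$ induced by $\nu$ as in \Cref{lem:stab_marker}: for each $\lambda$ with $\nu \cap \lambda \ne \emptyset$, the intersection $\nu \cap \lambda$ is a single orbit whose forward endpoint in $\partial_\infty \lambda$ is a marker point on the stable-leaf marker. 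By the proof of \Cref{lem:nonmarker}, every marker point of $E_\infty$ arises in this way, so $\Phi$ identifies $\orb^s$ with the foliation of $E_\infty \ssm s_{\textbf{nm}}$ by markers.
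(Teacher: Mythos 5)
Your proof is correct, but it takes a genuinely different route from the paper in establishing that $\Phi$ is a homeomorphism. The paper never explicitly proves continuity of $\Phi$; instead, after observing that $\Phi$ is bijective and restricts to homeomorphisms along leaves of $\orb^s$ and $\orb^u$, it shows $\Phi$ is open by an argument with foliation boxes: it maps the boundary of a small box $U \subset \mc S^u(m)$ to a nullhomotopic closed curve in the cylinder $E_\infty|_m$ and argues, via leafwise continuity and the fact that the image avoids $s_{\mathbf{nm}}$, that $\Phi(U)$ is exactly the bounded interior region. Invariance of domain is then applied to $\Phi^{-1}$ to upgrade the open bijection to a homeomorphism. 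You instead argue continuity of $\wt\Phi$ directly by factoring it through the unit tangent bundle $\text{UT}\wt W^u$, and then obtain openness by applying invariance of domain directly to $\wt\Phi$ restricted to small flow-transverse disks $D \subset \wt M$ sitting over a transversal $\tau$, noting that $\wt\Phi(D)$ lands in the cylinder $E_\infty|_\tau$. This is arguably cleaner and more local; the trade-off is that your route depends on the continuity of $\wt\Phi$, whereas the paper's openness argument needs only leafwise continuity. Your continuity step leans on the unstated claims that the unit tangent field $v$ to the adapted flow is a continuous section of $\text{UT}\wt W^u$ and that the global endpoint map $\text{UT}\wt W^u \to E_\infty$ (not merely its restrictions over transversals) is continuous; both are true in the Candel/adapted setting, but a careful write-up should at least reference the continuity of the leafwise hyperbolic metric and of $\lambda \mapsto \theta_\lambda$ (\Cref{lem:nonmarker}) to justify them. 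The bijectivity and foliation-identification steps agree with the paper's in substance.
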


\begin{proof}
For each unstable leaf $\lambda$, the only point of $\partial_\infty \lambda$ that is not the endpoint of a forward orbit of the flow is the unique nonmarker point. This shows that $\Phi \colon \orb \to E_\infty \ssm s_{\textbf{nm}}$ is surjective. Since no two forward orbits in $\lambda$ have the same endpoint in $\partial_\infty \lambda$, we also see that $\Phi$ is injective. Moreover, it follows that if $\lambda \in \mc L^u$ with $\ell=\Theta(\l)$, then $\Phi$ maps $\ell$ homomorphically onto $\partial_\infty \lambda \ssm \theta_\lambda$.

Next we show that $\Phi$ takes the stable foliation $\orb^s$ to the foliation of $E_\infty \ssm s_{\textbf{nm}}$ by markers. Fix a leaf $\mu$ of the stable foliation $\wt W^s$ and let $m$ denote its projection to a leaf of $\orb^s$. Unpacking the definitions, the image $\Phi(m)$ is obtained by, for each leaf $\lambda \in \mc L^u$ that intersects $\mu$, considering the forward endpoints of the orbits $\lambda \cap \mu$ in $\partial_\infty \lambda$. By \Cref{lem:stab_marker}, these are markers of $E_\infty$, and since they cover $E_\infty \ssm s_{\textbf{nm}}$, every marker is the image of such a leaf.

Using this structure, we now prove that $\Phi$ is a homeomorphism. Fix a stable leaf $m$ of $\orb^s$ and consider the restricted circle bundle $E_\infty|_{m}$ over $m$, which is an open cylinder in $E_\infty$. Here, we are also considering $m$ as a line in $\mc L^u$.
The preimage $\Phi^{-1}(E_\infty|_{m})$ is exactly the unstable saturation $\mc S^u (m)$ in $\mc \orb$, i.e. all points of $\orb$ whose unstable leaf crosses $m$. Since $\mc S^u (m)$ is open in $\orb$, it suffices to show that the restriction of $\Phi$ to ${\mc S^u (m)}$ is a homeomorphism onto $E_\infty|_{m}\ssm s_{\rm{nm}}$. Actually, by invariance of domain, we only need to show that the map is open.

\begin{figure}[h!]
	\centering
	\includegraphics[width=0.9\linewidth]{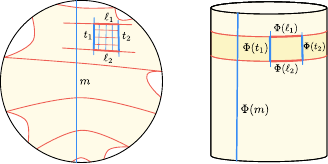}
	\caption{On the left is the open foliation box $U$ in $\orb$, with $\mc S^u (m)$ highlighted in yellow. On the right is $\Phi(\partial U)$ in $E_\infty\mid_{m}$, with $E_\infty\mid_{t_1}$ highlighted in a darker yellow.}\label{fig:openbox}
\end{figure}

For this, let $U\subset {\mc S^u (m)} \subset\orb$ be an open foliation box whose closure is bound by segments of leaves $\ell_1,\ell_2$ of $\orb^u$ and segments of leaves $t_1,t_2$ of $\orb^s$, as in \Cref{fig:openbox}. Note that $t_1$ determines an open embedded line in the projection of $m$ to $\mc L^u$, so that $E_\infty\mid_{t_1}$ is an open embedded cylinder in $E_\infty
|_{m}$. Then we know $\partial\overline{U}$ gets mapped to the closed nullhomotopic curve $\Phi(\partial\overline{U})\subseteq  E_\infty|_{m}$ as in \Cref{fig:openbox}, and $U$ must be mapped to the interior region since $\Phi$ is continuous on leaves of $\orb^u$. This shows that $\Phi(U)$ is open and completes the proof.
\end{proof}

Note that this means $E_\infty\ssm s_{\textbf{nm}}$ is Hausdorff, so as an open interval bundle over $\mc L$ it has no sections. In other words, we have the following corollary.

\begin{corollary}\label{cor:no_section}
	Every section of $E_\infty$ passes through $s_\mathbf{nm}$ at some point.
\end{corollary}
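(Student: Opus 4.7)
The plan is to argue by contradiction, leveraging the equivariant homeomorphism $\Phi\colon \orb \to E_\infty \ssm s_\mathbf{nm}$ supplied by \Cref{thm:stitching}. I would suppose $\sigma\colon \mc L^u \to E_\infty$ is a continuous section whose image is disjoint from $s_\mathbf{nm}$. Then $\sigma$ factors through the open subset $E_\infty \ssm s_\mathbf{nm}$, and composing with $\Phi^{-1}$ yields a continuous map $\sigma' := \Phi^{-1} \circ \sigma \colon \mc L^u \to \orb$ which, by the part of \Cref{thm:stitching} identifying circle fibers (minus their nonmarker points) with leaves of $\orb^u$, is a section of the quotient projection $\orb \to \orb/\orb^u = \mc L^u$.

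The key observation to invoke is that any continuous section of a continuous projection is automatically a topological embedding: if $p\colon E \to B$ is continuous and $s\colon B \to E$ satisfies $p \circ s = \id_B$, then the set-theoretic inverse $s^{-1}$ on $s(B)$ coincides with the restriction $p|_{s(B)}$ and is therefore continuous. Applied to $\sigma$, this produces a topological embedding of $\mc L^u$ as a subspace of $E_\infty \ssm s_\mathbf{nm}$, which is homeomorphic to the plane $\orb$ and in particular is Hausdorff.

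The contradiction then comes from the structure of the leaf space: in the non $\R$-covered setting (which is the case of interest; the skew case is handled separately in \Cref{sec:skew}), $\mc L^u$ contains pairs of nonseparated leaves coming from branching leaves of $\wt W^u$ as described in \Cref{sec:struct}, and is therefore not Hausdorff. Embedding $\mc L^u$ into a Hausdorff space is then impossible, so no section of $E_\infty$ can entirely avoid $s_\mathbf{nm}$.

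The argument is essentially a brief unpacking of \Cref{thm:stitching}; the only minor subtlety is the observation that sections of continuous projections are automatically embeddings, which is standard but worth flagging.
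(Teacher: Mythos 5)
Your argument is correct and takes essentially the same approach as the paper, whose entire proof is the single sentence preceding the corollary: $E_\infty \ssm s_{\mathbf{nm}}$ is Hausdorff by \Cref{thm:stitching}, so as an open interval bundle over $\mc L$ it admits no section. Your write-up simply unpacks this by observing that a section avoiding $s_{\mathbf{nm}}$ would, via $\Phi^{-1}$, embed $\mc L^u$ into the plane $\orb$, contradicting non-Hausdorffness of $\mc L^u$. You are also right to flag that the argument genuinely needs the non $\R$-covered hypothesis — in the skew case $\mc L^u \cong \R$ and sections of $\orb \to \mc L^u$ do exist — a restriction the paper's terse proof leaves implicit in the corollary's statement.
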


In light of \Cref{thm:stitching} and our conventions for drawing $\orb^{u/s}$, we will use the following convention when drawing $E_\infty$. Circle fibers (i.e. circles $\partial_\infty\lambda$ for $\lambda\in\mc L^u$) will be drawn in red, and will be horizontal. Markers will be drawn in blue, and the nonmarker section will be drawn in orange.

\subsection{Building $E_\infty$ over lines in $\mc L$}\label{subsec:over_lines}
In practice it is easier to work in cylinders in $E_\infty$, in which case it is clear to see how to construct the cylinder from $\orb$. 
Let $t$ be a transversal to $\orb^u$ in $\orb$ (possibly but not necessarily a leaf of $\orb^s$), and consider $\mc S^u(t)$, the unstable saturation of $t$.

Topologically, $\mc S^u(t)$ is an open rectangle that is trivially foliated by leaves of $\orb^u$, as in \Cref{fig:skew_ex} and \Cref{fig:nonR_ex}. 
We partially compactify $S^u(t)$ by adding two copies of $t$ as the endpoints of leaves of $\orb^u\mid_{\mc S^u(t)}$ -- note that this is \emph{not} done in $\overline{\orb}$. Then we identify the two copies of $t$ by gluing the opposite endpoints of each leaf of $\orb^u\mid_{\mc S^u(t)}$, obtaining the cylinder $E_\infty\mid_t$.

To see why this gives us $E_\infty\mid_t$, let $X$ be the cylinder we construct, let $s$ be the line of compactification points in $X$, and define 
\[
h\colon X\to E_\infty\mid_t
\] 
by $h(p)=\Phi(p)$ for $p\in X\ssm s$ and $h(q)=s_\textbf{nm}(\lambda)$ for $q\in s$ an endpoint of $\Theta(\lambda)$. Then $h$ is a homeomorphism onto its image when restricted to $X$ by \Cref{thm:stitching} and to $X\ssm s$ by construction. All we need to check is the continuity of $h$ and $h^{-1}$ in a neighborhood of $s$. This is easy to do using the fact that $X$ and $E_\infty\mid_t$ are both foliated by circles, and $h$ sends one foliation to another.

To illustrate, we will run this construction in the case that $\orb$ is skew.

\begin{example}[The skew case]
		\begin{figure}[h!]
			\centering
			\includegraphics[width=0.8\linewidth]{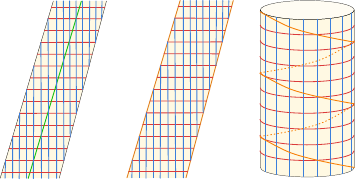}
			\caption{Constructing $E_\infty$ from $\orb$ in the skew case. On the left is $\mc S^u(t)\subseteq\orb$ with $t$ in green, in the middle is the partial compactification, and on the right is $E_\infty$.}\label{fig:skew_ex}
		\end{figure}
	
	Suppose $\orb$ is the skew plane, and pick a global transversal $t$ as in the left of \Cref{fig:skew_ex}. Taking the partial compactification adds the left and right sides of the skew plane, and by gluing opposite sides together we obtain all of $E_\infty$. We see that the nonmarker section twists around the cylinder, and markers are given by vertical segments. This recovers the well-known picture of $E_\infty$ in the skew case.
\end{example}

\begin{figure}[h!]
	\centering
	\includegraphics[width=\linewidth]{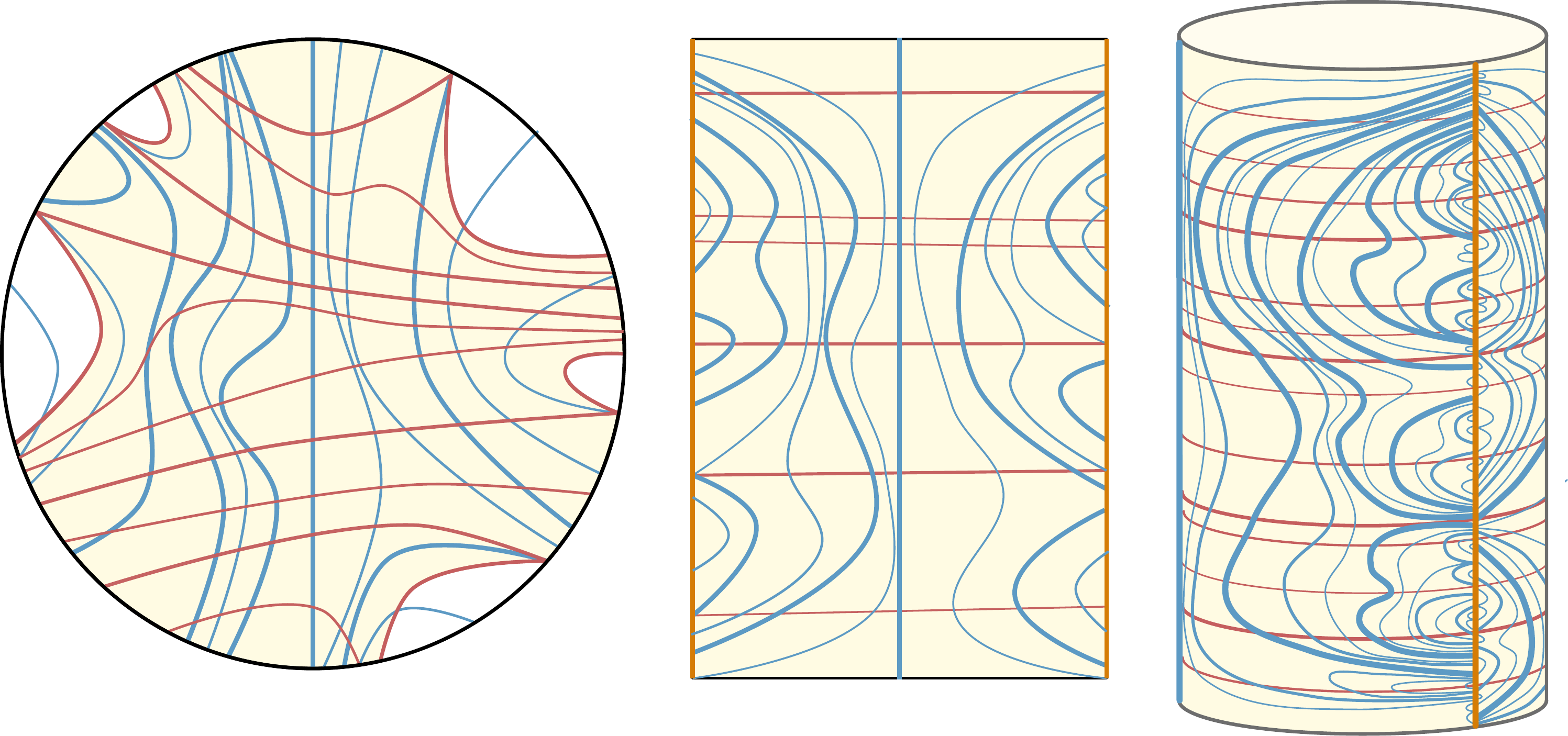}
	\caption{Constructing a cylinder $E_\infty\mid_{\ell^s}$ in the non $\R$-covered case. On the left is $\mc S^u(\ell^s)$, in the middle is the partial compactification, and on the right is $E_\infty\mid_{\ell^s}$. The marker $\Phi(\ell^s)$ appears as the left side of the cylinder $E_\infty\mid_{\ell^s}$.}\label{fig:nonR_ex}
\end{figure}

In the case that $\varphi$ is non $\R$-covered, we can reconstruct $E_\infty$ from $\orb$ by building it over lines in the leaf space and gluing the resulting cylinders together. This is easiest if we work with lines in $\mc L^u$ coming from leaves in $\mc L^s$. In this case, \Cref{fig:nonR_ex} illustrates what $\mc S^u(\ell^s)$, the partial compactification, and $E_\infty\mid_{\ell^s}$ might look like. One can glue together all cylinders $E_\infty\mid_{\ell^s}$ as $\ell^s$ ranges over all leaves of $\orb^s$ to build all of $E_\infty$.

\subsubsection{Twisting and the structure of $E_\infty$ over leaves of $\orb^s$}\label{sec:twisting}
Let $t$ be a transversal to $\orb^u$, let $m$ be a marker contained in a cylinder $E_\infty\mid_t$, and suppose $m$ limits onto the nonmarker section on both ends. Let $z_1$ be the lower limit point and $z_2$ be the upper endpoint. Then moving up from $z_1$, the marker $m$ might go left or right coming off the nonmarker section, and similarly $m$ might hit $z_2$ from the left or right. We will call a marker \defn{twisted} if it leaves and joins the nonmarker section from opposite sides, and \defn{untwisted} otherwise. It makes sense to describe an untwisted marker as lying on the left or right of the nonmarker section. See \Cref{fig:twisting} to see the various possible configurations. In the skew case, every marker is twisted.

	\begin{figure}[h!]
	\centering
	\includegraphics[width=0.6\linewidth]{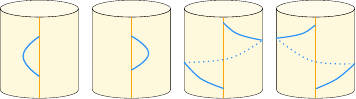}
	\caption{From left to right: an untwisted marker on the left of $s_\textbf{nm}$, an untwisted marker on the right of $s_\textbf{nm}$, and two twisted markers.}\label{fig:twisting}
\end{figure}

Given a leaf $\ell^s\in\mc L^s$ determining a line $\ell^s \subset \mc L^u$, the marker $\Phi(\ell^s)$ intersects every circle fiber in $E_\infty\mid_{\ell^s}$. This marker prevents other markers in $E_\infty\mid_{\ell^s}$ from twisting about the nonmarker seam, so every marker in this cylinder is untwisted. 

See \Cref{fig:twistingtubes} for an idea of what the markers on a larger piece of $E_\infty$ might look like.

\begin{figure}[h]
	\centering
	\includegraphics[width=0.7\linewidth]{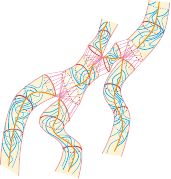}
	\caption{A possible configuration of markers on an open subset of $E_\infty$, including markers twisting in both directions.}\label{fig:twistingtubes}
\end{figure}

\subsection{The flow space perspective on sections of $E_\infty$}\label{sec:sections_in_orb}

Via \Cref{thm:stitching}, we can view the marker portion of sections of any universal circle as living in the orbit space. For $\partial\orb^+$ and $\partial\orb^-$, we describe these sets explicity below. This description will be used to prove \Cref{prop:distinct_flowucs}. First, we rephrase and expand upon \Cref{lem:gaps} using what we now know about the structure of markers in $E_\infty$.

\begin{lemma}\label{lem:marker_gap}
	Let $\lambda$ be a leaf of $\wt W^u$, so $\ell=\Theta(\lambda)$ is a frontier leaf of $\Omega^\pm_\lambda$. Let $c$ be the maximal chain in $\fr\Omega^\pm_\lambda$ containing $\ell$. Then
	\[
	\pi^\pm_\lambda(\spn (c))=\theta_\lambda=s_\textbf{nm}(\lambda),
	\] 
	and all other points in $\partial\orb$ are mapped to marker points of $\partial_\infty\lambda$ by $\pi^\pm_\lambda$.
	
	More specifically, for $\gamma$ a flowline in $\lambda$, let $x\in\partial_\infty\lambda$ be the forwards endpoint of $\gamma$, let $p=\Theta(\gamma)$, let $\ell^s$ be the leaf of $\orb^s$ through $p$, and let $z^\pm\in\partial\orb$ be its positive and negative limit points. Then 
	\[(\pi^\pm_\lambda)^{-1}(\Phi(p))=z^\pm\cup \spn(c'),\] where $c'$ is the chain in $\fr\Omega^\pm_\lambda$ made of leaves nonseparated from $\ell^s$, if such a chain exists. See \Cref{fig:marker_gap_lemma}. 
\end{lemma}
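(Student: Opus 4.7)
The plan is to build on \Cref{lem:gaps} by carefully tracking the factorization $\pi^\pm_\lambda = f \circ h|_{\partial\orb}$, where $h\colon \overline{\orb}\to\overline{\Omega}^\pm_\lambda$ is the retraction fixing $\partial_\infty \Omega^\pm_\lambda$ and $f\colon \partial\overline{\Omega}^\pm_\lambda \to \partial_\infty\lambda$ collapses each frontier chain to a single point. \Cref{lem:gaps} already identifies the action of $f$ on chains: the chain $c$ collapses to $\theta_\lambda$ and every other frontier chain collapses to a forward endpoint of some orbit. To upgrade $c$'s collapse to the full span in $\partial\orb$, I will observe that $h$ sends each component of $\partial\orb\ssm \partial_\infty\Omega^\pm_\lambda$ lying between consecutive leaves of $c$ into the chain $c$ itself; composing with $f$ then yields $\pi^\pm_\lambda(\spn(c))=\theta_\lambda$. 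A point of $\partial\orb\ssm \spn(c)$ is either in $\partial_\infty\Omega^\pm_\lambda$ (then $h$ fixes it and $f$ sends it to the endpoint in $\partial_\infty\lambda$ of a stable or unstable ray of $\Omega^\pm_\lambda$, which corresponds to a forward orbit endpoint) or in the span of some other frontier chain $c''$ (which collapses to a forward orbit endpoint). Because \Cref{lem:nonmarker} identifies $\theta_\lambda$ as the unique nonmarker point of $\partial_\infty\lambda$, every such image is a marker point, proving the first claim.

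For the specific preimage identification, the heart of the matter is the computation $f(z^\pm)=\Phi(p)=x$. To see this, I trace $f$ through the homeomorphism $\Omega^\pm_\lambda \cong \wt h^\pm(\lambda)$ supplied by $(\Theta\circ \wt h^\pm)^{-1}$. By \Cref{lem:orbit_shadow}, the positive/negative stable ray $\orb^s_\pm(p)\subseteq \ell^s$ (whose limit point in $\partial_\infty \Omega^\pm_\lambda$ is $z^\pm$) corresponds to the tilted flowline $\wt h^\pm(\gamma)\subseteq \wt h^\pm(\lambda)$. Since $\wt h^\pm$ displaces each point a bounded amount within its stable leaf of $\wt W^s$, the canonical identification $\partial_\infty\wt h^\pm(\lambda)\cong \partial_\infty\lambda$ sends the forward endpoint of $\wt h^\pm(\gamma)$ to the forward endpoint $x$ of $\gamma$. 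Thus $f(z^\pm)=x$, which puts $z^\pm$ in $(\pi^\pm_\lambda)^{-1}(\Phi(p))$.

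If a chain $c'\subseteq \fr\Omega^\pm_\lambda$ of stable leaves nonseparated from $\ell^s$ exists, applying the previous argument to each leaf of $c'$, together with the fact that tilted flowlines arising from nonseparated stable leaves share a common forward endpoint in $\partial_\infty\wt h^\pm(\lambda)$, shows that $f$ sends the entire chain $c'$ to $x$. The retraction $h$ then pulls this back to all of $\spn(c')$, giving $z^\pm\cup \spn(c')\subseteq (\pi^\pm_\lambda)^{-1}(\Phi(p))$. Equality follows because distinct orbits in $\lambda$ are distinct geodesics sharing only the backward endpoint $\theta_\lambda$, so they have distinct forward endpoints in $\partial_\infty\lambda\ssm\{\theta_\lambda\}$; hence no further point of $\partial\orb$ can map to $x$. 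The main subtlety I expect is verifying that nonseparated stable leaves in $\fr\Omega^\pm_\lambda$ really do give rise to asymptotic tilted flowlines in $\wt h^\pm(\lambda)$. For this I would leverage the perfect fit structure between consecutive leaves of a chain recalled in \Cref{sec:struct}, which is transported into $\wt h^\pm(\lambda)$ under $(\Theta\circ \wt h^\pm)^{-1}$ and forces the corresponding flowlines to converge to the same forward endpoint.
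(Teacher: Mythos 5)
Your overall plan is sound and matches the paper's approach: factor $\pi^\pm_\lambda = f\circ h$ through the retraction $h$ to $\overline{\Omega}^\pm_\lambda$ and the LMT collapsing map $f$, cite \Cref{lem:gaps} and \Cref{lem:nonmarker} for the first statement, and compute $f(z^\pm) = x$ by tracing $\orb^s_\pm(p) = \ell^s\cap\Omega^\pm_\lambda$ back to the tilted flowline $\wt h^\pm(\gamma)$ via \Cref{lem:orbit_shadow}. That computation is exactly the paper's main step.

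However, there is a genuine gap in your treatment of the chain $c'$. You propose that ``nonseparated stable leaves in $\fr\Omega^\pm_\lambda$ really do give rise to asymptotic tilted flowlines in $\wt h^\pm(\lambda)$,'' and that you would prove this by transporting the perfect fit structure ``into $\wt h^\pm(\lambda)$ under $(\Theta\circ\wt h^\pm)^{-1}$.'' This cannot work as stated: the leaves of $c'$ lie in the \emph{frontier} of the shadow, hence are disjoint from $\Omega^\pm_\lambda$, and $(\Theta\circ\wt h^\pm)^{-1}$ is only defined on $\Omega^\pm_\lambda$ itself. These frontier leaves do not meet the tilted leaf $\wt h^\pm(\lambda)$, so they do not correspond to any flowlines in it, and there is nothing to be asymptotic to. What one actually needs here is not a geometric statement about flowlines from $c'$ but the built-in behavior of the LMT map: $f$ collapses each frontier leaf to a single point of $\partial_\infty\lambda$ and is monotone, so consecutive leaves of a frontier chain collapse to the same point; since the chain $c'$ is the nonseparated chain attached to $\ell^s$ on the $\pm$ side, its arc in $\partial\overline{\Omega}^\pm_\lambda$ abuts $z^\pm$, and monotonicity forces $f$ to send the whole arc to $f(z^\pm) = x$. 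Equivalently, one can make a limiting argument with stable leaves $t_n$ in $\Omega^\pm_\lambda$ that converge to both $\ell^s$ and $c'_1$ (this is what nonseparation gives you); the forward endpoints of the corresponding tilted flowlines $\wt h^\pm(\gamma_n)$ converge both to $x$ and to $f(c'_1)$, whence $f(c'_1) = x$, and similarly down the chain. Your intuition is in the right place, but as written the argument rests on a nonexistent object.

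A smaller point: your first-statement argument quietly assumes that every point of $\partial_\infty\Omega^\pm_\lambda$ in the complement of $\spn(c)$ is an endpoint of a stable or unstable ray. This is true only for a dense set; for limit points one should invoke continuity of $f$ together with the fact that $f^{-1}(\theta_\lambda)$ is a closed arc containing $\overline{c}$, and then observe that any point outside $\spn(c)$ has rays on both sides mapping to marker points, forcing it to map to a marker point as well. This is a minor issue compared with the $c'$ gap, but worth tightening.
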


\begin{figure}[h]
	\centering
	\includegraphics[width=0.8\linewidth]{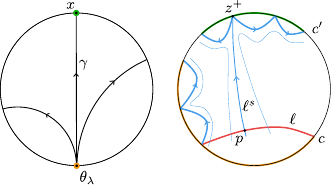}
	\caption{The setup for \Cref{lem:marker_gap}. On the left is the leaf $\lambda$, and on the right is $\orb$. In $\orb$, the chains $c$ and $c'$ are bolded, for the case of $\partial\orb^+$. The span of $c$ is in orange, and is sent to the orange point $\theta_\lambda$ in $\partial_\infty\lambda$ by $\pi^+_\lambda$. The span of $c'$ is in green, and is sent to the green point $x$ by $\pi^+_\lambda$.}\label{fig:marker_gap_lemma}
\end{figure}

\begin{proof}
	The first statement is a combination of \Cref{lem:gaps} and \Cref{lem:nonmarker}.
	
	For the second statement, by \Cref{thm:stitching}, we have \[\Phi(p)=x\in\partial_\infty\lambda.\] By \Cref{lem:gaps}, 
	\[\Theta\circ\wt h^\pm(\gamma)=\ell^s\cap\Omega^\pm_\lambda,\] and $\spn(c')$ (or just $z^\pm$ if $c'=\varnothing$) is exactly what is mapped to $x$ under $\pi^\pm_\lambda$.
\end{proof}

Using the above notation, we call $\spn(c)$ the \defn{nonmarker gap} of $\pi^\pm_\lambda$, and $\spn(c')$ a \defn{marker gap}.

\begin{figure}[h]
	\centering
	\includegraphics[width=0.8\linewidth]{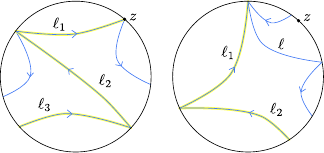}
	\caption{On the left, a backwards chain from $z$ is highlighted. The unhighlighted leaves are not part of such a chain. On the right, a backwards chain from $\ell$ opposite $z$ is highlighted. The unhighlighted leaves are not part of such a chain.}\label{fig:backwards_chain}
\end{figure}

Given a leaf $\ell$ of $\orb^s$, let $\ell^+$ and $\ell^-$ denote its positive and negative endpoints in $\partial\orb$, respectively. A \defn{chain in $\orb^s$ from a point $z\in\partial\orb$} is a sequence $\ell_1,\ell_2,\ldots,\ell_n$ in $\orb^s$ such that $z$ agrees with an endpoint of $\ell_1$, and $\ell_{i}$ and $\ell_{i+1}$ share an endpoint for all $i<n$. A chain from $z$ is a \defn{backwards chain} if $\ell_i^-=\ell_{i+1}^+$ for all $i<n$. Similarly, a chain is a \defn{forwards chain} if $\ell_i^+=\ell_{i+1}^-$ for all $i<n$. 

Given a leaf $\ell$ of $\orb^s$, a chain $\ell_1,\ell_2,\ldots,\ell_n$ in $\orb^s$ from an endpoint of $\ell$, and a point $z\neq \ell^\pm\in\partial\orb$, this chain is a \defn{chain from $\ell$ opposite $z$} if for all $i$, each endpoint of $\ell_i$ is in a distinct connected component of $\partial\orb\ssm\{\ell^+,\ell^-\}$ from $z$. See \Cref{fig:backwards_chain}.

Recall from \Cref{sec:tilting_uc} that each $z \in \partial \orb$ determines sections $\sigma_x^+$ and $\sigma_x^-$ of $\partial \orb^+$ and $\partial \orb^-$, respectively. To keep notation from getting out of hand, we often blur the distinction between a section and its image in $E_\infty$.

\begin{proposition}\label{prop:flow_section_in_orb}
	Let $z\in\partial\orb$, giving sections $\sigma_z^{+}$ of $\partial\orb^+$ and $\sigma_z^-$ of  $\partial\orb^-$ in $E_\infty$. Then 
	\begin{align*}\Phi^{-1}(\sigma^{+}_z\ssm s_\textbf{nm}) &= \{\ell_i\;\mid\;\ell_1,\ldots,\ell_n\text{ a backwards chain from }z\}\\
		&\cup \{\ell_i\;\mid\;\ell\in\orb^s,\;\ell_1,\ldots,\ell_n\text{ a backwards chain from }\ell\text{ opposite }z\}, 
		\end{align*} and 
	\begin{align*}\Phi^{-1}(\sigma^{-}_z\ssm s_\textbf{nm}) &= \{\ell_i\;\mid\;\ell_1,\ldots,\ell_n\text{ a forwards chain from }z\}\\
		&\cup \{\ell_i\;\mid\;\ell\in\orb^s,\;\ell_1,\ldots,\ell_n\text{ a forwards chain from }\ell\text{ opposite }z\}, 
	\end{align*}
\end{proposition}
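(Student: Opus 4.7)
The plan is to prove the formula for $\partial\orb^+$, with the $\partial\orb^-$ case following by the symmetric argument (backwards chains replaced by forwards chains, positive endpoints by negative ones). I would reduce to the leafwise gap description from \Cref{lem:marker_gap} and then translate between ``marker gap'' language and ``chain'' language.

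First, by \Cref{thm:stitching}, a point $p \in \orb$ lies in $\Phi^{-1}(\sigma_z^+ \setminus s_\textbf{nm})$ if and only if $\Phi(p) = \sigma_z^+(\lambda)$, where $\lambda = \Theta^{-1}(\ell)$ and $\ell$ is the unstable leaf through $p$. Writing $p = \ell^s \cap \ell$ for $\ell^s$ the stable leaf through $p$, \Cref{lem:marker_gap} reduces this equality to the condition $z \in (\ell^s)^+ \cup \spn(c')$, where $c'$ is the chain in $\fr\Omega^+_\lambda$ made of stable leaves nonseparated with $\ell^s$ on the positive side. Because this condition is essentially determined by $\ell^s$ and its maximal nonseparated chain (not by the particular $\ell$ through $p$), I expect $\Phi^{-1}(\sigma_z^+ \setminus s_\textbf{nm})$ to be a union of whole stable leaves, and the task to reduce to identifying exactly which stable leaves $\ell^s$ satisfy the gap condition.

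The heart of the argument is translating the gap condition into chain language. The arc $\spn(c')$ in $\partial\orb$ is bracketed by corners of the extremal members of the maximal nonseparated chain extending $\ell^s$ positively, and consists of the positive endpoints of those members together with the corners of the intermediate unstable leaves forming perfect fits between consecutive members. A backwards chain $\ell_1, \ldots, \ell_n$ with $\ell_i^- = \ell_{i+1}^+$ is exactly a zig-zag of stable leaves linked by these perfect-fit corners. When $z$ coincides with a stable leaf endpoint in the nonseparated structure (in particular $z = (\ell^s)^+$), one can build a backwards chain from $z$ itself down to $\ell^s$, placing $\ell^s$ in the first set $A$. When $z$ instead lies strictly in an intermediate unstable corner arc of $\spn(c')$ (so $z$ is not a stable endpoint), no backwards chain of stable leaves originates at $z$, and one must instead realize $\ell^s$ as a leaf in a backwards chain hanging off an auxiliary stable leaf on the side opposite $z$, placing $\ell^s$ in the second set $B$. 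The reverse inclusion runs the correspondence backwards: any chain of type $A$ or $B$ encodes a perfect-fit path from $z$ to $\ell^s$, witnessing $\ell^s$ in the relevant nonseparated chain whose positive span contains $z$ or whose boundary endpoint agrees with $z$.

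I expect the main obstacle to be the combinatorial bookkeeping in the translation step: verifying that $A$ and $B$ correctly partition the chain configurations based on whether $z$ sits on a stable leaf endpoint or on an intermediate unstable corner in $\spn(c')$, and checking the saturation claim that each such $\ell^s$ contributes all of itself (rather than only a subset) to $\Phi^{-1}(\sigma_z^+ \setminus s_\textbf{nm})$. The asymmetric role of ``opposite $z$'' in defining $B$ is precisely what keeps the auxiliary chain on the correct side of the nonseparated structure, and the orientation convention $\ell_i^- = \ell_{i+1}^+$, which distinguishes backwards chains (relevant to $\partial\orb^+$) from forwards chains (used for $\partial\orb^-$), must be tracked carefully throughout.
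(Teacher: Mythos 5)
Your high-level plan matches the paper's: reduce via \Cref{thm:stitching} (so $p\in\Phi^{-1}(\sigma_z^+\ssm s_\textbf{nm})$ iff $\Phi(p)=\sigma_z^+(\lambda)$ for the $\lambda$ through $p$) and then invoke \Cref{lem:marker_gap} to convert this to the condition $z\in(\ell^s)^+\cup\spn(c')$, followed by a translation into the chain language. That is exactly the paper's route, and you correctly identify the translation as the heart of the matter.

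However, the translation step as you sketch it contains a conflation that should be flagged. The chain $c'$ from \Cref{lem:marker_gap} is a chain of \emph{nonseparated} stable leaves: consecutive members make adjacent perfect fits with an intermediate unstable leaf, meeting it at opposite ends, so consecutive members of $c'$ do \emph{not} share an endpoint in $\partial\orb$. By contrast, the ``backwards chain'' of the proposition is a chain of stable leaves that literally \emph{share} endpoints ($\ell_i^-=\ell_{i+1}^+$), which is a double-perfect-fit phenomenon at a single ideal point. These are genuinely different combinatorial structures, and your description of $\spn(c')$ as ``positive endpoints of chain members plus corners of intermediate unstables'' treats the span as if it were the shared-endpoint structure. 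The paper bridges this carefully with a ``test leaf'' argument (a stable leaf near $p$ used to approximate the chain of leaves spanning the gap of $\pi_{\lambda^u}^+$ containing $\ell_i^+$), which is what lets one pass between the two chain notions and decide when $z$ falls in $A$ versus $B$; in particular your proposed dichotomy — $z$ a stable endpoint in the nonseparated structure versus $z$ in an intermediate arc — is not the right cut, since $\ell^s$ can lie in $A$ via a backwards chain through a leaf merely sharing an endpoint with $\ell^s$ rather than nonseparated from it. The proposal is structurally right but the key combinatorial step is under-specified and, as written, slightly off; spelling out the test-leaf argument would close the gap.
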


\begin{proof}
We prove only the characterization of $\sigma_z^+$; the case for $\sigma_z^-$ is identical with the arrows on the leaves of $\orb^s$ reversed. 

Let 
\begin{align*}A &= \{\ell_i\;\mid\;\ell_1,\ldots,\ell_n\text{ a backwards chain from }z\},
\end{align*}
and let \[B=\{\ell_i\;\mid\;\ell\in\orb^s,\;\ell_1,\ldots,\ell_n\text{ a backwards chain from }\ell\text{ opposite }z\}.\]
We wish to show $\Phi(A\cup B)=\sigma^{+}_z\ssm s_\textbf{nm}$.

First suppose $p\in A$, so $p\in\ell_i$ for $\ell_1,\ldots,\ell_n$ a backwards chain from $z$. Let $\ell^u$ be the unstable leaf in $\orb^u$ through $p$, and let $\lambda^u$ be the corresponding leaf in $\wt W^u$. Consider the shadow $\Omega^+_{\lambda^u}$. By \Cref{lem:marker_gap}, we see that $\ell_i^+$ is a marker point for $\lambda^u$, i.e. $\pi_{\lambda^u}^{+}(\ell_i^+)\in \partial_\infty\l^u\subseteq E_\infty$ lies on a marker. 

\begin{figure}[h]
	\centering
	\includegraphics[width=0.8\linewidth]{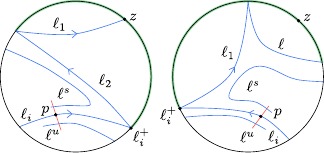}
	\caption{On the left, the case that $p\in A$. On the right, the case that $p\in B$. In both cases, using test leaves we see the green region of $\partial\orb$ gets collapsed to $\pi_{\lambda^u}^{+}(\ell_i^+)$.}\label{fig:test_leaf}
\end{figure}

In case $i=1$, we have $z=\ell_i^+$. Else, since $\ell_i$ and $\ell_{i-1}$ share an endpoint, $\ell_i^+$ must be in a gap for $\pi_{\lambda^u}^{+}$. To detect which other points in $\partial\orb$ lie in this gap, we use a leaf $\ell^s$ of $\orb^s$ through $\ell^u$ and near $p$ to approximate the chain of leaves spanning this gap. As in \Cref{fig:test_leaf}, we see that $z$ lies in this gap,
so 
\[
\pi_{\lambda^u}^{+}(z)=\pi_{\lambda^u}^{+}(\ell_i^+).
\] 

Thus, in either case we have
\[\Phi(p)=\pi_{\lambda^u}^{+}(\ell_i^+)=\pi_{\lambda^u}^{+}(z)=\sigma_z^{+}(\lambda^u),\]
so $\Phi(A)\subseteq \sigma^{+}_z\ssm s_\textbf{nm}$.

Now suppose $p\in B$, so $p\in\ell_i$ for $\ell_1,\ldots,\ell_n$ a backwards chain from $\ell$ opposite $z$ for some $\ell\in\orb^s$. A similar argument as above shows $\Phi(p)\in \sigma^{+}_z\ssm s_\textbf{nm}$; see \Cref{fig:test_leaf}. Thus,
\[\Phi(A\cup B)\subseteq \sigma^{+}_z\ssm s_\textbf{nm}.\]

\begin{figure}[h!]
	\centering
	\includegraphics[width=0.4\linewidth]{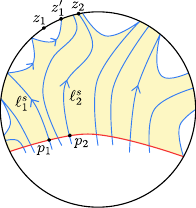}
	\caption{The cases when $\Phi(p)=\sigma_z^{+}(\lambda^u)$. When $p=p_1$, we have $z=z_1$ or $z=z_1'$. When $p=p_2$, we have $z=z_2$.}\label{fig:marker_gap}
\end{figure}

To show the opposite inclusion, let $p\in\orb$ such that $\Phi(p)=\sigma_z^{+}(\lambda^u)$ for some $\lambda^u\in\mc L^u$. Let $\ell^s$ be the stable leaf through $p$, and note that $\ell^u$ is the unstable leaf through $p$ (where by convention, $\ell^u=\Theta(\lambda^u)$). By \Cref{lem:marker_gap}, $z$ is either the forwards endpoint of $\ell^u$, or lies in a marker gap. In either case, we see a picture as in \Cref{fig:marker_gap}, and we can see the backwards chain realizing $p\in A\cup B$.
\end{proof}

Recall that a universal circle is \defn{admissible} if none of its sections cross markers. By \Cref{prop:flow_section_in_orb}, we have the following corollary.

\begin{corollary}\label{cor:admissible}
	The universal circles $\partial\orb^+$ and $\partial\orb^-$ are admissible.
\end{corollary}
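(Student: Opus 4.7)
The plan is to translate the admissibility condition into a statement about the flow space via the stitching homeomorphism, and then read off the answer from \Cref{prop:flow_section_in_orb}. By \Cref{thm:stitching}, the map $\Phi$ identifies the foliation of $E_\infty \ssm s_{\textbf{nm}}$ by markers with the stable foliation $\orb^s$. Consequently, a section $\sigma$ of $E_\infty$ transversely crosses a marker in $E_\infty \ssm s_{\textbf{nm}}$ if and only if its $\Phi$-preimage in $\orb$ intersects some leaf $\ell^s$ of $\orb^s$ without containing all of $\ell^s$. So admissibility reduces to showing that $\Phi^{-1}(\sigma_z^{\pm} \ssm s_{\textbf{nm}})$ is saturated by leaves of $\orb^s$.

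Next I would fix $z \in \partial \orb$ and apply \Cref{prop:flow_section_in_orb} directly. The proposition presents $\Phi^{-1}(\sigma_z^{+} \ssm s_{\textbf{nm}})$ (and similarly for $\sigma_z^-$) as a union of \emph{entire} leaves of $\orb^s$: all leaves appearing in backwards chains from $z$, together with those appearing in backwards chains from some $\ell \in \orb^s$ opposite $z$. Since these sets are $\orb^s$-saturated, $\sigma_z^{\pm}$ either contains an entire marker or is disjoint from it in the open set $E_\infty \ssm s_{\textbf{nm}}$, so no transverse crossing is possible there.

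The final thing to check — and what I expect to be the main subtlety — is behavior near the nonmarker section, since markers can limit onto $s_{\textbf{nm}}$ and in principle $\sigma_z^{\pm}$ could cross a marker precisely at its endpoint on $s_{\textbf{nm}}$. Here I would use \Cref{lem:marker_gap}: whenever $\sigma_z^{\pm}$ passes through $\theta_\lambda = s_{\textbf{nm}}(\lambda)$, the point $z$ lies in the nonmarker gap of $\pi^{\pm}_\lambda$, which is spanned by the maximal chain $c$ in $\fr \Omega^{\pm}_\lambda$ containing $\ell=\Theta(\lambda)$. The chain structure determines, consistently on both sides of $\lambda$ in $\mc L^u$, on which side of each limiting marker the section $\sigma_z^{\pm}$ sits, so the section arrives at and departs from $s_{\textbf{nm}}$ on the same side of every incident marker. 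Combining the two cases yields that $\sigma_z^{\pm}$ never crosses a marker, proving that $\partial \orb^{+}$ and $\partial \orb^{-}$ are admissible universal circles.
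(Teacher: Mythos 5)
Your argument is correct and matches the paper's, which simply deduces the corollary from \Cref{prop:flow_section_in_orb}: since $\Phi^{-1}(\sigma_z^{\pm}\ssm s_{\textbf{nm}})$ is a union of entire leaves of $\orb^s$, the section away from $s_{\textbf{nm}}$ is a union of whole markers under the identification of \Cref{thm:stitching}, so it cannot cross one. Your third paragraph, however, addresses a non-issue: by \Cref{lem:nonmarker} no marker contains a point of $s_{\textbf{nm}}$ (markers only \emph{limit} onto it), so a section can never meet a marker at a nonmarker point and there is no "crossing at an endpoint on $s_{\textbf{nm}}$" to rule out; the appeal to \Cref{lem:marker_gap} and the "same side of every incident marker" language is therefore unnecessary and somewhat misleading, since the section lies \emph{on} the adjacent markers rather than to one side of them.
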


\begin{remark}\label{rem:nm_is_flowspace_section}
 We also make the following observation about flow space sections in a cylinder $E_\infty\mid_{\ell^s}$, where $\ell^s$ is a leaf of $\orb^s$. Let $z^+\in\partial\orb$ denote the positive endpoint of $\ell^s$, and let $z^-$ denote the negative endpoint. Then by \Cref{prop:flow_section_in_orb}, the marker $\Phi(\ell^s)$ agrees with the flowpace sections $\sigma^{+}_{z^+}$ and $\sigma^-_{z^-}$ in this cylinder. Similarly, in this cylinder we have that $s_{\textbf{nm}}$, $\sigma^{+}_{z^-}$, and $\sigma^-_{z^+}$ agree.
\end{remark}

\subsubsection{Special sections in the flow space}
We can also map special sections to the flow space via \Cref{thm:stitching}. As we won't need this for any of the following arguments, we only sketch the case for $s^\ell_z$ with $z$ the nonmarker point of a leaf $\lambda$ of $\widetilde{\mc{F}}$. Special sections based on markers can be built out of similar pieces.

Let $\lambda\in\mc L^u$ and let $z=s_\textbf{nm}(\l)$. As before, we denote the stable saturation of $\ell=\Theta(\l)$ by $\mc S^s(\ell)$. Consider $\overline{\mc S^s(\ell)}$, the closure of $\mc S^s(\ell)$ in $\orb$. Then $\fr \overline{\mc S^s(\ell)}$ is a union of leaves of $\orb^s$, and these leaves are nonseparated from leaves in $\mc S^s(\ell)$. Using the orientations on $\orb^{s/u}$, these leaves each fall into one of four classes, denoted $X_{lu}$, $X_{ru}$, $X_{ld}$, and $X_{rd}$, as follows.

First, let $X_{lu} \cup X_{ru}$ be the set of stable leaves of $\fr \overline{\mc S^s(\ell)}$ that lie on the positive side of $\ell$ and let $X_{ld} \cup X_{rd}$ be the set that lie on negative side of $\ell$. Next, each leaf of $\fr \overline{\mc S^s(\ell)}$ is part of an oriented chain of nonseparated leaves starting with a unique stable leaf in $\mc S^s(\ell)$. If the branching of this chain is from below (along $\ell$) then all its leaves from $\fr \overline{\mc S^s(\ell)}$ are in $X_{ru} \cup X_{rd}$, and if the branching is from above then the leaves are in $X_{lu}\cup X_{ld}$. From this, we see that these sets partition the stable leaves of $\fr \overline{\mc S^s(\ell)}$. See also \Cref{fig:slz_in_orb}.

\begin{figure}[h]
	\centering
	\includegraphics[width=0.4\linewidth]{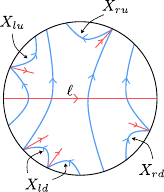}
	\caption{Leaves in each of $X_{lu}$, $X_{ru}$, $X_{ld}$, and $X_{rd}$, with respect to the leaf $\ell$, in red.}\label{fig:slz_in_orb}
\end{figure}

\begin{claim}
	If $\mc L^u_\lambda$ is the subspace of leaves comparable with $\lambda$, then
		\[
		\Phi^{-1}(s^\ell_z \ssm s_\textbf{nm}) \cap \mc L^u_\lambda  =X_{lu}\cup X_{rd}.
		\]
\end{claim}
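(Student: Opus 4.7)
I would use the stitching homeomorphism $\Phi$ of \Cref{thm:stitching} to transport the analysis into the flow space, where markers of $E_\infty$ become leaves of $\orb^s$. The claim then breaks into two parts: (i) $s^\ell_z$ agrees with $s_{\textbf{nm}}$ on every unstable leaf strictly comparable with $\lambda$, so $\Phi^{-1}(s^\ell_z\ssm s_{\textbf{nm}})$ is disjoint from the interior $\mc S^s(\ell)$; and (ii) extending $s^\ell_z$ across the nonseparated chains bounding $\mc L^u_\lambda$ via the turning corner rule selects exactly the frontier leaves in $X_{lu}\cup X_{rd}$.

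For (i), note that $z=\theta_\lambda$ is a nonmarker point, so by \Cref{lem:nonmarker} the nonmarker section $s_{\textbf{nm}}$ is itself admissible -- it shares no point with any marker and therefore crosses none. In any Hausdorff subinterval of $\mc L^u$ containing $\lambda$, the approximating paths $\gamma_i$ defining $s^\ell_z$ can only travel an arbitrarily small distance counterclockwise above $\lambda$ (or clockwise below $\lambda$) from $z$ before hitting one of the markers accumulating to $\theta_\mu$ in each fiber. In the limit, these paths collapse back onto $s_{\textbf{nm}}$ along every leaf comparable with $\lambda$. Hence $\Phi^{-1}(s^\ell_z\ssm s_{\textbf{nm}})\cap\mc S^s(\ell)=\emptyset$, and the only possible contribution to $\Phi^{-1}(s^\ell_z\ssm s_{\textbf{nm}})\cap\mc L^u_\lambda$ comes from the frontier $\fr\overline{\mc S^s(\ell)}$, reached by extending $s^\ell_z$ across chains of nonseparated unstable leaves.

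For (ii), each maximal nonseparated chain of unstable leaves on the positive side of $\lambda$ projects to a chain in $X_{lu}\cup X_{ru}$, which is finite by \cite[Corollary F]{fenley1998structure}. The turning corner rule (Lemma 6.18 of \cite{CalDun_UC}) specifies canonical corner points $p_{\mu\mu'}$ on each fiber, and these correspond under $\Phi$ to specific stable leaves in the associated chain in $\fr\overline{\mc S^s(\ell)}$. The leftmost-going-up rule forces $s^\ell_z$ onto the corner lying furthest counterclockwise in $\partial_\infty\mu$; using the compatible orientations on $\orb^{u/s}$ fixed in \Cref{sec:anosov_background}, one checks that this counterclockwise corner is precisely the corner of the leaf branching from above, i.e., the element of $X_{lu}$, while the $X_{ru}$ (branching from below) corner lies on the opposite side and is skipped. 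A symmetric analysis with the rightmost-going-down rule on chains on the negative side of $\lambda$ identifies $X_{rd}$ and skips $X_{ld}$.

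\emph{The main obstacle} is the bookkeeping needed to match the Calegari--Dunfield ``left/right'' convention on circle fibers with the ``branching from above/below'' dichotomy for nonseparated chains in $\orb^s$; once this correspondence is pinned down using the pushforward through $\Phi$ of the orientations on $\orb^{u/s}$, together with the explicit description of corner points in terms of perfect fit rectangles, the identification of the selected chain with $X_{lu}$ (respectively $X_{rd}$) is a direct unwinding of the definitions.
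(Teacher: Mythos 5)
Your part (i) contains a fatal error, and it determines the rest of the argument.

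You claim that $s^\ell_z$ agrees with $s_\textbf{nm}$ on every leaf comparable with $\lambda$, i.e., that the approximating paths ``collapse back onto $s_\textbf{nm}$'' over all of $\mc L^u_\lambda$. This cannot be correct. If it were, the left-hand side $\Phi^{-1}(s^\ell_z \ssm s_\textbf{nm}) \cap \mc L^u_\lambda$ would be empty, whereas $X_{lu}\cup X_{rd}$ is nonempty in the non-$\R$-covered case (there is two-sided branching, so $\fr\overline{\mc S^s(\ell)}$ contains stable leaves of all four types). Independently, \Cref{rem:totally_disconnected} records that any special section intersects $s_\textbf{nm}$ in a \emph{totally disconnected} set; agreement on the open set $\mc L^u_\lambda$ would contradict this. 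The mechanism is exactly the opposite of what you describe: the ``leftmost-up'' rule forces $s^\ell_z$ to leave the seam and ride any marker that departs counterclockwise from $s_\textbf{nm}$ just above $\lambda$ -- and such markers are dense -- so the section is on markers over (most of) $\mc L^u_\lambda$, not on $s_\textbf{nm}$.

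Part (ii) then compounds the error by putting the sets $X_{lu},\ldots$ in the wrong place in the argument. These sets are collections of \emph{stable} leaves in $\fr\overline{\mc S^s(\ell)}\subset\orb$ -- they live over leaves of $\mc L^u$ that are still \emph{comparable} with $\lambda$ -- whereas the turning-corner rule for the leftmost section governs how $s^\ell_z$ is extended to \emph{incomparable} leaves (i.e., how one leaves $\mc L^u_\lambda$), which is treated separately in the paper after this claim. The paper's actual route is different and simpler: one checks that $\Phi(X)$ defines an admissible section over $\mc L^u_\lambda$; observes that restricted to a cylinder $E_\infty\mid_{\ell^s}$ over a stable leaf $\ell^s$ through $\ell$, the set $X_{lu}\cup X_{rd}$ is the ``upper-left/lower-right'' piece of the frontier of $\mc S^u(\ell^s)\cap\mc S^s(\ell)$; and then argues that if $s^\ell_z$ differed from $\Phi(X)$ by going even more to the left up (or right down), the resulting arc would have to cross stable leaves of $\orb$, which under $\Phi$ become markers -- a contradiction with admissibility. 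Your proposal never makes this ``would-cross-markers'' comparison, which is the heart of why $\Phi(X)$ is exactly the leftmost admissible section and not merely an admissible one.
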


Let $X=X_{lu}\cup X_{rd}$. First, one checks that that $\Phi(X)$ determines a section over $\mc L^u_\lambda$, i.e. it intersects any leaf $\ell^u$ comparable with $\ell$ at most once. To see that $\Phi(X)$ is in fact the special section based at $z$, first note the following. For leaves in $\ell^s\subseteq\mc L^u$ for $\ell^s$ a stable leaf through $\ell$, we see that the sets $X_{lu}$ and $X_{rd}$ are the ``upper left" and ``lower right" halves of the frontier of $\mc S^u(\ell^s)\cap\mc S^s(\ell)$ in $\mc S^u(\ell^s)$, as in the left of \Cref{fig:more_left}.
 
Now suppose that $\Phi(X)$ disagrees with $s^\ell_z$. Then at some leaf $\lambda^u$ in some cylinder, we see the section $s^\ell_z$ going more left up (or right down) than the section $\Phi(X)$. We can approximate the cylinder with a cylinder over a stable leaf. Mapping this back to $\orb$, we see that this is incompatible with the description of $X$ above. See the purple segment in \Cref{fig:more_left}; it necessarily crosses stable leaves (i.e. markers).

\begin{figure}[h]
	\centering
	\includegraphics[width=0.9\linewidth]{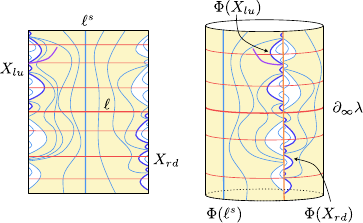}
	\caption{On the left: The rectangle is $\mc S^u(\ell^s)$ and the highlighted region is $\mc S^u(\ell^s)\cap\mc S^s(\ell)$. On the right: The image under $\Phi$. If $\Phi(X)$ were'nt leftmost, $s^\ell_z$ would contain something like the purple segment.}\label{fig:more_left}
\end{figure}

To extend this description to the rest of $\mc L^u$, we use the turning corners rule. 
Pick $\ell^u_1\in\ell^s\subseteq\mc L^u$ a leaf nonseparated from some $\ell^u_2$ comparable with $\ell$. By the turning corners rule, over lines through $\ell_2^u$, we have that \[s^\ell_z=s^\ell_x,\] where $x=s_\textbf{nm}(\ell^u_2)$. Thus, we can repeat the procedure stated above, and continue until we fill out all of $\mc L^u$. The same arguments as above show this builds $s^\ell_z$.

\section{The skew case}\label{sec:skew}
Before covering the non $\R$-covered case in \Cref{sec:distinct_ucs} and \Cref{sec:nonconjugate}, we briefly go over the skew case. While this material is already known to experts, we include it for completeness and to illustrate the use of \Cref{lem:leaf_shadow}, \Cref{thm:stitching}, and \Cref{prop:flow_section_in_orb} in a less complicated setting.

\subsection{Setup and notation}
Fix orientations on $\orb^{s/u}$ as in \Cref{fig:skew_setup}. Using \Cref{thm:stitching}, markers on $E_\infty$ are vertical, and the nonmarker curve is a helix as in \Cref{fig:skew_setup}.

\begin{figure}[h]
	\centering
	\includegraphics[width=\linewidth]{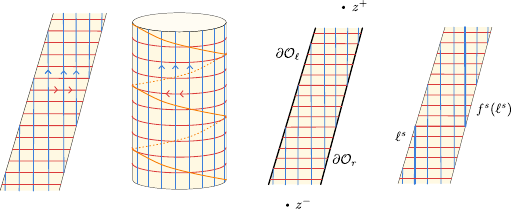}
	\caption{From left to right: (1) The orientations on $\orb^{s/u}$, (2) $E_\infty$, (3) The decomposition of $\partial\orb$ into $z^\pm$ and $\partial_{\ell/r}\orb$, (4) The one step up map $f^{s/u}$.}\label{fig:skew_setup}
\end{figure}

The boundary of the skew plane contains two points that aren't endpoints of leaves of $\orb^{s/u}$. These are the ``top" and ``bottom" of the infinite strip, and we'll denote them $z^+$ and $z^-$ respectively. The rest of the boundary is made of two open intervals $\partial_\ell \orb$ 
and $\partial_r\orb$, which each have a canonical identification with both $\mc L^u$ and $\mc L^s$ by considering the endpoints of leaves. We'll call these maps
\[h^{s/u}_{\ell/r}\colon \partial_{\ell/r}\orb \to\mc L^{s/u}.\]
See \Cref{fig:skew_setup}. 

We define the \defn{one step up} map
\[
f^{s/u}\colon \mc L^{s/u}\to\mc L^{s/u}
\] 
as in \Cref{fig:skew_setup}. See for example \cite[Section 2.3]{barthelme2025pseudo}.
Finally, we have a pair of involutions $\iota^{s/u}\colon\partial\orb\to\partial\orb$ 
defined by fixing $z^\pm$ and swapping endpoints of leaves of $\orb^s$ or $\orb^u$. 

\subsection{The Calegari--Dunfield universal circles}
A leftmost section $s^\ell_p$ in $E_\infty$ is either based on a marker or on the nonmarker section. In the first case, $s^\ell_p$ contains the marker it is based on, and follows the nonmarker curve elsewhere. In the second case, $s^\ell_p=s_\mathbf{nm}$ everywhere, as in \Cref{fig:skew_left}.

\begin{figure}[h]
	\centering
	\includegraphics[width=0.8\linewidth]{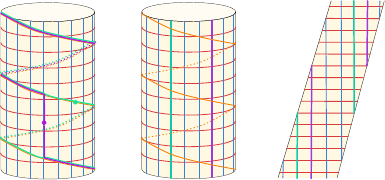}
	\caption{From left to right: (1) Leftmost sections in $E_\infty$, (2) Rightmost sections in $E_\infty$, (3) The images of rightmost sections in $\orb$ under $\Phi^{-1}$. }\label{fig:skew_left}
\end{figure}

The marker such a section is based at is parameterized by $\mc L^s$, and as the basepoint approaches either end of $\mc L^s$, the corresponding section converges pointwise to $s_\textbf{nm}$. The action of $\pi_1(M)$ on these sections is conjugate to the action on $\mc L^s$ when restricted to sections based on markers, and the nomarker section is a global fixed point. Thus, we can view $\mc C^\ell$ as the one point compactification of $\mc L^s$, with the corresponding action. This universal circle is minimal as a universal circle by construction, but isn't dynamically minimal due to the global fixed point.

A rightmost section is just a vertical line, as in \Cref{fig:skew_left}. Mapping to $\orb$ via $\Phi^{-1}$, we see that the markers making up a rightmost section are exactly an orbit of a leaf of $\orb^s$ under the one step up map $f$. Thus, we can identify $\mc C^r$ with its $\pi_1(M)$ action with $\mc L^s/f$. Via the map $h^s_r$, we can also view $\mc C^r$ as $\partial_r\orb/f$. Finally, the monotone maps for $\mc C^r$ are homeomorphisms that identify $\mc C^r$ to with $\partial_\infty \lambda$ for each $\lambda \in \mc L$.

This is again a minimal universal circle by construction. It is also dynamically minimal, since the action $\pi_1(M)\curvearrowright\mc L^s$ is.

\subsection{The flow space universal circles}
Now we tilt $\wt W^u$ to $\wt h_\pm(\wt W^u)$, defining universal circles $\partial\orb^\pm$ for $W^u$ as in \Cref{sec:tilting}. The action $\pi_1(M)\curvearrowright\partial\orb^\pm$ is just the action $\pi_1(M)\curvearrowright\partial\orb$. To find the monotone maps, it remains to describe the shadows.

Using \Cref{lem:leaf_shadow}, we see that the shadows $\Omega^\pm_\lambda$ are triangles, as in \Cref{fig:skew_shadows}. (This was originally described in \cite[Section 7]{fenley2005regulating}.) 
Thus, each monotone map $\pi^\pm_\lambda$ contains a single gap. For $\partial\orb^+$, the interval $\partial\orb_r$ is contained in every gap, and for $\partial\orb^-$, the interval $\partial\orb_\ell$ is contained in every gap. Thus, neither $\partial\orb^+$ nor $\partial\orb^-$ is minimal as a universal circle.

\begin{figure}
	\centering
	\includegraphics[width=0.5\linewidth]{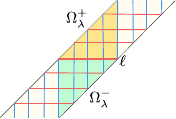}
	\caption{The shadows $\Omega^\pm_\lambda$ of a leaf $\lambda\in\mc L^u$.}\label{fig:skew_shadows}
\end{figure} 

Via \Cref{prop:flow_section_in_orb}, we see that the images of sections $\sigma^\pm_x$ in $\orb$ are single stable leaves, so the sets of sections of $\partial\orb^+$ and $\partial\orb^-$ are the same. The map $\iota^s\colon\partial\orb\to\partial\orb$
gives us an isomorphism of universal circles between $\partial\orb^+$ and $\partial\orb^-$.

Finally, collapsing $\partial_r \orb$ for $\partial\orb^+$ (or $\partial_\ell\orb$ for $\partial\orb^-$) gives us a monotone map to the circle $\mc C^\ell$, which was described above.

\section{Distinct universal circles}\label{sec:distinct_ucs}

In this section we prove \Cref{thm:distinct_ucs}. 

\begin{theorem}\label{thm:distinct_ucs}
	Suppose that $\varphi$ is not $\RR$-covered. Then the four universal circles $\mc C^\ell$, $\mc C^r$,  $\partial \orb^+$ and $\partial\orb^-$ of $W^u$ are distinct. Furthermore, no two of $\mc C^\ell$, $\mc C^r$, and $\partial\orb^+$ (respectively, $\partial\orb^-$) are covered by a common universal circle.
\end{theorem}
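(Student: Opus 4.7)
The plan is to translate every section of each of the four universal circles into a union of leaves of $\orb^s$ (together with the nonmarker section) via the stitching map $\Phi$ of \Cref{thm:stitching}, and then leverage two-sided branching of non $\RR$-covered Anosov flows \cite{fenley1995sided} to produce the required distinctions. \Cref{prop:flow_section_in_orb} already provides the translation for $\partial\orb^\pm$: sections are parameterized by backwards (respectively forwards) chains of stable leaves emanating from a boundary point $z \in \partial\orb$. For $\mc C^\ell$ and $\mc C^r$ I use the description developed at the end of \Cref{sec:sections_in_orb}, which identifies each leftmost (respectively rightmost) section with the union of stable leaves selected by the partition $X_{lu}\cup X_{rd}$ (respectively $X_{ru}\cup X_{ld}$) at every branching chain, extended across cataclysms by the turning corners rule.

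To prove the distinctness statement I compare the induced section families in $\orb$. The pair $\partial\orb^+$ versus $\partial\orb^-$ is handled directly from \Cref{prop:flow_section_in_orb}: taking a leaf $\ell^s$ admitting a nontrivial chain of nonseparated leaves branching from above, the backwards chain from its positive endpoint $z^+$ picks up these extra leaves while the forwards chain from $z^+$ does not, so the section families differ and the two universal circles cannot be isomorphic (cf.\ \Cref{lem:same_sections}). The other five pairs are handled by the no-common-cover statement below, of which nonisomorphism is a special case.

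The no-common-cover statement reduces to exhibiting, for each of the five pairs, a pair of sections — one from each universal circle — that actually cross in $E_\infty$. This suffices because any common cover forces the combined section family of its quotients to be a single noncrossing family in $E_\infty$: every section of the cover descends to a section of each quotient via the factor map, and sections arising from a single universal circle never cross. For $\mc C^\ell$ versus $\mc C^r$, the local ``leftmost/rightmost'' rules at a single branching chain force the corresponding sections onto opposite sides of the cataclysm, producing a crossing in a cylinder $E_\infty|_t$ through the cataclysm. The main obstacle is the four cross-comparisons between a Calegari--Dunfield circle and a flow-space circle: the leftmost rule resolves every cataclysm by a uniform local ``turn left'' convention, whereas the flow-space rule from \Cref{prop:flow_section_in_orb} resolves each cataclysm by a global condition depending on the position of $z \in \partial\orb$. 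To force disagreement I fix a specific chain, necessarily finite by \cite[Corollary F]{fenley1998structure}, whose $X_{lu}$ side lies opposite the side forced by the backwards chain from a carefully chosen $z$, use two-sided branching of $\orb^s$ and $\orb^u$ to ensure such configurations exist, and translate by an element of $\pi_1(M)$ to realize the witness crossing explicitly. Verifying that the resulting sections cross rather than merely differ requires a careful local analysis of the branching picture in $\orb$ along the lines developed in \Cref{sec:sections_in_orb}.
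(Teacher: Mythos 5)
Your overall strategy tracks the paper: reduce the no-common-cover claim to exhibiting crossing sections (this is exactly \Cref{lem:distinct_if_cross}), leverage two-sided branching, and translate by $\pi_1(M)$ to realize a witness configuration. But the central step---actually producing a crossing between a Calegari--Dunfield section and a flow-space section---is only gestured at, and you acknowledge as much (``verifying that the resulting sections cross rather than merely differ requires a careful local analysis''). That is precisely the hard part, and your proposed route through the $X_{lu}/X_{rd}$ description of leftmost sections is more indirect than necessary. The key observation the paper exploits, and which you omit, is \Cref{rem:nm_is_flowspace_section}: in a cylinder $E_\infty|_{t^s}$ over a stable leaf $t^s$, the flow-space sections $\sigma^+_z$ and $\sigma^-_x$ (for $z,x$ the endpoints of $t^s$) literally coincide with the nonmarker section $s_{\mathbf{nm}}$. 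Once you know this, all five remaining comparisons reduce to a single task: show that some leftmost section and some rightmost section must cross $s_{\mathbf{nm}}$ inside $E_\infty|_{t^s}$. The paper achieves this by using transitivity of the flow (hence density of $\pi_1(M)$-orbits in $\mc L^u$) to move translates of a branching-from-below pair and a branching-from-above pair into $t^s$ in an alternating configuration, which, via the markers $\Phi(g_i\cdot\ell^s)$ and $\Phi(h_j\cdot m^s)$, forces any leftmost section started below the configuration to jump across $s_{\mathbf{nm}}$ by the turning-corners rule. You never identify this mechanism, so the gap is real, not cosmetic.

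There is also a smaller issue with your treatment of $\partial\orb^+$ versus $\partial\orb^-$. Comparing the backwards and forwards chains emanating from a single boundary point $z^+$ only shows that $\sigma^+_{z^+}\neq\sigma^-_{z^+}$; to conclude nonisomorphism via \Cref{lem:same_sections} you must rule out $\sigma^-_x = \sigma^+_{z}$ for \emph{every} $z$. The paper's \Cref{prop:distinct_flowucs} handles this by choosing a maximal oriented chain $\ell_1,\dots,\ell_n$ (finite by \cite[Corollary F]{fenley1998structure}) and a point $x$ on the appropriate side, then using maximality to show the terminal leaf $\ell_n$ forces $z=\ell_n^+$, which in turn drags in $\ell_1$ and gives a contradiction. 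Your argument as written does not quantify over $z$ and so does not establish the claimed nonisomorphism.
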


Part of \Cref{thm:distinct_ucs} is the content of  \Cref{prop:distinct_flowucs}.

\begin{proposition}\label{prop:distinct_flowucs}
	Suppose $\varphi$ is not $\R$-covered. Then $\partial\orb^+$ and $\partial \orb^-$ are nonisomorphic as universal circles.
\end{proposition}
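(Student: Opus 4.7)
The plan is to show directly that the sets of sections of $\partial\orb^+$ and $\partial\orb^-$ differ, and then invoke the forward direction of Lemma~\ref{lem:same_sections} (which does not require minimality) to conclude that the two universal circles are nonisomorphic. The key tool will be Proposition~\ref{prop:flow_section_in_orb}, which translates each section $\sigma^\pm_z$ into an explicit union of stable leaves of $\orb$ assembled from forward or backward chains.

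To exhibit a distinguishing section, I first invoke Fenley's two-sided branching theorem: in the non $\R$-covered case with orientable foliations, there exist pairs $\ell_1^s, \ell_2^s$ of stable leaves nonseparated in $\orb^s$ and branching from above, connected by an intermediary unstable leaf $m^u$ making perfect fits with both. I fix such a pair, ideally with $\ell_1^s$ regular on its negative side so that ``downward'' chains from $\ell_1^-$ are minimally cluttered, and set $z = \ell_1^+ \in \partial\orb$, the perfect-fit corner shared by $\ell_1^s$ and $m^u$. By Proposition~\ref{prop:flow_section_in_orb}, $\Phi^{-1}(\sigma^+_z \setminus s_\textbf{nm})$ is the union of stable leaves in backward chains from $z$ together with those in backward chains opposite $z$ rooted at arbitrary stable leaves. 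Suppose for contradiction that $\sigma^+_z = \sigma^-_w$ for some $w \in \partial\orb$. Matching the length-one ``opposite'' chains (every regular stable leaf lies in $\Phi^{-1}(\sigma^+_z \setminus s_\textbf{nm})$ iff $z$ is not an endpoint of it) first forces $z$ and $w$ to be endpoints of exactly the same stable leaves, pinning $w$ down to the small set $\{z, \ell_1^-\}$. I then aim to rule out each candidate by examining the longer chain extensions, using the orientation condition $\ell_i^- = \ell_{i+1}^+$ (backward) versus $\ell_i^+ = \ell_{i+1}^-$ (forward) and the asymmetric interaction with the partner leaf $\ell_2^s$ and the shared corner $z = m^{u\pm}$.

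The main obstacle I expect is the final case analysis: producing for each candidate $w$ a stable leaf lying in $\Phi^{-1}(\sigma^+_z \setminus s_\textbf{nm})$ but not in $\Phi^{-1}(\sigma^-_w \setminus s_\textbf{nm})$. The orientation of the perfect fits with $m^u$, together with the finiteness of nonseparated chains in atoroidal manifolds (\cite[Corollary F]{fenley1998structure}), should force such a distinguishing leaf once the correct orientation bookkeeping is set up. Should this direct chain comparison prove cumbersome, a cleaner alternative is to argue at the level of monotone maps: by Lemma~\ref{lem:marker_gap}, the nonmarker gap of $\pi^+_\lambda$ lies on the positive side of $\Theta(\lambda)$ while that of $\pi^-_\lambda$ lies on the negative side, so any isomorphism $h : \partial\orb \to \partial\orb$ would have to swap the two sides of every unstable leaf of $\orb$ in $\partial\orb$; this strong symmetry should conflict with Fenley's two-sided branching combined with the equivariance of $h$ under $\pi_1(M)$.
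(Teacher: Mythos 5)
You have identified the right tools---the forward direction of \Cref{lem:same_sections} together with \Cref{prop:flow_section_in_orb}---and the right overall shape of argument, namely to suppose $\sigma^+_z = \sigma^-_w$ and derive a contradiction, which is indeed what the paper does. However, the claim on which your ``pinning down $w$'' step rests is false. For a \emph{regular} stable leaf $\ell$ there are no nontrivial chains through $\ell$: regular leaves make no perfect fits and hence share no ideal endpoints with any other stable leaf, so the ``opposite chain'' contribution to $\Phi^{-1}(\sigma^+_z \ssm s_\textbf{nm})$ in \Cref{prop:flow_section_in_orb} is vacuous for $\ell$. What is actually true (via \Cref{lem:marker_gap}, since the nonseparated chain $c'$ attached to a regular $\ell$ is empty) is that $\ell \subset \Phi^{-1}(\sigma^+_z \ssm s_\textbf{nm})$ if and only if $z = \ell^+$, the \emph{positive} ideal endpoint of $\ell$---the opposite of your parenthetical claim. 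Since your $z$ is a perfect-fit corner, it is not the positive endpoint of any regular stable leaf, so regular leaves contribute nothing, and comparing them to $\Phi^{-1}(\sigma^-_w)$ yields only that $w$ is not the negative endpoint of any regular stable leaf. That constraint nowhere near pins $w$ down to $\{z, \ell_1^-\}$, so the subsequent case analysis never gets off the ground.

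The paper instead fixes a \emph{maximal oriented chain} $\ell_1, \dots, \ell_n$ of length $n \geq 2$ in $\orb^s$, picks $x$ so that $x$ and $\ell_n^+$ lie on opposite sides of $\{\ell_1^+, \ell_1^-\}$ in $\partial\orb$, and observes via \Cref{prop:flow_section_in_orb} that $\Phi^{-1}(\sigma^-_x)$ contains $\ell_2, \dots, \ell_n$ but not $\ell_1$. Maximality of the chain forces any $z$ with $\ell_n \subset \Phi^{-1}(\sigma^+_z)$ to satisfy $z = \ell_n^+$, which then forces $\ell_1 \subset \Phi^{-1}(\sigma^+_z)$---the contradiction. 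Finally, be cautious about your proposed fallback via the positions of the nonmarker gaps of $\pi^\pm_\lambda$: in the skew case the equivariant involution $\iota^s$ genuinely does exchange the two endpoints of every stable leaf and gives an isomorphism $\partial\orb^+ \cong \partial\orb^-$, so any obstruction in the non $\R$-covered case must come from the branching structure rather than from equivariance constraining a ``side-swapping'' map alone.
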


Before proving \Cref{prop:distinct_flowucs}, we set the following notation. 
As in \Cref{sec:sections_in_orb}, given a leaf $\ell$ in $\orb^{s/u}$, let $\ell^+$ and $\ell^-$ denote its forward and backward endpoints in $\partial\orb$, respectively. A chain $\ell_1,\ldots,\ell_n$ of leaves in $\orb^s$ is said to be a \defn{maximal chain} if it is maximal with respect to inclusion among chains in $\orb^s$, i.e. it cannot be extended. 

A chain is \defn{oriented} if $\ell_i^+ = \ell_{i+1}^-$ for all $i<n$. An oriented chain is a \defn{maximal oriented chain} if it is maximal with respect to inclusion among all oriented chains, i.e. it cannot be extended as an oriented chain. Note that a maximal oriented chain is a maximal chain -- the existence of a leaf of $\orb^s$ extending the chain in a way that breaks orientability forces the existence of a leaf respecting the orientation of the chain. This follows from the fact that the stable (or unstable) leaves limiting to a fixed point in $\partial \orb$ have orientations that alternate.

As recorded in \Cref{sec:struct}, since $M$ is atoroidal, all chains in $\orb^s$ have finite length.

\begin{proof}[Proof of \Cref{prop:distinct_flowucs}]
	
	\begin{figure}[h]
		\centering
		\includegraphics[width=0.4\linewidth]{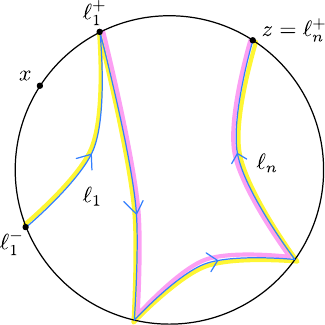}
		\caption{The leaves highlighted in yellow are part of $\Phi^{-1}(s^+_z)$, and the leaves highlighted in pink are part of $\Phi^{-1}(s^-_x)$.}\label{fig:distinct_flowspace_ucs}
	\end{figure}
	
	Let $\ell_1,\ldots,\ell_n$ be a maximal oriented chain in $\orb^s$ with $n\geq2$. Pick $x\in\partial\orb$ such that $x$ and $\ell_n^+$ lie in different connected components of $\partial\orb\ssm\{\ell_1^+,\ell_1^-\}$, as in \Cref{fig:distinct_flowspace_ucs}. Then by \Cref{prop:flow_section_in_orb}, $\Phi^{-1}(\sigma^-_x)$ contains $\ell_2$ through $\ell_n$, but not $\ell_1$.
	
	Suppose $\partial\orb^+\cong\partial\orb^-$. By \Cref{lem:same_sections}, we must have $\sigma^-_x=\sigma^+_z$ for some $z\in\partial\orb$. Since $\ell_n\subset \Phi^{-1}(\sigma^-_x)$, for $\ell_n$ to be in $\Phi^{-1}(\sigma^+_z)$ we must have that $\ell_n$ is in a backwards chain from $z$ or in a backwards chain from $\ell$ opposite $z$ for some leaf $\ell\in\orb^s$, as in \Cref{prop:flow_section_in_orb}. However, $\ell_n$ is the last leaf in a maximal chain, so the only possibility is that $z=\ell_n^+$. But then by \Cref{prop:distinct_flowucs} we see that $\ell_1\subset \Phi^{-1}(\sigma^+_z)$. Thus, no choice of $z$ works.
\end{proof}

We will now prove the remainder of \Cref{thm:distinct_ucs} by way of \Cref{lem:distinct_if_cross}. 

\begin{lemma}\label{lem:distinct_if_cross}
	
If two universal circles $\mc C_1$ and $\mc C_2$ have sections that cross, they are nonisomorphic and have no common cover.
\end{lemma}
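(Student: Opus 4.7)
The proof will hinge on the basic noncrossing principle for sections of a single universal circle: for any two points $z,z' \in \mc C$ and any leaf $\lambda \in \mc L^u$, the images $\pi_\lambda(z)$ and $\pi_\lambda(z')$ inherit their cyclic order on $\partial_\infty\lambda$ monotonically from the cyclic order on $\mc C$, so the sections $\sigma_z$ and $\sigma_{z'}$ appear in a consistent cyclic position on every circle fiber and in particular never cross. Both halves of the lemma then reduce to showing that two crossing sections must both be sections of some single universal circle.

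For the nonisomorphism statement, the plan is to invoke Lemma \ref{lem:same_sections} directly. If $\mc C_1 \cong \mc C_2$ as universal circles, that lemma yields $\mc S_1 = \mc S_2$, so the two hypothesized crossing sections are both sections of the single universal circle $\mc C_1$, contradicting the noncrossing principle.

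For the common cover statement, I interpret ``common cover'' to mean a universal circle $\mc C$ equipped with $\pi_1(M)$-equivariant monotone maps $p_i \colon \mc C \to \mc C_i$ that intertwine the respective collections of monotone maps, i.e. $\pi^{\mc C_i}_\lambda \circ p_i = \pi^{\mc C}_\lambda$ for all $\lambda \in \mc L^u$ and $i=1,2$. Given a section $\sigma^{\mc C_i}_{z_i}$ of $\mc C_i$, pick any preimage $w_i \in p_i^{-1}(z_i)$, which exists because a degree one monotone map of circles is surjective. A direct check gives
\[\sigma^{\mc C}_{w_i}(\lambda) \;=\; \pi^{\mc C}_\lambda(w_i) \;=\; \pi^{\mc C_i}_\lambda(p_i(w_i)) \;=\; \pi^{\mc C_i}_\lambda(z_i) \;=\; \sigma^{\mc C_i}_{z_i}(\lambda)\]
for every $\lambda$, so $\sigma^{\mc C_i}_{z_i} \in \mc S^{\mc C}$. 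Thus $\mc S_1 \cup \mc S_2 \subseteq \mc S^{\mc C}$, and the crossing sections of $\mc C_1$ and $\mc C_2$ would then both lie in $\mc S^{\mc C}$, contradicting the noncrossing principle once more.

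The main (mild) obstacle is pinning down the correct notion of ``common cover''; once we fix that definition in the natural structure-preserving way above, both parts of the lemma follow formally from the noncrossing principle for sections of a single universal circle. No geometric input about markers, the flow space, or the Anosov dynamics is used at this step.
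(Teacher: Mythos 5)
Your proposal is correct and follows essentially the same route as the paper: both halves reduce to the noncrossing principle for sections of a single universal circle, with the isomorphism case handled by \Cref{lem:same_sections} and the cover case handled by observing that sections of $\mc C_1$ and $\mc C_2$ lift to sections of the cover. The paper's proof is terser (it does not spell out the definition of a common cover or the lifting computation), but the content is identical; your explicit definition of ``common cover'' in terms of intertwining monotone maps and the verification that $\mc S_1 \cup \mc S_2 \subseteq \mc S^{\mc C}$ matches the paper's implicit usage.
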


\begin{proof}
	By \Cref{lem:same_sections}, isomorphic universal circles have the same set of sections. Since no two sections of a single universal circle cross, the circles can't be isomorphic.
	
	If $\mc C_1$ and $\mc C_2$ were covered by a common circle $\mc C$, the sections of $\mc C_1$ and $\mc C_2$ would all be sections of $\mc C$, so $\mc C$ would have sections that cross.
\end{proof}

Thus, we will analyse $\mathcal{C}^\ell$, $\mathcal{C}^r$, and $\partial\orb^+$ (respectively $\partial\orb^-$) by comparing their sets of sections in $E_\infty$. In light of \Cref{cor:admissible}, any differences of the sections of $\mathcal{C}^\ell$, $\mathcal{C}^r$, and $\partial\orb^\pm$ must occur around the nonmarker section. We will now prove \Cref{thm:distinct_ucs} by finding sections that cross at the nonmarker section. 

\begin{proof}[Proof of \Cref{thm:distinct_ucs}]
Let $t^s$ be a leaf of $\mc O^s$, determining a line in $\mc L^u$. Let $z\in\partial\orb$ be the backwards endpoint of $t^s$, and let $x$ be the forwards endpoint. By the description of shadows given in \Cref{lem:leaf_shadow}, $z$ lies in the nonmarker gap for every leaf $\lambda\in t ^s\subseteq\mc L^u$. Thus, in $E_\infty\mid_{t^s}$, the flow space sections $\sigma^{+}_z$ and $\sigma^-_x$ agree with the nonmarker section $s_\textbf{nm}$, as in \Cref{rem:nm_is_flowspace_section}.

\begin{figure}[h!]
	\centering
	\includegraphics[width=0.8\linewidth]{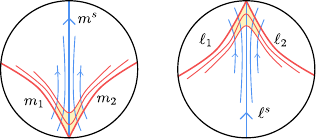}
	\caption{The configuration of the branching leaves in the proof of \Cref{thm:distinct_ucs}. Perfect fit rectangles are highlighted in yellow. The arrows on stable leaves indicate the corientation of $\orb^u$. }\label{fig:two_sided_branching}
\end{figure}

On the other hand, some section of $\mc C^\ell$ and of $\mc C^r$ must cross $s_\textbf{nm}$ in $E_\infty\mid_{t^s}$ by the following argument. Recall that the foliation $\widetilde{ W^u}$ has two-sided branching (\Cref{sec:struct}). This means there exists a pair of adjacent  nonseparated leaves $\lambda_1$ and $\lambda_2$ that are branching from below, and another pair $\mu_1$ and $\mu_2$ that are branching from above. Projecting to the flow space, we assume $\ell_1$ is on the left of $\ell_2$ and that $m_1$ is on the left of $m_2$. The $\ell_i$'s are separated by a stable leaf $\ell^s$, and the $m_i$'s are separated by a stable leaf $m^s$. See \Cref{fig:two_sided_branching} for the configurations of these leaves.

As $\varphi$ is a transitive Anosov flow, the $\pi_1(M)$-orbit of every leaf of $\widetilde{W^u}$ is dense in $\mc L^u$. Thus, we can move each of $\lambda_i$ and $\mu_i$ into $t^s\subseteq \mc L^u$. In particular, we do this so that moving from the bottom to the top of $t^s$, we see a translate of $g_1\cdot\lambda_1$ of $\lambda_1$, then $h_2\cdot\mu_2$, then $g_2\cdot\lambda_2$, and then $h_1\cdot\mu_1$, as in \Cref{fig:crossing_snm}. 

\begin{figure}[h!]
	\centering
	\includegraphics[width=0.9\linewidth]{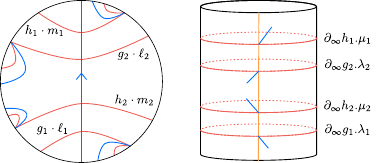}
	\caption{The configuration of translates of the branching leaves in $\orb$ and in  $E_\infty\mid_{t^s}$.}\label{fig:crossing_snm}
\end{figure}

By the description of $E_\infty$ over lines given in \Cref{subsec:over_lines}, translates of $\ell^s$ and $m^s$ give us markers in $E_\infty\mid_{t^s}$ under the map $\Phi$, arranged as in \Cref{fig:crossing_snm}. From here, it is easy to see that some sections of $\mc C^\ell$ and $\mc C^r$ must cross $s_\textbf{nm}\mid_{t^s}$. Consider the section of $\mc C^\ell$ based at a point on the marker $\Phi(g_1\cdot\ell^s)$. Going up, it will never take a marker coming off the right of $s_\textbf{nm}$, as that will never be further left than staying on $s_\textbf{nm}$. Thus, the section will cross $s_\textbf{nm}$ or travel along it. If it doesn't cross $s_\textbf{nm}$ before it gets to $\partial_\infty h_2\cdot\mu_2$, it certainly will at that point and follow along the marker $\Phi(h_1\cdot m^s)$. A similar argument for a rightmost section based at a point on $\Phi(g_2\cdot\ell^s)$ works for $\mc C^r$. Finally, it is clear from the picture that sections of $\mc C^\ell$ and $\mc C^r$ must cross each other.
\end{proof}

\begin{remark}\label{rem:totally_disconnected}
	In fact, the proof shows slightly more -- by the density of branching leaves, and thus the density of marker endpoints on $s_\textbf{nm}$, any special section (of $\mc C^\ell$ or $\mc C^r$) intersects $s_\textbf{nm}$ in a totally disconnected set.
\end{remark}

\section{Nonconjugate universal circle actions}\label{sec:nonconjugate}
Our goal in this section is to prove \Cref{thm:not_conj}. Note that the theorem holds when $\mc C^\ell$ is replaced by $\mc C^r$.

\begin{theorem}\label{thm:not_conj}
	Suppose $\varphi$ is not $\R$-covered. Then the actions $\pi_1(M)\curvearrowright\mathcal{C}^\ell$ and $\pi_1(M)\curvearrowright\partial\orb$ are not conjugate.
\end{theorem}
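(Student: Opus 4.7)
The approach is to exhibit an element $g \in \pi_1(M)$ whose fixed-point structure on $\partial\orb$ differs from that on $\mc C^\ell$. Since topological conjugacy of circle actions preserves the cardinality, cyclic arrangement, and attractor/repeller dynamics of $\Fix(g)$ for each $g$, finding any such discrepancy rules out conjugacy of the two actions.

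I will take $g$ to represent a closed orbit of $\varphi$ in \emph{generic} position, meaning its lifted orbit $\wt\gamma \subset \wt M$ projects to a point $p = \Theta(\wt\gamma) \in \orb$ lying on no branching leaf of $\orb^{s/u}$ and involved in no perfect-fit rectangle. Such $g$ exist in abundance, since closed orbits of the transitive flow $\varphi$ are dense in $M$ and the branching/perfect-fit locus is a meager $\pi_1(M)$-invariant subset of $\orb$. On $\partial\orb$, a direct argument then shows $|\Fix(g)\cap \partial\orb| = 4$: the only $g$-fixed point of $\orb$ is $p$, so the only $g$-invariant leaves of $\orb^{s/u}$ are $\ell^s = \orb^s(p)$ and $\ell^u = \orb^u(p)$, and their four boundary ends $(\ell^s)^\pm, (\ell^u)^\pm$ are the only fixed points on $\partial\orb$. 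Moreover, the two ends of $\ell^s$ are attracting and the two ends of $\ell^u$ are repelling, arranged alternately around the circle.

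On $\mc C^\ell$, I use the leftmost construction together with \Cref{thm:stitching} to identify $\Fix(g)$. The only $g$-fixed leaf in $\mc L^u$ is $\lambda_0$ (containing $\wt\gamma$), and $g$ fixes exactly two points on $\partial_\infty\lambda_0$: the nonmarker point $\theta_{\lambda_0}$ and the forward endpoint $\eta_p$ of $\wt\gamma$, which lies on a marker. By equivariance of the leftmost construction, the corresponding leftmost sections $s^\ell_{\theta_{\lambda_0}}$ and $s^\ell_{\eta_p}$ are $g$-fixed, and no other $g$-fixed leftmost section exists because every fixed $q \in E_\infty$ must lie on the circle fiber of a $g$-fixed leaf. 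The plan is to conclude that $|\Fix(g)\cap \mc C^\ell| = 2$, or, failing that, that the fixed points on $\mc C^\ell$ cluster into two preimage arcs (one above $\theta_{\lambda_0}$, one above $\eta_p$) rather than interleaving attractors and repellers as on $\partial\orb$. Either conclusion suffices to preclude conjugacy.

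The main obstacle is controlling fixed points introduced by the completion step in the construction of $\mc C^\ell$: \emph{a priori} the preimages $(\pi^\ell_{\lambda_0})^{-1}(\theta_{\lambda_0})$ and $(\pi^\ell_{\lambda_0})^{-1}(\eta_p)$ could be nondegenerate $g$-invariant intervals whose endpoints are additional $g$-fixed points, potentially matching the count on $\partial\orb$. Analyzing these preimages — or otherwise showing that any extra endpoints sit in the wrong cyclic pattern — will rely on \Cref{thm:stitching}, which maps the markers approaching $s_{\mathbf{nm}}$ near $\lambda_0$ into the flow space $\orb$, together with \Cref{rem:totally_disconnected}, which constrains how leftmost sections can touch the nonmarker section. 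This is the one step where the detailed structure of non-$\R$-covered branching developed in \Cref{sec:para} is indispensable.
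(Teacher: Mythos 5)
Your overall strategy is the same as the paper's: choose $g\in\pi_1(M)$ with a unique fixed point $p\in\orb$, count four alternating fixed points on $\partial\orb$ (via \cite[Proposition 3.7]{BFM}, as the paper does), and show that the dynamics on $\mc C^\ell$ are different. You also correctly identify the crux: a priori the gaps $(\pi^\ell_{\lambda_0})^{-1}(\theta_{\lambda_0})$ and $(\pi^\ell_{\lambda_0})^{-1}(\Phi(p))$ could be nondegenerate, contributing extra fixed points. But your write-up stops there; that step is precisely the content of the paper's \Cref{lem:small_markers}--\Cref{lem:special_sink} and \Cref{prop:nconj_sink_source}, and it is where all the work lies. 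As it stands, what you present is a plan for the key step, not a proof of it.

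Two concrete issues. First, your fallback --- ``showing that any extra endpoints sit in the wrong cyclic pattern'' --- is not automatic. Since $\pi^\ell_{\lambda_0}$ is $g$-equivariant and monotone, $\Fix(g)\cap\mc C^\ell$ does indeed lie in the two disjoint arcs $(\pi^\ell_{\lambda_0})^{-1}(\theta_{\lambda_0})$ and $(\pi^\ell_{\lambda_0})^{-1}(\Phi(p))$. But a four-point fixed set consisting of the two endpoint pairs of these arcs can still be arranged with alternating sink/source dynamics on $\mc C^\ell$ (flow across the interior of each gap from one endpoint to the other, and flow across each complementary arc the matching way). So clustering into two arcs alone does not preclude conjugacy; you really must show there are exactly two fixed points, which requires establishing global sink--source dynamics. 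The paper does this by proving (i) a small-marker lemma near $z=s_{\mathbf{nm}}(\lambda^u)$, (ii) pointwise convergence $g^{-n}(s^\ell_x)\to s^\ell_z$ and $g^n(s^\ell_x)\to s^\ell_{\Phi(p)}$ for $x\in\partial_\infty\lambda^u$ via a ``fencing-in'' argument with markers, using the turning-corners rule to control incomparable leaves and a properness lemma (\Cref{lem:prop_emb}) to ensure the residual leaves are handled; and (iii) a sandwiching argument to upgrade this to the whole of $\mc C^\ell$. None of this is a routine verification.

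Second, your genericity hypothesis is stronger than what is needed and carries its own burden: to get four fixed points on $\partial\orb$ via \cite[Proposition 3.7]{BFM} you only need $g$ to fix a unique point of $\orb$, and the paper gets density of such elements directly from \cite[Lemma 2.30]{BFM}. Requiring that $p$ lie on no branching leaf and be involved in no perfect fit is an extra condition, and density of closed orbits plus meagerness of the branching locus does not immediately give you density of orbits satisfying it without further argument. The extra hypotheses are not used elsewhere in your sketch, so I would drop them and follow the paper's selection of $g$.
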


To see that the actions $\pi_1(M)\curvearrowright\mathcal{C}^\ell$ and $\pi_1(M)\curvearrowright\partial\orb$ aren't conjugate, we'll study the dynamics of a group element $g\in\pi_1(M)$ acting with a single fixed point in the interior of $\orb$. Such elements exist; in fact, the fixed points of these elements are dense by \cite[Lemma 2.30]{BFM}.
So, pick $g \in \pi_1(M)$ that fixes a unique $p \in \orb$ as well as the leaves $\ell^s$, $\ell^u$ of $\orb^s$ and $\orb^u$ with $p=\ell^s\cap\ell^u$.  Up to replacing $g$ by its inverse, we may suppose that it acts on $\ell^u$ with contracting dynamics and $\ell^s$ with expanding dynamics.

By \cite[Proposition 3.7]{BFM}, $g$ acts on $\partial\orb$ with four fixed points, the endpoints of $\ell^s$ and $\ell^u$, and these fixed points alternate between sinks and sources. We will see that $g$ acts on $\mathcal{C}^\ell$ with a single sink and a single source, so the two actions can't be conjugate. To see this, we need to understand the dynamics of $g$ on $E_\infty$.

Since $g$ fixes $\ell^s$, it fixes the cylinder $E_\infty\mid_{\ell^s}$. To understand the dynamics of $g$ on this cylinder, we use the stitching map $\Phi$. Keeping with our convention, we denote by $\lambda^u$ the leaf of $\wt W^u$ sent to $\ell^u$ by $\Theta$. We have that \[\Phi(\ell^u)=\partial_\infty\l^u\ssm s_\textbf{nm}(\l^u),\] and $\Phi(\ell^s)$ is a marker. Their intersection, $\Phi(p)$, is a fixed point for $g$. 
On $\Phi(\ell^u)$ we see contracting dynamics, and on $\Phi(\ell^s)$ we see expanding dynamics. The point $s_\textbf{nm}(\l^u)$ is also fixed, so $g$ acts on $\partial_\infty\l^u$ with sink source dynamics, where $s_\textbf{nm}(\l^u)$ is the sink and $\Phi(p)$ is the source.

\begin{figure}[h!]
	\centering
	\includegraphics[width=0.5\linewidth]{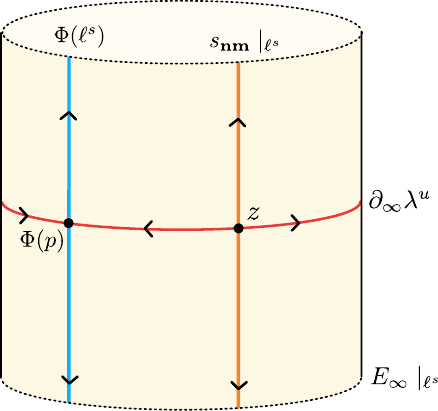}
	\caption{The dynamics of $g$ on $E_\infty\mid_{\ell^s}$.}\label{fig:non_conj_setup}
\end{figure} 

As $p$ is the only fixed point of $g$ in $\orb$, $\Phi(p)$ is the only fixed point in $E_\infty\ssm s_\textbf{nm}$. As for $s_\textbf{nm}$, the dynamics of $g$ are conjugated by the map $h\colon \mathcal{L}^u\rightarrow s_\textbf{nm}$ defined by $h(\lambda)=s_\textbf{nm}(\lambda)$. Thus, the only fixed point on $s_\textbf{nm}$ is $s_\textbf{nm}(\l^u)$, and $g$ acts on $s_\textbf{nm}\mid_{\ell^s}$ with expanding dynamics. To simplify notation, let $z=s_\textbf{nm}(\l^u)$. \Cref{fig:non_conj_setup} illustrates the situation so far.

It's clear there are at least two fixed points of $g$ on $\mathcal{C}^\ell$, those corresponding to the special sections $s^\ell_{\Phi(p)}$ and $s^\ell_z$. We will show that any other point in $\mathcal{C}^\ell$ converges to $s^\ell_z$ under backward iteration by $g$ and to $s^\ell_{\Phi(p)}$ under forward iteration, showing that $s^\ell_{\Phi(p)}$ and $s^\ell_z$ are the only two fixed points. We do this by showing that it is true for special sections based at points on $\partial_\infty\l^u$. Then by picking such a section in each connected component of $\mathcal{C}^\ell\ssm\{s^\ell_z,s^\ell_{\Phi(p)}\}$, we have that $g$ acts on $\mathcal{C}^\ell$ with sink source dynamics.

The first step towards showing this is noting that near $z$ there are arbitrarily small markers. This will imply that special sections based at points near $z$ quickly agree with $s^\ell_z$. We make the first claim precise below.

\begin{lemma}\label{lem:small_markers}
	Let $p\in\orb$, $g\in\pi_1(M)$, $\ell^u\in\orb^u$, and $\ell^s\in\orb^s$ be as in the setup so far. Then for any neighborhood \[U\subseteq\ell^s\subseteq\mathcal{L}^u\] of $\l^u$, there are maximal 
markers $m_U^\ell,m_U^r\subseteq E_\infty\mid_{\ell^s}$ that instersect $\partial_\infty\l^u$, are contained in $E_\infty\mid_U$, and such that $m_U^\ell$ is to the left of $s_\textbf{nm}\mid_{\ell^s}$ and $m_U^r$ is to the right, in the sense of \Cref{sec:twisting}.
\end{lemma}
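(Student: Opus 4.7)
The plan is to first produce, for each side of $s_{\textbf{nm}}|_{\ell^s}$, an initial marker in $E_\infty|_{\ell^s}$ crossing $\partial_\infty\l^u$, and then to shrink it into $E_\infty|_U$ by applying a large power of $g^{-1}$.

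For the existence step, I would pick points $q^\ell, q^r \in \ell^u$ lying on opposite sides of $p$, and let $\nu^\ell, \nu^r \in \orb^s$ be the stable leaves through them; both are distinct from $\ell^s$. By \Cref{thm:stitching} the markers $\Phi(\nu^\ell)$ and $\Phi(\nu^r)$ meet $\partial_\infty\l^u$ at the distinct points $\Phi(q^\ell)$ and $\Phi(q^r)$, respectively, and these points lie in different arcs of $\partial_\infty\l^u \smallsetminus \{z,\Phi(p)\}$. The two disjoint sections $s_{\textbf{nm}}|_{\ell^s}$ and $\Phi(\ell^s)$ split the cylinder $E_\infty|_{\ell^s}$ into two open regions, and since markers cannot cross $s_{\textbf{nm}}$ or other markers, each of $\Phi(\nu^\ell), \Phi(\nu^r)$ lies entirely in one region. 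Because their intersections with $\partial_\infty\l^u$ fall on opposite arcs, they lie in opposite regions, i.e., on opposite sides of $s_{\textbf{nm}}|_{\ell^s}$; up to relabeling, $\Phi(\nu^\ell)$ is on the left and $\Phi(\nu^r)$ on the right.

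For the shrinking step, recall that $g$ fixes $\ell^s$ as a line in $\mc L^u$ and acts there as an expansion with unique fixed point $\l^u$, so $g^{-1}$ contracts $\ell^s$ toward $\l^u$. Write $I_\nu = \nu \cap \ell^s$ for the interval in $\ell^s$ containing $\l^u$ over which $\Phi(\nu) \cap E_\infty|_{\ell^s}$ is supported. Provided $I_\nu$ is bounded, $g^{-n}(I_\nu) \subseteq U$ for all sufficiently large $n$, and setting
\[
m_U^\ell := \Phi\bigl(g^{-n}\nu^\ell\bigr), \qquad m_U^r := \Phi\bigl(g^{-n}\nu^r\bigr)
\]
gives maximal markers in $E_\infty|_{\ell^s}$ (since $g^{-n}$ preserves the marker structure) that are contained in $E_\infty|_U$, still meet $\partial_\infty\l^u$ (since $g$ fixes $\l^u$), and retain their sides relative to $s_{\textbf{nm}}|_{\ell^s}$ (since the $\pi_1(M)$-action preserves $s_{\textbf{nm}}$ setwise and orientations on $E_\infty$).

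The expected main obstacle is verifying that $I_{\nu^\ell}$ and $I_{\nu^r}$ are bounded, i.e., that the stable leaves $\nu^\ell, \nu^r$ exit the unstable saturation $\mc S^u(\ell^s)$ on both sides rather than accompanying $\ell^s$ out to infinity. This is where the non-$\R$-covered hypothesis (and the atoroidality of $M$) should enter: the frontier of $\mc S^u(\ell^s)$ in $\orb$ is nonempty and made of unstable leaves, which forces a generic stable leaf through $\ell^u$ to have bounded intersection with $\ell^s$. In the event the initial choice fails, a $\pi_1(M)$-translate placed inside a branching or perfect-fit configuration adjacent to $\ell^s$ can be substituted to secure boundedness before applying $g^{-n}$.
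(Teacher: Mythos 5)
Your overall strategy matches the paper's: choose a stable leaf $\nu$ crossing $\ell^u$ (hence a marker $\Phi(\nu)$ through $\partial_\infty\l^u$) on each side of $s_{\textbf{nm}}|_{\ell^s}$, then apply $g^{-n}$ to push the marker into $E_\infty|_U$. Your topological argument for why $\Phi(\nu^\ell)$ and $\Phi(\nu^r)$ land on opposite sides of $s_{\textbf{nm}}|_{\ell^s}$ (using that markers can cross neither $s_{\textbf{nm}}$ nor $\Phi(\ell^s)$) is fine, and is a bit more careful than what the paper writes.

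The genuine gap is exactly where you flagged it: you reduce the shrinking step to the boundedness of the support interval $I_\nu \subset \ell^s$, but you do not actually prove it, and the workaround you sketch (appealing to the frontier of $\mc S^u(\ell^s)$ being nonempty, or replacing $\nu$ by a translate near a branching configuration) is vague and not clearly sufficient. Boundedness of $I_\nu$ is precisely what fails in the skew case (there every stable leaf crosses every unstable leaf), and it is not self-evident in the branching case either for an arbitrary choice of $\nu$. The paper handles this differently, and this is the idea your proposal is missing: rather than reasoning inside $\ell^s\subset\mc L^u$, look at the dynamics of $g$ on $\partial\orb$. Since $g$ has exactly four fixed points on $\partial\orb$ (the endpoints of $\ell^s$ and $\ell^u$, alternating sinks and sources), and since the two endpoints of $\nu = t^s$ on $\partial\orb$ are \emph{not} fixed by $g$ and both lie in the arcs adjacent to one endpoint $\xi$ of $\ell^u$, they both converge to $\xi$ under $g^{-n}$. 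Hence $g^{-n}(t^s)\to\{\xi\}$ in $\overline{\orb}$, which forces the support of $\Phi(g^{-n}(t^s))$ in $\ell^s\subset\mc L^u$ to shrink into $U$ for $n$ large — and in particular shows a posteriori that $I_\nu$ was bounded. Replacing your boundedness discussion with this $\partial\orb$-dynamics observation closes the gap and is simpler than the branching-based patch you suggest.
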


\begin{figure}[h]
	\centering
	\includegraphics[width=\linewidth]{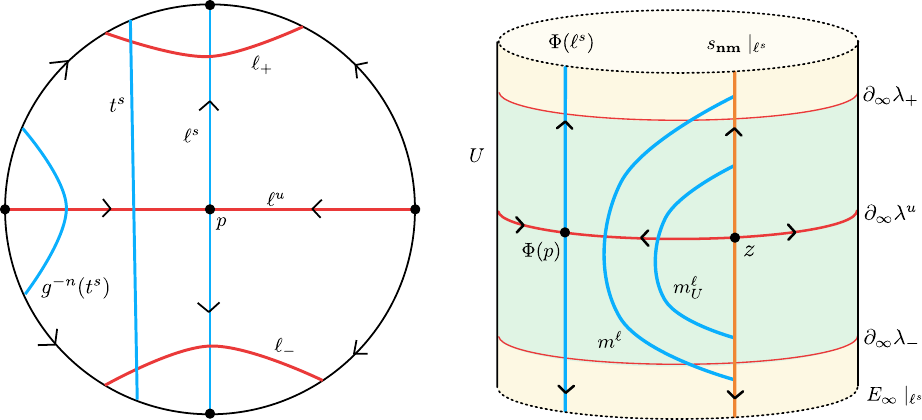}
	\caption{The setup in the proof of \Cref{lem:small_markers}. On the left is the setup in $\orb$, and on the right we translate this to $E_\infty\mid_{\ell^s}$. The region $U$ is highlighted in green.}
	\label{fig:small_markers}
\end{figure}

\begin{proof}
	 Let $U$ be as above, and let $\lambda_+$ and $\lambda_-$ be the leaves of $\mc L^u$ such that $\partial_\infty\lambda_+$ and $\partial_\infty\lambda_-$ bound $U$ from above and below respectively. See \Cref{fig:small_markers}.

	Let $m^\ell=\Phi(t^s)$ be any marker intersecting $\partial_\infty\l^u$ between $z$ and $\Phi(p)$ and to the left of $s_\textbf{nm}\mid_{\ell^s}$. We claim that \[g^{-n}(m^\ell)\subseteq E_\infty\mid_{\ell^s}\] for some $n\geq0$. To see this, we work in $\orb$ and use what we know about the dynamics of $g$ on $\partial\orb$. The endpoints of $t^s$ approach the righthand endpoint of $\ell^u$ under negative iteration.
		
	 Letting \[m_U^\ell=g^{-n}(m^\ell),\] we've found a marker that fits the bill.
	
	Starting with an analogous marker $m^r$ to the right of $s_\textbf{nm}\mid_{\ell^s}$ and following the same construction yields $m_U^r$.
\end{proof}

Now we are ready to show that $s^\ell_z$ is a source, at least for special sections based at points on $\partial_\infty\l^u$.

\begin{lemma}\label{lem:special_source}
	Let $x\in\partial_\infty\l^u\ssm\{\Phi(p)\}.$ Then \[g^{-n}(s^\ell_x)\rightarrow s_z^\ell\text{ as }n\rightarrow\infty.\]
\end{lemma}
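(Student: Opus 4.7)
The plan is to use equivariance of the leftmost section construction to reduce the claim to dynamics on the single fiber $\partial_\infty\lambda^u$, and then to invoke \Cref{lem:small_markers} to upgrade that fiber convergence to convergence in the universal circle $\mathcal{C}^\ell$.

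First, by $\pi_1(M)$-equivariance of the leftmost section construction, one has
\[
g^{-n}(s^\ell_x) = s^\ell_{g^{-n}(x)}
\]
for every $n \geq 0$. The element $g$ acts on the fiber $\partial_\infty\lambda^u$ with two fixed points $z$ and $\Phi(p)$ in sink-source configuration, inherited from the contracting dynamics on $\ell^u$; consequently under $g^{-1}$ the point $z$ is attracting and $\Phi(p)$ is repelling. Since $x \neq \Phi(p)$, the backward iterates $g^{-n}(x)$ converge to $z$ in $\partial_\infty\lambda^u$.

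Next, to convert this fiber convergence into convergence in $\mathcal{C}^\ell$, the idea is to bracket $s^\ell_z$ by special sections built from arbitrarily small markers. Given any open arc $V \ni s^\ell_z$ in $\mathcal{C}^\ell$, take a neighborhood $U$ of $\lambda^u$ in $\ell^s \subseteq \mathcal{L}^u$ and apply \Cref{lem:small_markers} to produce maximal markers $m_U^\ell, m_U^r \subseteq E_\infty|_U$ on either side of $s_\textbf{nm}|_{\ell^s}$, meeting $\partial_\infty\lambda^u$ at points $y_U^\ell$ and $y_U^r$ on opposite sides of $z$. The three special sections $s^\ell_{y_U^\ell}, s^\ell_z, s^\ell_{y_U^r}$ take values $y_U^\ell, z, y_U^r$ on the fiber $\partial_\infty\lambda^u$, and by monotonicity of $\pi^\ell_{\lambda^u}$ they appear in the same circular order in $\mathcal{C}^\ell$. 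Hence the short arc from $s^\ell_{y_U^\ell}$ to $s^\ell_{y_U^r}$ through $s^\ell_z$ is an open neighborhood of $s^\ell_z$, and with $U$ small enough it lies inside $V$. For $n$ large enough that $g^{-n}(x)$ lies in the short fiber arc between $y_U^\ell$ and $y_U^r$, the section $s^\ell_{g^{-n}(x)}$ lies in the short arc of $\mathcal{C}^\ell$ bounded by $s^\ell_{y_U^\ell}$ and $s^\ell_{y_U^r}$, hence inside $V$.

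The hard part will be verifying that the arcs between $s^\ell_{y_U^\ell}$ and $s^\ell_{y_U^r}$ (through $s^\ell_z$) actually form a neighborhood basis of $s^\ell_z$ as $U$ shrinks, equivalently that $(\pi^\ell_{\lambda^u})^{-1}(z) = \{s^\ell_z\}$. The key input is that no marker of $E_\infty$ passes through the nonmarker point $z$ (by \Cref{lem:nonmarker}), so admissibility combined with the turning-corners rule used to extend leftmost sections across nonseparated leaves forces any section of $\mathcal{C}^\ell$ through $z$ on the fiber $\partial_\infty\lambda^u$ to coincide with the canonical leftmost section $s^\ell_z$ on all of $\mathcal{L}^u$.
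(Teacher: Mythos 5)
Your overall strategy — use equivariance to reduce to fiber convergence $g^{-n}(x) \to z$ in $\partial_\infty\lambda^u$, then upgrade to convergence in $\mathcal{C}^\ell$ by showing $(\pi^\ell_{\lambda^u})^{-1}(z) = \{s^\ell_z\}$ and bracketing with special sections built on the small markers of \Cref{lem:small_markers} — is a reasonable reorganization, and you correctly identify where the difficulty lies. But the "hard part" is genuinely the entire content of the lemma, and the one-sentence sketch you give for it does not work. Admissibility alone cannot pin down a section of $\mathcal{C}^\ell$ passing through the nonmarker point $z$: by \Cref{lem:nonmarker} there is \emph{no} marker through $z$, so in any small cylinder around $\partial_\infty\lambda^u$ there is no local marker constraint at all, and two admissible sections can a priori diverge immediately (one heading left into the region between $s_{\textbf{nm}}$ and $m_U^\ell$, the other heading right). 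Likewise the turning-corners rule only speaks to how a section is extended across a cataclysm, not to how it behaves in the interior of a line $\ell^s \subseteq \mathcal{L}^u$. What actually forces uniqueness is the \emph{leftmost} rule combined with a fencing argument: applying \Cref{lem:small_markers} twice (once on each side of $s_{\textbf{nm}}$, as in the paper's proof) produces a crescent in $E_\infty|_U$, and any leftmost (or limit-of-leftmost) section entering the crescent is forced onto $s_{\textbf{nm}}$ at the leaves $\lambda^\ell_+$ and $\lambda^r_-$ where the markers reattach, hence agrees with $s^\ell_z$ outside $U$ among comparable leaves. The incomparable leaves are handled separately by the turning-corners observation, as in the paper.

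So the gap is not that your plan is wrong, but that proving the step you defer would essentially reproduce the paper's argument verbatim, at which point the reformulation in terms of gaps of $\pi^\ell_{\lambda^u}$ buys nothing (though it would, if completed, establish the cleaner and slightly stronger statement that $s^\ell_{p} \to s^\ell_z$ whenever $p \to z$ in the fiber, independent of the dynamics of $g$). I would also flag a minor imprecision: you assert that the short arc from $s^\ell_{y_U^\ell}$ to $s^\ell_{y_U^r}$ contains $s^\ell_{g^{-n}(x)}$ once $g^{-n}(x) \in (y_U^\ell, y_U^r)$ "by monotonicity," but monotonicity of $\pi^\ell_{\lambda^u}$ only places $s^\ell_{g^{-n}(x)}$ in $(\pi^\ell_{\lambda^u})^{-1}\bigl((y_U^\ell, y_U^r)\bigr)$, which a priori is larger than the arc between the two special sections if $y_U^\ell$ or $y_U^r$ have nontrivial gaps; you would also need to know those gaps lie on the correct sides. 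None of this is fatal, but it all needs to be said, and it is precisely what the paper's two-step pointwise argument handles directly.
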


\begin{proof}
	First we'll show that for all leaves $\nu$ incomparable with $\l^u$, we have \[s^\ell_{g^{-n}(x)}(\nu)=s^\ell_z(\nu)\] for all $n\geq 0$.
	
	This is immediate from the ``turning corners" rule for extending special sections. Let $\gamma$ be the minimal broken path in $\mathcal{L}^u$ from $\l^u$ to $\nu$. As $\l^u$ and $\nu$ are incomparable, $\gamma$ must pass through pairs of nonseparated leaves. Let $\mu_1$ and $\mu_2$ be the first such pair. Then while $s^\ell_{g^{-n}(x)}$ and $s^\ell_z$ may differ on $\mu_1$, the are both nonmarker on $\mu_2$, and from this point forward they agree as they travel along $\gamma$. See \Cref{fig:agree_after_turn}.
	
	\begin{figure}[h!]
		\centering
		\includegraphics[width=0.8\linewidth]{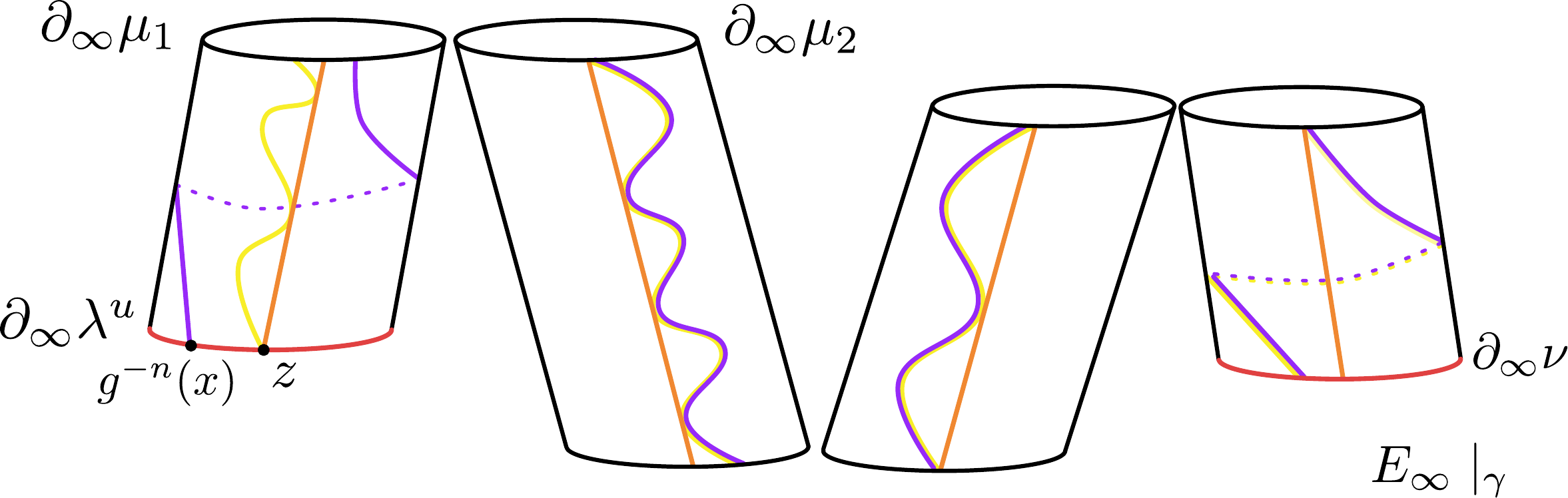}
		\caption{The sections $s^\ell_{g^{-n}(x)}$ (in purple) and $s^\ell_z$ (in yellow) agree after the first turn.}\label{fig:agree_after_turn}
	\end{figure}

	Now we'll show that for any neighborhood $U\subseteq\ell^s\subseteq\mathcal{L}^u$ of $\l^u$, there exists some $N\geq0$ such that for all $n\geq N$, \[s^\ell_{g^{-n}(x)}(\nu)=s^\ell_z(\nu)\] for all leaves $\nu$ comparable with $\l^u$ that are outside of $U$.
	
	To this end, first suppose that $x$ is to the left of $s_\textbf{nm}(\l^u)$, i.e. $(\Phi(p),x,s_\textbf{nm}(\l^u))$ is a counterclockwise oriented triple. Let $U$ be such a neighborhood. By \Cref{lem:small_markers}, there exists a marker $m_U^r$ contained in $E_\infty\mid_{U}$, intersecting $\partial_\infty\l^u$, and to the right of $s_\textbf{nm}$. As $m_U^r$ determines a subneighborhood $U'$ of $U$, we can apply \Cref{lem:small_markers} again to get a marker $m^\ell_{U'}$, this time on the left. \Cref{fig:inside_marker} illustrates the setup.
	
		\begin{figure}[h!]
		\centering
		\includegraphics[width=0.7\linewidth]{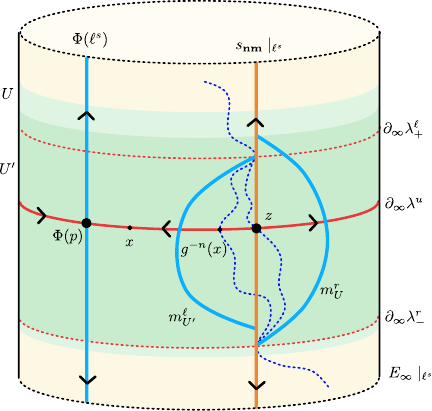}
		\caption{The setup for the proof of \Cref{lem:special_source}. The neighborhood $U$ is shaded in light green, and $U'$ is shaded in darker green. $N$ is chosen large enough so that $g^{-N}$ moves the point $x$ inside the crescent bound by $m^\ell_{U'}$ and $s_\textbf{nm}$. The markers $m_{U'}^\ell$ and $m^r_U$ ``fence in" special sections passing through the crescents.}\label{fig:inside_marker}
	\end{figure}
	
	By the dynamics of $g$ on $E_\infty\mid_{\ell^s}$, there exists a constant $N$ such that for $n\geq N$, $g^{-n}(x)$ is inside the crescent region bound by $m^\ell_{U'}$ and $s_\textbf{nm}$, as in \Cref{fig:inside_marker}. Then $m^\ell_{U'}$ and $m^r_U$ ``fence in" $s^\ell_{g^{-n}(x)}$ as follows.
	
	Let $\lambda^\ell_+$ be the leaf at the upper limit of $m^\ell_{U'}$, and let $\lambda^r_-$ be the leaf at the lower limit of $m^r_{U}$. As $s^\ell_{g^{-n}(x)}$ travels up from $g^{-n}(x)$, it can't cross the marker $m^\ell_{U'}$. Thus, it must agree with the nonmarker curve at $\lambda^\ell_+$. The same reasoning applies to $s^\ell_z$, so $s^\ell_{g^{-n}(x)}$ and $s^\ell_z$ agree on all leaves at and above $\lambda^\ell_+$.
	
	Similar reasoning shows that $s^\ell_{g^{-n}(x)}$ and $s^\ell_z$ agree on all leaves at and below $\lambda^r_-$. Thus, in the case that $x$ was to the left of the nonmarker curve, we've shown that $s^\ell_z$ and $s^\ell_{g^{-n}(x)}$ agree on all leaves comparable to $\l^u$ and outside of $U$.
	
	To handle the case that $x$ is to the right of the nonmarker curve, we essentially swap ``left" and ``right" in the above proof. First pick a marker $m^\ell_U$ on the left, and then pick a marker $m^r_{U'}$ on the right, where $U'$ is defined by $m^\ell_U$. Pick $N$ high enough so $g^{-N}$ moves $x$ into the crescent bound by $m^r_{U'}$ and the nonmarker curve, and repeat the fencing in argument.
	
	To finish the proof, let $\nu\in\mathcal{L}^u$ be any leaf. If $\nu$ and $\l^u$ are incomparable, we've shown that 
		\[s^\ell_{g^{-n}(x)}(\nu)=s^\ell_z(\nu)\] 
for all $n\geq 0$. If $\nu$ and $\ell^u$ are comparable, pick a small neighborhood $U\subseteq\ell^s\subseteq\mathcal{L}^u$ of $\l^u$ not containing $\nu$. Then we've shown that for high enough $n$,
	
	\[s^\ell_{g^{-n}(x)}(\nu)=s^\ell_z(\nu).\] 
	
	As we have pointwise eventual equality between $s^\ell_{g^{-n}(x)}$ and $s^\ell_z$, we have pointwise convergence, so 
	
	\[g^{-n}(s^\ell_x)\rightarrow s_z^\ell\text{ as }n\rightarrow\infty.\]
	\end{proof}

Next we show that $s^\ell_{\Phi(p)}$ is a sink, at least for special sections based on $\partial_\infty\l^u$. To do this, we need one more preliminary lemma. 

\begin{lemma}\label{lem:prop_emb}
	The line $\ell^s\subseteq\mathcal{L}^u$ is properly embedded.
\end{lemma}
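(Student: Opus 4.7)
The plan is to argue by contradiction. Suppose the map $\ell^s \to \mathcal{L}^u$ is not proper: after passing to a subsequence and possibly reversing orientation, there is a sequence $q_n \in \ell^s$ with $q_n \to (\ell^s)^+$ in $\overline{\orb}$ (the positive endpoint of $\ell^s$ on $\partial\orb$) such that $\ell^u(q_n) \to \lambda_\infty$ in $\mathcal{L}^u$ for some leaf $\lambda_\infty$. I will combine the structure of $\partial\orb$ near an endpoint of $\ell^s$ with the rigid dynamics of $g$---in particular the uniqueness of its fixed point in $\orb$---to derive a contradiction.

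First I will extract an accumulating leaf sharing an endpoint with $\ell^s$. Using compactness of $\overline\orb$, and the fact that there are only finitely many leaves nonseparated from $\lambda_\infty$ by \cite[Corollary F]{fenley1998structure}, I may pass to a further subsequence for which the closures $\overline{\ell^u(q_n)} \subset \overline\orb$ Hausdorff-converge to the closure of some unstable leaf $\ell^u_*$ that is either $\lambda_\infty$ or nonseparated from it. Since $q_n \in \ell^u(q_n)$ and $q_n \to (\ell^s)^+ \in \partial\orb$, the point $(\ell^s)^+$ must lie in the limit and hence be an endpoint of $\ell^u_*$. I claim $\ell^u_*$ makes a perfect fit with $\ell^s$ at $(\ell^s)^+$: if $\ell^u_*$ intersected $\ell^s$ transversally at some point in $\orb$, then $\ell^s$ would separate the two endpoints of $\ell^u_*$ into opposite components of $\partial\orb \ssm \{(\ell^s)^+,(\ell^s)^-\}$, contradicting $(\ell^u_*)^+ = (\ell^s)^+$. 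Hence $\ell^u_* \cap \ell^s = \emptyset$ and the shared endpoint yields a perfect fit.

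Finally I will rule out such a perfect fit using $g$'s action. The unstable leaves making perfect fits with $\ell^s$ at $(\ell^s)^+$ form a finite chain of pairwise nonseparated leaves (again by \cite[Corollary F]{fenley1998structure}). As $g$ fixes $\ell^s$ and $(\ell^s)^+$ and acts on this linearly ordered finite set by an orientation-preserving automorphism, $g$ must fix every element, in particular $g \cdot \ell^u_* = \ell^u_*$. Since $\ell^u_* \cap \ell^s = \emptyset$ while $\ell^u(p) \cap \ell^s = \{p\}$, we have $\ell^u_* \neq \ell^u(p)$, so $g$ lies in the nontrivial stabilizer of $\ell^u_*$ in $\pi_1(M)$. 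This stabilizer equals $\pi_1$ of the image weak unstable leaf $W^u_* \subset M$, which by atoroidality and orientability has at most cyclic fundamental group generated by a periodic orbit of $\varphi$. Thus $g$ is a nontrivial power of this periodic orbit element and shares its fixed point $p_* \in \ell^u_* \cap \orb$. Since $\ell^u_* \neq \ell^u(p)$ forces $p_* \neq p$, this contradicts the uniqueness of $g$'s fixed point in $\orb$.

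The main technical difficulty will be in the first step: carefully passing from leaf-space convergence in the non-Hausdorff manifold $\mathcal{L}^u$ to Hausdorff convergence of leaf closures in $\overline{\orb}$, which requires finiteness of nonseparated chains and care with the Fenley compactification. The third step also depends crucially on atoroidality, without which weak unstable leaves could have more complicated (noncyclic) fundamental group and the periodic orbit argument would not close.
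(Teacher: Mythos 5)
Your proof is correct and follows the same overall strategy as the paper's: derive a perfect fit between $\ell^s$ and some $g$-invariant unstable leaf from the failure of properness, then produce a second fixed point of $g$ in that leaf to contradict uniqueness. The paper's argument is extremely terse — it asserts without elaboration that the accumulation set of the $t_i$ meets $\partial\orb$ at an endpoint of $\ell^s$, that this forces a perfect fit with $t^u$ or a leaf nonseparated from it, and that $g$ would then ``fix a point in $t^u$.'' You fill in exactly the steps the paper leaves implicit: that $g$ fixes the perfect-fit partner because it is an orientation-preserving automorphism of a finite canonical chain, and that a nontrivial element of the stabilizer of a weak unstable leaf in an atoroidal manifold must be a power of the cyclic generator associated to the unique periodic orbit, hence shares its fixed point. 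This last piece of reasoning is almost certainly what the paper intends by ``$g$ would then also fix a point in $t^u$.'' The one place your writeup is slightly imprecise is the claim that $\overline{\ell^u(q_n)}$ Hausdorff-converges to the closure of a single leaf $\ell^u_*$; the accumulation set in $\overline{\orb}$ can contain several leaves of a nonseparated chain together with arcs of $\partial\orb$. But this does not affect the conclusion, since what you actually use — that $(\ell^s)^+$ is in the accumulation set and hence is a shared ideal endpoint with some member of the finite $g$-invariant chain, which yields a perfect fit by Fenley's structure theory — is correct.
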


\begin{proof}
	Suppose not, so there exists $$\tau^u\in\overline{\ell^s}\ssm\ell^s,$$ where the closure is taken in $\mathcal{L}^u$. Then $t^u=\Theta(\tau^u)$ is an unstable leaf not intersecting $\ell^s$, but that is a limit of unstable leaves $t_i$ that do intersect $\ell^s$. The accumulation set of the $t_i$ in $\ol \orb$ includes an endpoint of $\ell^s$ in $\partial \orb$ and so 
 $\ell^s$ makes a perfect fit with $t^u$ or a leaf nonseparated from $t^u$. However, $g$ would then also fix a point in $t^u$ and 
this contradicts the fact that $g$ acts on $\orb$ with exactly one fixed point.
\end{proof}

We are now ready to prove the following. The proof is similar to but slightly easier than the proof of \Cref{lem:special_source}.

\begin{lemma}\label{lem:special_sink}
		Let $x\in\partial_\infty\l^u\ssm\{z\}$. Then \[g^{n}(s^\ell_x)\rightarrow s_{\Phi(p)}^\ell\text{ as }n\rightarrow\infty.\]
\end{lemma}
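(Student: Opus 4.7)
The plan is to parallel the proof of Lemma 8.3 with forward iteration replacing backward and the roles of $z$ and $\Phi(p)$ swapped, using the fact that $\Phi(\ell^s)$ is itself a maximal marker through $\Phi(p)$ in the cylinder $E_\infty\mid_{\ell^s}$. This pre-existing marker provides one side of the fence needed for the trapping argument, so only a single small marker from Lemma~\ref{lem:small_markers} is required. This is the sense in which the sink case is easier than the source case.

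The argument for leaves $\nu\in\mc L^u$ incomparable with $\lambda^u$ is identical to Step~1 in the proof of Lemma~\ref{lem:special_source}: the turning corners rule forces any special section based at a point of $\partial_\infty\lambda^u$ to become the nonmarker point at the first pair of nonseparated leaves along the minimal broken path from $\lambda^u$ to $\nu$, so $s^\ell_{g^n(x)}(\nu)=s^\ell_{\Phi(p)}(\nu)$ for all $n\ge 0$.

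For comparable leaves, fix a neighborhood $U\subseteq \ell^s\subseteq\mc L^u$ of $\lambda^u$; the goal is to show that for $n$ sufficiently large, $s^\ell_{g^n(x)}$ and $s^\ell_{\Phi(p)}$ agree on every leaf comparable to $\lambda^u$ outside $U$. Since $\Phi(p)$ lies on the maximal marker $\Phi(\ell^s)$, the section $s^\ell_{\Phi(p)}$ coincides with $\Phi(\ell^s)$ throughout the cylinder $E_\infty\mid_{\ell^s}$. The point $x$ lies in one of the two arcs of $\partial_\infty\lambda^u$ cut out by $\{\Phi(p),z\}$, and the $g$-invariance of $\Phi(\ell^s)$ forces $g^n(x)$ to remain on that arc while converging to $\Phi(p)$. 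Using Lemma~\ref{lem:small_markers}, I would select a single small marker $m_U\subseteq E_\infty\mid_U$ on the opposite side of $s_{\mathbf{nm}}$ from $\Phi(\ell^s)\cap\partial_\infty\lambda^u$. For $n$ large, $g^n(x)$ is trapped in the crescent near $\Phi(p)$ bounded by $\Phi(\ell^s)$, $s_{\mathbf{nm}}$, and $m_U$. The leftmost-up and rightmost-down rules then force $s^\ell_{g^n(x)}$ to travel along $\Phi(\ell^s)$ (which it cannot cross) and to become nonmarker at the edge of $U$ (since it cannot cross $m_U$ either), thereby matching $s^\ell_{\Phi(p)}$ on every circle fiber outside $U$.

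Combining the two cases yields pointwise eventual equality on all of $\mc L^u$, and hence the desired pointwise convergence $g^n(s^\ell_x)\to s^\ell_{\Phi(p)}$ in $\mc C^\ell$. The main obstacle is a careful orientation analysis: for each of the two arcs on which $x$ might live (and the corresponding choice of small marker $m_U$), one must verify that the leftmost-up/rightmost-down convention directs the special section into the $\Phi(\ell^s)$-barrier rather than away from it, so that the trapping picture closes up. This parallels the two subcases in the proof of Lemma~\ref{lem:special_source}, where $x$ lies to the left or right of $s_{\mathbf{nm}}$, and is structurally identical in each subcase.
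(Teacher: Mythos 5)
Your plan parallels Lemma~\ref{lem:special_source} more closely than the paper does, and there is a genuine gap in the comparable-leaves case. You claim that for $n$ large, $s^\ell_{g^n(x)}$ and $s^\ell_{\Phi(p)}$ \emph{agree} on every comparable leaf outside $U$, obtaining pointwise eventual equality. But the situation is not symmetric with the source case: the target section $s^\ell_{\Phi(p)}$ coincides with the marker $\Phi(\ell^s)$ over all of $\ell^s \subseteq \mc L^u$, and a special section based at $g^n(x) \notin \Phi(\ell^s)$ can never cross the marker $\Phi(\ell^s)$, so $s^\ell_{g^n(x)}(\nu) \neq \Phi(\ell^s)\cap\partial_\infty\nu = s^\ell_{\Phi(p)}(\nu)$ for \emph{every} $n$ and every $\nu \in \ell^s$. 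In the source case (Lemma~\ref{lem:special_source}), the target $s^\ell_z$ runs along the nonmarker section, and fencing can squeeze $s^\ell_{g^{-n}(x)}$ exactly onto $s_{\mathbf{nm}}$ at the upper endpoint of the trapping marker, which is what produces eventual equality there. No such coincidence is available in the sink case. What one actually proves for $\nu \in \ell^s$ is only pointwise \emph{convergence}: the paper observes that since $g^n(x) \to \Phi(p)$, the maximal marker through $g^n(x)$ extends past $\partial_\infty\nu$ for $n$ large, the section is confined to the rectangle between that marker and $\Phi(\ell^s)$, and that rectangle is trivially foliated by markers shrinking onto $\Phi(\ell^s)$. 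The fencing machinery and Lemma~\ref{lem:small_markers} are not needed here at all.

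A secondary issue is the decomposition. You split $\mc L^u$ into leaves comparable with $\lambda^u$ and incomparable ones, but the paper splits into leaves in $\ell^s$ and leaves not in $\ell^s$, and uses Lemma~\ref{lem:prop_emb} (proper embeddedness of $\ell^s$ in $\mc L^u$) to guarantee that any leaf outside $\ell^s$ is reached only after a turn at a nonseparated pair, so that the turning-corners argument applies. Without invoking Lemma~\ref{lem:prop_emb}, your comparable/incomparable split does not obviously reduce to a regime where the cylinder $E_\infty\mid_{\ell^s}$ controls the section; the fencing/rectangle argument is confined to $E_\infty\mid_{\ell^s}$ and says nothing directly about comparable leaves that lie outside $\ell^s$. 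Finally, the claimed ``crescent near $\Phi(p)$ bounded by $\Phi(\ell^s)$, $s_{\mathbf{nm}}$, and $m_U$'' is geometrically off: $m_U$ hangs off $s_{\mathbf{nm}}$ near $z$, on the opposite end of $\partial_\infty\lambda^u$ from $\Phi(p)$, so these three curves do not bound a small region near $\Phi(p)$ in the cylinder.
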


\begin{proof}
	Let $x$ as in the statement of the lemma. First we'll show that for $\nu\in\ell^s\subseteq\mathcal{L}^u$, \[s^\ell_{g^n(x)}(\nu)\rightarrow s^\ell_{\Phi(p)}(\nu)\text{ as }n\rightarrow\infty.\]
	By the dynamics of $g$ on $E_\infty\mid_{\ell^s}$, for high enough $n$, $g^n(x)$ will lie on a marker $m$ that extends past $\partial_\infty\nu$. See \Cref{fig:special_sink}. The rectangular region $R$ bounded by $\partial_\infty\l^u$, $\partial_\infty\nu$, $m$, and $\Phi(\ell^s)$ is trivially foliated by markers, so we have pointwise convergence \[s^\ell_{g^n(x)}(\tau)\rightarrow \Phi(\ell^s)\cap \tau\] for all unstable leaves $\tau$ in this region, and in particular for $\nu$. As \[s^\ell_{\Phi(p)}\mid_{\ell^s}=\Phi(\ell^s)\mid_{\ell^s},\] we have handled this case.
	
	\begin{figure}[h!]
		\centering
		\includegraphics[width=0.5\linewidth]{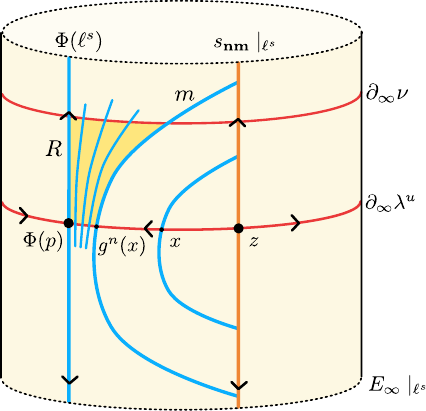}
		\caption{Under the action of $g$ on $E_\infty\mid_{\ell^s}$, markers converge to $\Phi(\ell^s)$. The rectangular region $R$ is highlighted in yellow.}\label{fig:special_sink}
	\end{figure}
	
	Now suppose $\nu\in\mathcal{L}^u\ssm\ell^s$. By \Cref{lem:prop_emb}, this means the minimal path $\gamma$ from $\l^u$ to $\nu$ passes through a leaf $\mu$ nonseparated from a leaf in $\ell^s\subseteq\mc L^u$. As in the proof of \Cref{lem:special_source}, the sections $s^\ell_{g^n(x)}$ and $s^\ell_{\Phi(p)}$ agree after the first turn, so for all such $\nu$ and for all $n$ we have \[s^\ell_{g^n(x)}(\nu)=s^\ell_{\Phi(p)}(\nu).\]
	
	Thus, as in the proof of \Cref{lem:special_source}, we have shown pointwise convergence at all $\nu\in\mathcal{L}^u$, which completes the proof.
\end{proof}

Putting \Cref{lem:special_source} and \Cref{lem:special_sink} together, we can show the following.

\begin{proposition}\label{prop:nconj_sink_source}
	For any section $s\in\mathcal{C}^\ell\ssm \{s^\ell_{\Phi(p)},s^\ell_z\}$, we have \[g^{-n}(s)\rightarrow s^\ell_z\text{ as }n\rightarrow\infty\] and 
	\[g^{n}(s)\rightarrow s^\ell_{\Phi(p)}\text{ as }n\rightarrow\infty.\]
	In particular, $g$ acts on $\mathcal{C}^\ell$ with exactly two fixed points, a sink and a source.
\end{proposition}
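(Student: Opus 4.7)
The plan is to reduce the general statement to Lemmas \ref{lem:special_source} and \ref{lem:special_sink} via a monotonicity argument on the circle $\mc C^\ell$. First, I would verify that $s^\ell_{\Phi(p)}$ and $s^\ell_z$ are both fixed by $g$: this is because the leftmost section construction is equivariant, i.e.\ $g \cdot s^\ell_q = s^\ell_{g \cdot q}$, and $g$ fixes both $\Phi(p)$ and $z$ in $E_\infty$. Since $g$ acts on $\mc C^\ell$ by an orientation-preserving homeomorphism, these two fixed points divide $\mc C^\ell$ into two open arcs $A$ and $B$, each $g$-invariant.

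Next I would show that $g$ has no additional fixed points on $\mc C^\ell$. The map $x \mapsto s^\ell_x$ is a circular-order-preserving embedding of $\partial_\infty \l^u$ into $\mc C^\ell$: indeed $s^\ell_x(\l^u) = x$, so this map is a section of the monotone map $\pi^\ell_{\l^u}\colon \mc C^\ell \to \partial_\infty \l^u$. Hence the two arcs of $\partial_\infty \l^u \ssm \{\Phi(p), z\}$ sit in distinct components of $\mc C^\ell \ssm \{s^\ell_z, s^\ell_{\Phi(p)}\}$, producing sections $s^\ell_{x_A} \in A$ and $s^\ell_{x_B} \in B$ with $x_A, x_B \in \partial_\infty \l^u$. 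By Lemmas \ref{lem:special_source} and \ref{lem:special_sink}, both orbits $\{g^n(s^\ell_{x_A})\}_{n \in \ZZ}$ and $\{g^n(s^\ell_{x_B})\}_{n \in \ZZ}$ accumulate at $s^\ell_z$ as $n \to -\infty$ and at $s^\ell_{\Phi(p)}$ as $n \to +\infty$. Any additional $g$-fixed point in $A$ (resp.\ $B$) would block one of these orbits from reaching one of its endpoints, a contradiction.

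Finally, with $s^\ell_z$ and $s^\ell_{\Phi(p)}$ as the only fixed points, the required sink--source dynamics follows from a sandwich argument. Given $s \in A$, the orbit of $s^\ell_{x_A}$ is monotone along $A$ (since no other fixed point interrupts it) and accumulates at both endpoints, so $s$ lies between $g^n(s^\ell_{x_A})$ and $g^{n+1}(s^\ell_{x_A})$ for some $n$. Applying $g^k$ and using invariance of $A$, the iterate $g^k(s)$ lies between $g^{n+k}(s^\ell_{x_A})$ and $g^{n+k+1}(s^\ell_{x_A})$. As $k \to \infty$ both bracketing iterates tend to $s^\ell_{\Phi(p)}$, forcing $g^k(s) \to s^\ell_{\Phi(p)}$; the case $k \to -\infty$ is analogous, and the argument in $B$ is identical with $s^\ell_{x_B}$ in place of $s^\ell_{x_A}$.

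I expect the most delicate point to be the verification that $x \mapsto s^\ell_x$ is order-preserving and lands in both arcs $A$ and $B$ in the required way; once this is clear, Lemmas \ref{lem:special_source} and \ref{lem:special_sink} supply the attracting/repelling data on each arc, and the monotonicity of an orientation-preserving circle homeomorphism handles the rest in a routine fashion.
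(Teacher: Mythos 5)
Your proof is correct and takes essentially the same route as the paper: reduce to the base leaf $\lambda^u$, pick one basepoint in each arc of $\partial_\infty \lambda^u \ssm \{z, \Phi(p)\}$, apply Lemmas \ref{lem:special_source} and \ref{lem:special_sink} to their orbits, and sandwich an arbitrary section between consecutive iterates. The one small structural difference is that you separately argue there are no additional fixed points before running the sandwich; in the paper this is an automatic byproduct of the sandwich argument (a hypothetical fixed point $s$ in an arc $A$ would, by your own step 4, be forced to equal an endpoint). Your extra care in checking equivariance of $q \mapsto s^\ell_q$ and that the map $x \mapsto s^\ell_x$ is order-preserving (being a section of $\pi^\ell_{\lambda^u}$) is sound and makes the argument self-contained, but it is the same proof.
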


\begin{proof}
	We follow the proof strategy described earlier. The space \[X=\mathcal{C}^\ell\ssm\{s^\ell_z,s^\ell_{\Phi(p)}\}\] has two components. Choosing $x_1$ and $x_2$ in opposite components of \[\partial_\infty\l^u\ssm\{z,\Phi(p)\},\] we have that $s^\ell_{x_1}$ and $s^\ell_{x_2}$ lie in different components of $X$. By \Cref{lem:special_source} and \Cref{lem:special_source}, we have that 
	\[g^{-n}(s^\ell_{x_i})\rightarrow s^\ell_z\text{ as }n\rightarrow\infty\] and 
	\[g^{n}(s^\ell_{x_i})\rightarrow s^\ell_{\Phi(p)}\text{ as }n\rightarrow\infty\] for $i=1,2$. Thus, any section $s\in\mathcal{C}^\ell$ is sandwiched between $g$-orbits of either $s^\ell_{x_1}$ or $s^\ell_{x_2}$, so we see that $s^\ell_z$ is a global source and $s^\ell_{\Phi(p)}$ is a global sink for the action of $g$ on $\mathcal{C}^\ell$.
\end{proof}

As $g$ acts on $\partial\orb$ with four fixed points, this proves \Cref{thm:not_conj}.

\bibliographystyle{alpha}
\bibliography{universal_circles.bbl}

\end{document}